\title{Linear Bounds of the Crosscap Number of Knots} 
\author{Rob McConkey}
\begin{document}

\maketitle

\newtheorem{innercustomgeneric}{\customgenericname}
\providecommand{\customgenericname}{}
\newcommand{\newcustomtheorem}[2]{%
  \newenvironment{#1}[1]
  {%
   \renewcommand\customgenericname{#2}%
   \renewcommand\theinnercustomgeneric{##1}%
   \innercustomgeneric
  } {\endinnercustomgeneric} }

\newcustomtheorem{customthm}{Theorem}
\newcustomtheorem{customlemma}{Lemma}
\newcustomtheorem{customprop}{Proposition}
\newcustomtheorem{customconjecture}{Conjecture}
\newcustomtheorem{customcor}{Corollary}
\newcommand{\Q}{{\mathbb{Q}}}
\newcommand{\R}{{\mathbb{R}}}
\newcommand{\Z}{{\mathbb{Z}}}
\newcommand{\N}{{\mathbb{N}}}
\newcommand{\C}{{\mathbb{C}}}

\theoremstyle{plain}
\newtheorem*{ack*}{Acknowledgements}
\newtheorem{theorem}{Theorem}[section]
\newtheorem{lemma}[theorem]{Lemma}
\newtheorem{prop}[theorem]{Proposition}
\newtheorem{cor}[theorem]{Corollary}
\newtheorem{predefinition}[theorem]{Definition}
\newtheorem{conjecture}[theorem]{Conjecture}
\newtheorem{preremark}[theorem]{Remark}
\newenvironment{remark}%
 {\begin{preremark}\upshape}{\end{preremark}} \newenvironment{definition}%
  {\begin{predefinition}\upshape}{\end{predefinition}}

\newtheorem{ex}[theorem]{Example}
\newtheorem{ques}[theorem]{Question}

\makeatletter
\newtheorem*{rep@theorem}{\rep@title}
\newcommand{\newreptheorem}[2]{%
\newenvironment{rep#1}[1]{%
 \def\rep@title{#2 \ref{##1}}%
 \begin{rep@theorem}}%
 {\end{rep@theorem}}}
\makeatother

 \bibliographystyle{plain}
\newreptheorem{theorem}{Theorem}
\begin{abstract}
    Kalfagianni and Lee found two-sided bounds for the crosscap number of an alternating link in terms of certain coefficients of the Jones polynomial. We show here that we can find similar two-sided bounds for the crosscap number of Conway sums of strongly alternating tangles. Then we find families of links for which these coefficients of the Jones polynomial and the crosscap number grow independently. These families will enable us to show that neither linear bound generalizes for all links. 
\end{abstract}

\section{Introduction}
In \cite{LEE} Kalfagianni and Lee show that the crosscap number of an alternating link admits two-sided linear bounds in terms of certain coefficients of the Jones polynomial of the link. The purpose of this paper is twofold; we first generalize the result of \cite{LEE} for links that are the Conway sum of strongly alternating tangles. Second we construct families of knots obstructing the generalization of the result of \cite{LEE} to arbitrary knots. \\
\indent
For a link $L$ let
$$V(L) = \alpha_L t^n + \beta_L t^{n-1} + \dots + \beta_L' t^{m+1} + \alpha_L' t^m$$
be the Jones polynomial, and let $T_L = |\beta_L| + |\beta_L'|$.
\begin{definition}
    For a non-orientable connected surface, $S$, bounded by a link $L$, the \textit{crosscap number} is defined to be $$C(S) = 2 - \chi(S) - k_L,$$ where $k_L$ is the number of components of $L$. Then the \textit{crosscap number} for a link $L$, denoted $C(L)$ will be the minimum crosscap number of all non-orientable surfaces bounded by the link.
\end{definition}

 \indent
 In the first part of the paper we will find two sided linear bounds for $C(L)$, where $L$ is a Conway sum of tangles in terms of $T_L$. We also define many of the necessary terms for Theorem~\ref{cor:TLTwist} in that section, which we state here.

 \begin{theorem}
 \label{cor:TLTwist}
       Let $T_1$ and $T_2$ be non-splittable, twist reduced, strongly alternating tangles whose Conway sum is a link $L$. Let $C(L)$ be the crosscap number of $L$ and $k_L$ be the number of components of $L$. Then
 $$\left\lceil\frac{T_L}{6} \right\rceil - k_L \leq C(L) \leq 2T_L + k_L + 8.$$
 \end{theorem}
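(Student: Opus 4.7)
The plan is to bound both sides of the inequality by the twist number $t(D)$ of a standard Conway sum diagram $D$ obtained by placing reduced alternating diagrams of $T_1$ and $T_2$ into the Conway sum. The first order of business is to convert between $T_L$ and $t(D)$: using Kauffman-state surface arguments, together with the strong-alternation hypothesis on each tangle, one expects inequalities
\[
a_1 \, t(D) - a_2 \;\le\; T_L \;\le\; a_3 \, t(D) + a_4
\]
for explicit small constants $a_i$. Much of this translation should already be available from the earlier sections of the paper, where the Jones coefficients $\beta_L, \beta_L'$ are identified with combinatorial data of the checkerboard/state graphs of $D$.

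For the upper bound I would construct a concrete non-orientable spanning surface $S$ for $L$ from $D$. In each twist region one can replace the parallel strands by a single band carrying an appropriate number of crosscaps, and then complete this to a non-orientable checkerboard-type surface on the rest of $D$. A careful Euler characteristic count gives $-\chi(S) \le 2\, t(D) + O(1)$, so the definition $C(S) = 2 - \chi(S) - k_L$ yields $C(L) \le 2\, t(D) + k_L + O(1)$. Substituting $T_L \le a_3\, t(D) + a_4$ and absorbing the constants into the additive $+8$ produces the stated upper bound; the contribution of two four-ended tangles and of orientability corrections is what accounts for that $8$.

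For the lower bound the idea is to pass from a crosscap-minimizing non-orientable surface $S$ bounded by $L$ to an essential surface in $S^3 \setminus L$ by resolving compressions and boundary-compressions, while tracking non-orientability. Since each $T_i$ is non-splittable, twist reduced and strongly alternating, the exterior of $L$ has the structural features needed to run the Lackenby / Futer--Kalfagianni--Purcell guts machinery: one gets a linear estimate of the form $-\chi(S) \ge c\, t(D) - c'$ via Gauss--Bonnet applied to the guts of the complement cut along $S$. Combined with the lower inequality $a_1\, t(D) - a_2 \le T_L$ and the conversion $C(L) = 2 - \chi(S) - k_L$, optimizing constants should produce precisely the factor of $1/6$ appearing in $\lceil T_L/6 \rceil - k_L \le C(L)$, with the $-k_L$ coming directly from the crosscap number definition.

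The principal obstacle will be the lower bound. The delicate points are ensuring that the minimization argument passes from a non-orientable spanning surface of minimum crosscap number to an essential surface suitable for guts estimates without destroying the non-orientability (in particular, handling possible compressions through M\"obius bands), and checking that the essential four-punctured spheres arising from the Conway decomposition can be made simultaneously disjoint from $S$ so that the guts estimate can be applied tangle by tangle. Tracking the constants carefully enough to land at exactly $1/6$ rather than a worse fraction will require using the strong-alternation hypothesis in its full strength.
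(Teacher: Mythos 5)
Your high-level skeleton --- compare both $C(L)$ and $T_L$ to the twist number of the Conway-sum diagram and then combine --- is in fact the paper's strategy: the theorem follows by combining the twist-number bounds $\lceil tw(D(L))/3\rceil - k_L \le C(L) \le tw(D(L)) + 4 - k_L$ with the comparison $tw(D(L))/2 - k_L - 2 \le T_L \le 2\,tw(D(L))$. But both halves of your sketch have concrete problems. First, a directional error: to turn $C(L) \le f(tw)$ into $C(L)\le g(T_L)$ you must bound $tw$ \emph{above} in terms of $T_L$, which comes from the \emph{lower} inequality $tw/2 - k_L - 2 \le T_L$, not from $T_L \le a_3\, t(D) + a_4$ as you write; symmetrically, the lower bound on $C(L)$ needs $tw \ge T_L/2$, i.e.\ the \emph{upper} inequality $T_L \le 2\,tw$. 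Even with the directions fixed, your intermediate bound $C(L) \le 2\,t(D) + k_L + O(1)$ is too weak: combined with $tw \le 2T_L + 2k_L + 4$ it gives roughly $4T_L + 5k_L + O(1)$, not $2T_L + k_L + 8$. The paper needs and proves the sharper $C(L) \le tw(D(L)) + 4 - k_L$, obtained not by a direct diagrammatic construction on $L$ but by banding together crosscap-realizing surfaces of the numerator (or denominator) closures of the two tangles and invoking the Kalfagianni--Lee bound for those alternating closures.

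Second, and more seriously, your lower bound substitutes guts/Gauss--Bonnet machinery for the paper's actual mechanism, and you yourself flag the step that machinery cannot currently deliver. The paper never runs a guts estimate on $L$: it cuts a crosscap-realizing surface $S$ along the Conway sphere $\Sigma$, uses technical lemmas (connectedness of spanning surfaces of non-splittable alternating links following Adams--Kindred, and elimination or control of closed curves of $S\cap\Sigma$) to arrange that the pieces are connected spanning surfaces of the alternating closures $K_1, K_2$, and then applies the already-established alternating-link bound $\lceil tw/3\rceil + 2 - k \le C$ to each piece, reassembling with controlled Euler-characteristic and component-count corrections. That reduction to the alternating case is the substance of the argument; without it you would have to prove from scratch a linear estimate $-\chi(S)\gtrsim tw/3$ for an arbitrary crosscap-minimizing non-orientable surface of a non-alternating link, including passing to an essential surface without losing non-orientability and handling compressions through M\"obius bands --- exactly the points you defer. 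As written, the proposal does not close this gap.
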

 A key ingredient in the proof of Theorem~\ref{cor:TLTwist}, is Theorem~\ref{thm:CrossMain} which we state below. Theorem~\ref{thm:CrossMain} gives us bounds for $C(L)$ in terms of the crosscap numbers of the closures of the tangles which sum to $L$.
 \begin{theorem}
 \label{thm:CrossMain}
     Let $T_1$ and $T_2$ be non-splittable, twist reduced, strongly alternating tangles, and let $L$ be the link formed by the Conway sum of $T_1$ and $T_2$. Let $K_{iN}$ and $K_{iD}$ be the link formed by the numerator closure and denominator closures of $T_i$ respectively, $i \in \{1,2\}$. We have 
$$m - 2 \leq C(L) \leq m + 2 $$
    where $m = \text{min\{} C(K_{1N}) + C(K_{2N}), C(K_{1D}) + C(K_{2D})\} $.
 \end{theorem}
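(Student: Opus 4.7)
The plan is to prove the two inequalities separately; the upper bound is a direct construction, while the lower bound is a cut-and-paste argument on an optimal spanning surface.

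For the upper bound $C(L)\le m+2$, assume without loss of generality that $m=C(K_{1N})+C(K_{2N})$ (the denominator case is symmetric). Choose non-orientable spanning surfaces $S_1$ and $S_2$ realizing the crosscap numbers of $K_{1N}$ and $K_{2N}$. The numerator closures differ from the tangles $T_i$ only by the two closing arcs at the top and bottom of each $T_i$. Forming the Conway sum redistributes these arcs: the closure arcs of $K_{1N}$ and $K_{2N}$ can be cut and re-routed, with the surfaces $S_i$ modified by a bounded number of band attachments or tubings along disk neighborhoods of the endpoints of the tangles, to produce a single surface spanning $L$. Each such surgery changes the Euler characteristic by a controlled amount, and I will show that the total crosscap cost is at most $2$; combined with the correction coming from the possible change in the number of link components (which already appears in the definition of $C(\cdot)$), this gives $C(L)\le C(K_{1N})+C(K_{2N})+2$, and symmetrically with the denominator closures.

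For the lower bound $C(L)\ge m-2$, let $S$ be a non-orientable spanning surface for $L$ with $C(S)=C(L)$, and let $F$ be the Conway sphere separating the two tangles in $L$. Using the strong alternation hypothesis on $T_1$ and $T_2$ (which should give the necessary incompressibility and $\partial$-incompressibility to invoke standard intersection-minimization), I will isotope $S$ so that $S\cap F$ consists only of essential arcs whose endpoints lie on the four strands of $L$ meeting $F$. Combinatorially there are only a few configurations for how these arcs can pair up the four points of $L\cap F$, and each configuration corresponds, after cutting $S$ along $F$ and capping off with disks in $F$, to spanning surfaces for either the numerator closures $K_{1N},K_{2N}$ or the denominator closures $K_{1D},K_{2D}$ of the two tangles. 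Cutting along $F$ and capping changes the total Euler characteristic by at most $2$ (one unit per capped disk), so $C(K_{1N})+C(K_{2N})\le C(L)+2$ or $C(K_{1D})+C(K_{2D})\le C(L)+2$, and in either case $m\le C(L)+2$.

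The main obstacle is the lower bound, and specifically the step of arranging $S$ so that $S\cap F$ is a collection of essential arcs between the punctures and nothing else. Ruling out closed curves of intersection and inessential arcs is where the strongly alternating, non-splittable, twist-reduced hypotheses on the tangles must do their work, presumably via an incompressibility result for the two tangle complements (of the type developed by Lackenby, Futer--Kalfagianni--Purcell for Conway tangle decompositions). Once the intersection is reduced to such arcs, the rest of the argument is combinatorial bookkeeping of how the arc pattern on $F$ determines which closure the cut pieces span and how much Euler characteristic is gained by capping.
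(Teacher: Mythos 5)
Your overall architecture matches the paper's: upper bound by banding together crosscap-realizing surfaces for the closures, lower bound by cutting an optimal surface for $L$ along the Conway sphere and capping off. The upper bound sketch is essentially the paper's argument, though the paper is careful \emph{not} to cut $S_1$ and $S_2$ at all --- it only attaches two bands, precisely so that non-orientability is automatically preserved; your ``cut and re-route'' phrasing risks losing that property. The lower bound, however, has genuine gaps. First, after cutting $S$ along $F$ and capping, the two pieces $S_1,S_2$ may be \emph{orientable}, in which case they are not competitors for $C(K_1)$ and $C(K_2)$ at all; one must add a half-twist band to each orientable piece, costing $1$ in Euler characteristic per piece, and this correction (at most $2$) is exactly the source of the $-2$ in the lower bound. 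Your accounting attributes the slack of $2$ to ``one unit per capped disk,'' which is the wrong source: the two arc-cuts exactly cancel against the extra $+2$ in $C(S_1)+C(S_2)=4-\chi(S_1)-\chi(S_2)-k_1-k_2$ together with $k_1+k_2\ge k_L$, and capping closed intersection curves only \emph{improves} the inequality. Without the orientability correction your derivation would yield the unjustified stronger inequality $C(K_1)+C(K_2)\le C(L)$, so the step where the $2$ actually enters is missing.

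Second, your plan to eliminate all closed curves of $S\cap F$ by incompressibility is not available as stated: a crosscap-realizing surface need not be incompressible (compressing it can disconnect it or make it orientable), and the paper's isotopy lemma only removes closed curves that fail to separate the two arcs on the sphere; a closed curve separating the two arcs may survive, and it is handled in the Euler-characteristic bookkeeping (where it only helps the bound) rather than removed. Relatedly, the hypotheses ``non-splittable, strongly alternating'' are used in the paper's lower bound chiefly to guarantee, via the Adams--Kindred crossing-band argument, that \emph{every} spanning surface of the alternating closures $K_1,K_2$ is connected --- without connectivity the cut pieces do not bound $C(K_i)$ from above. You correctly identify the normalization of $S\cap F$ as the hard step, but the tool you propose (incompressibility of the tangle complements) is not the one that makes the argument close, and the orientability and connectivity issues, which are where the hypotheses and the additive constant actually do their work, are not addressed.
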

 Having Theorem~\ref{thm:CrossMain} at hand we use a result of \cite{LEE} and the additivity of twist numbers for strongly alternating tangles to find bounds for $C(L)$ in terms of the twist number of $L$. Then using these bounds and a generalization of Theorem 1.6 from \cite{Twist} by Futer, Kalfagianni, and Purcell gives us Theorem~\ref{cor:TLTwist}.\\
 \indent
 In section 5 we show that for arbitrary knots the crosscap number and $T_L$ are independent. Specifically we show:
 \begin{theorem}
 \label{Thm:nongen}
     We have the following;
    \begin{enumerate}[(a)]
        \item There exists a family of links for which $T_L \leq 2$, but $C(L)$ is arbitrarily large.
        \item There exists a family of links for which $C(L) \leq 3$, but $T_L$ is arbitrarily large. 
    \end{enumerate}
 \end{theorem}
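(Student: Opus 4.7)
The plan is to exhibit explicit families witnessing each part.

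For part~(a), I will use the torus knots $T(3,q)$ with $q \ge 4$ coprime to $3$. Substituting $p=3$ into Jones' closed form
$$V(T(p,q)) = \frac{t^{(p-1)(q-1)/2}\bigl(1 - t^{p+1} - t^{q+1} + t^{p+q}\bigr)}{1 - t^2}$$
collapses the numerator against $1-t^2$ and leaves the three-term polynomial $V(T(3,q)) = t^{q-1} + t^{q+1} - t^{2q}$. In the notation of the paper the second-extremal coefficients $\beta_{T(3,q)}$ (coefficient of $t^{2q-1}$) and $\beta'_{T(3,q)}$ (coefficient of $t^{q}$) both vanish, so $T_{T(3,q)} = 0 \le 2$ for every admissible $q$. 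For the crosscap unboundedness I would invoke Teragaito's computation of the non-orientable genus of torus knots, which yields $C(T(3,q)) \to \infty$ as $q \to \infty$, giving the required family for part~(a).

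For part~(b), I will build a twist family all of whose members span a non-orientable surface of uniformly bounded crosscap while their Jones polynomials grow in complexity. Fix a knot $K_0 \subset S^3$ spanned by a non-orientable surface $S_0$ of crosscap $3$, together with an unknotted simple closed curve $\gamma \subset S_0 \subset S^3$ whose tubular neighborhood in $S_0$ is an annulus. Let $K_n$ be the knot obtained from $K_0$ by $n$ integer twists along a disk bounded by $\gamma$, and let $S_n$ be the image of $S_0$ under the same twist; $S_n$ is a crosscap-$3$ non-orientable surface with boundary $K_n$, so $C(K_n) \le 3$ uniformly in $n$. To force $T_{K_n} \to \infty$, I would iterate the Kauffman bracket skein relation at the twist site: this writes $\langle K_n \rangle$ as an explicit $n$-dependent combination of the brackets of the two local resolutions, and tracking the top two and bottom two $A$-degrees shows that $|\beta_{K_n}|$ and $|\beta'_{K_n}|$ each grow linearly in $n$, provided no cancellation takes place between the Kauffman states contributing to the second-extremal degrees.

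The main obstacle will be this last non-cancellation step, since the second-extremal coefficients of the Jones polynomial are precisely where delicate cancellations among Kauffman states tend to occur. I would handle it by pinning $K_0$, $S_0$, and $\gamma$ down as a small explicit model — for instance a non-alternating pretzel or Montesinos knot whose crosscap is forced to be $3$ by a known lower bound — verifying the base cases $n = 0, 1, 2$ of the skein expansion by direct computation, and then propagating linear growth of $|\beta_{K_n}| + |\beta'_{K_n}|$ to all $n$ through the skein recursion. This completes both parts of the theorem.
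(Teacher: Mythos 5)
Your part (a) is sound and is essentially the paper's own strategy: torus knots, Teragaito's crosscap formula, and the closed form for $V(T(p,q))$. The computation $V(T(3,q)) = t^{q-1}+t^{q+1}-t^{2q}$ is correct, so $T_{T(3,q)}=0$ for $q\geq 4$, and for even $q$ coprime to $3$ the continued-fraction computation gives $N(q,3)\approx q/4\to\infty$ (indeed your family with $q\equiv 2 \pmod 6$ is literally the paper's family $T(2+2jk,j)$ with $j=3$). You should restrict to even $q$, since for odd $q$ Teragaito's odd case requires the messier quantity $N(3q\pm 1, 9)$, and you should actually carry out the continued-fraction expansion of $q/3$ rather than just ``invoking'' the result, but that is routine.

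Part (b) has a genuine gap, and it is exactly the one you flag: the non-cancellation of Kauffman states in the second-extremal degrees. This is not a technical loose end to be patched by checking $n=0,1,2$ and ``propagating''; it is the entire content of the lower bound on $T_{K_n}$, and for the construction you propose it will typically \emph{fail}. Twisting along a single curve $\gamma$ concentrates all the new crossings in one twist region, and the head and tail of the Jones polynomial are known to stabilize under such twisting: for an adequate twist family the extra crossings contribute only parallel edges to the state graph, which are identified in the reduced graph and add nothing to $\beta$ or $\beta'$ (compare the alternating case, where $T_L$ equals the twist number and is therefore \emph{constant} along your family). So $|\beta_{K_n}|+|\beta'_{K_n}|$ generally does not grow linearly in $n$; it is bounded. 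What is needed is a structural reason that each increment of the family contributes a new, non-cancelling unit to a second-extremal coefficient. The paper achieves this by taking negative Whitehead doubles of the connect sum of $n$ trefoils: B-adequacy of the doubled diagram is established directly, Stoimenow's formula $|\beta'|=b(G_B')$ converts the problem into counting independent cycles of the reduced all-B state graph, and each additional trefoil summand adds one such cycle, giving $|\beta'|\geq n$ while Clark's bound $C\leq 2g+1$ with $g=1$ keeps the crosscap number at most $3$. If you want to salvage your twist-family idea, you would need to replace ``no cancellation'' with an adequacy hypothesis (or an equivalent state-graph argument) and arrange the extra crossings in \emph{new} twist regions or new cycles rather than deepening a single one.
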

 To show part (a) of Theorem~\ref{Thm:nongen}, we use work by Teragaito~\cite{TorusCC} to find a family of torus knots $T(p,q)$ where $C(T(p,q))$ grows with $q$ and $q$ can be made arbitrarily large. On the other hand we will show that $T_{T(p,q)} \leq 2$. \\
 \indent
 For part (b) we will introduce a family of Whitehead doubles for which the crosscap number is always bounded by 3 but $|\beta_W'|$ can be made arbitrarily large. Work by Clark~\cite{Clark} shows that for all links $C(L) \leq 2g(L) + 1$ which shows that for all Whitehead doubles, $C(W) \leq 3$. On the other hand, using work by Stoimenow~\cite{Stoimenow}, we are able to compute $|\beta'_W|$ for B-adequate links. Then we find a family of B-adequate Whitehead doubles for which $|\beta_W'|$ can be made arbitrarily large. \\
 \indent 
 We note that all the links constructed in Theorem~\ref{Thm:nongen} are non-hyperbolic. This leaves the question of whether Theorem~\ref{cor:TLTwist} may be generalized to all hyperbolic links. See Section 6 for more detail. 
 
\begin{ack*}
    The author thanks their advisor Efstratia Kalfagianni for guidance, helpful discussions, and comments on earlier drafts. Part of this research was supported in the form of graduate Research Assistantships by NSF grants DMS-2004155 and DMS-2304033 and funding from the NSF/RTG grant DMS-2135960.
\end{ack*}

\section{Crosscap Bounds on Connection of Two Strongly Alternating Tangles}
In this section we will work to prove Theorem~\ref{thm:CrossMain}.
\subsection{Preliminaries and the Upper bound of Theorem~\ref{thm:CrossMain}}
 We start with a couple definitions.  
 \begin{definition}
     A \textit{tangle} is a graph in the plane contained within a box which intersects the box at the four corners with one-valent vertices, with all other vertices, contained inside the box, four-valent, and given over/under crossing data. We label the four, 1-valent vertices NW, NE, SE, SW, positioned according to Figure~\ref{fig:numden}.
 \end{definition}
 \begin{definition}
      The \textit{closure} of a tangle is the link which results when we connect the NW and NE points along the box and SW and SE points along the box as seen in the center panel of Figure~\ref{fig:numden}, this is called the \textit{numerator closure}. If we close as in the right hand panel of Figure~\ref{fig:numden} we call it the \textit{denominator closure}. A tangle is \textit{strongly alternating} if both closures are prime and alternating. 
      \begin{figure}[h]
          \centering
          \includegraphics[width=9cm]{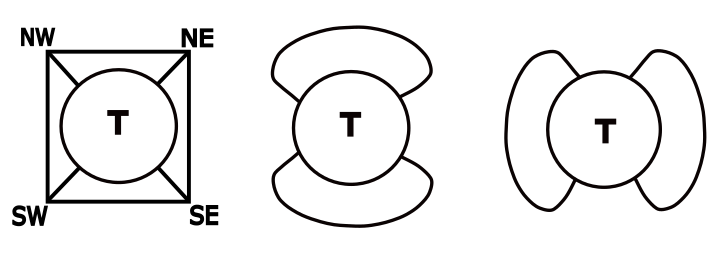}
          \caption{Left: A tangle inside a box with directional strands labelled, Center: numerator closure, Right: denominator closure.}
          \label{fig:numden}
      \end{figure}
 \end{definition}
 \begin{definition}
    A \textit{Conway sphere} is a 2-sphere which intersects a knot or link transversely in four points. A \textit{Conway sum} is a sum of tangles as shown in Figure~\ref{fig:LargeTangle}. For our purposes, a Conway sphere $\Sigma$ will be positioned such that it intersects a Conway sum at the four one valent vertices for one of the tangles in the sum. Notice if we let $S$ be a spanning surface for our Conway sum, then $S \cap \Sigma$ will contain two arcs and a possibly empty collection of simple closed curves.
\end{definition}
\begin{figure}[h]
    \center
    \includegraphics[width=8cm]{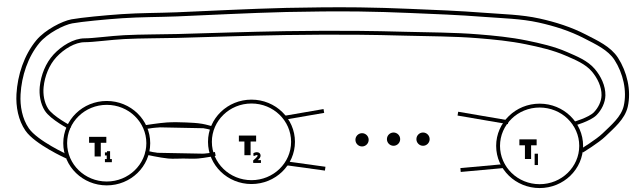}
    \caption{An example of a Conway sum of $l$ tangles.  }
    \label{fig:LargeTangle}
\end{figure}
\begin{figure}[h]
    \centering
    \includegraphics[width=5cm]{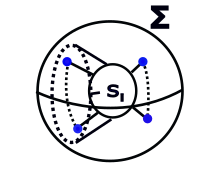}
    \caption{Here we see a tangle contained within a Conway Sphere. The blue dots represent the intersection of $T_i$ with $\Sigma$. Then the dotted lines are the intersections of $S$ with $\Sigma$.}
    \label{fig:ConwaySph}
\end{figure}
We are now ready to begin proving Theorem~\ref{thm:CrossMain}. We separate it into the upper and lower bounds, beginning with the upper bound: 

\begin{lemma}
\label{lem:upperbd}
Let $T_1$ and $T_2$ be a pair of non-splittable, strongly alternating tangles. Let $L$ be the link formed by the Conway sum of $T_1$ and $T_2$. Let $K_{iN}$ and $K_{iD}$ be the numerator and denominator closures, respectively, of $T_1$ and $T_2$. If $C(L)$ is the crosscap number of $L$, then
$$C(L) \leq \text{min\{} C(K_{1N}) + C(K_{2N}) + 2, C(K_{1D}) + C(K_{2D}) + 2\}.$$ 
\end{lemma}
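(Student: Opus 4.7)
The strategy is to construct an explicit non-orientable spanning surface $S$ for $L$ out of minimum-crosscap spanning surfaces for the closures of the two tangles, then read off the crosscap bound via $C(\cdot) = 2 - \chi(\cdot) - k(\cdot)$. The statement's minimum comes from running the construction twice: once with the numerator closures $K_{iN}$ and once with the denominator closures $K_{iD}$. I describe only the numerator case; the denominator case is formally identical.

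I would first fix non-orientable spanning surfaces $S_{iN}$ of $K_{iN}$ with $C(S_{iN}) = C(K_{iN})$, and let $B_i$ denote a 3-ball containing $T_i$. Using standard innermost-disk and outermost-arc arguments together with the hypothesis that $K_{iN}$ is prime and alternating, I would modify $S_{iN}$ (without increasing its crosscap number) so that $S_{iN} \cap \partial B_i$ consists of exactly the two arcs prescribed by the numerator closure, with no simple closed curves of intersection, and so that the portion of $S_{iN}$ outside $B_i$ is a pair of disks. I would additionally arrange an orientation-reversing loop of $S_{iN}$ to lie entirely inside $B_i$, so that the tangle-interior piece $S_i := S_{iN} \cap B_i$ is non-orientable and satisfies $\chi(S_i) = \chi(S_{iN})$.

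Next, I would assemble $S$ by placing $S_1$ and $S_2$ in their positions inside the Conway sum $L$ and completing them with a standard spanning surface for the arcs of $L$ lying outside $B_1 \cup B_2$. The exterior of $B_1 \cup B_2$ in $S^3$ is a handlebody-type region in which $L$ consists of a small collection of arcs among the eight marked points on $\partial B_1 \cup \partial B_2$; these arcs bound a controlled union of disks, and attaching them to $S_1 \cup S_2$ yields a properly embedded, non-orientable surface $S$ with $\partial S = L$. Euler characteristic additivity, accounting for the two arc-gluings on each $\partial B_i$, gives $\chi(S) \geq \chi(S_{1N}) + \chi(S_{2N}) - 2$.

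Applying $C(S) = 2 - \chi(S) - k_L$ and subtracting the analogous identities for $S_{1N}$ and $S_{2N}$ yields
\[
C(S) \leq C(K_{1N}) + C(K_{2N}) + (k_{1N} + k_{2N} - k_L).
\]
Since the construction may be viewed as performing two band attachments to the split surface $S_{1N} \sqcup S_{2N}$ of $K_{1N} \sqcup K_{2N}$, and each band changes the number of link components by at most one, we have $k_{1N} + k_{2N} - k_L \leq 2$, which yields $C(L) \leq C(S) \leq C(K_{1N}) + C(K_{2N}) + 2$. The main obstacle I anticipate is the careful arrangement of $S_{iN} \cap \partial B_i$ and of the orientation-reversing loop inside $B_i$; both rest on incompressibility properties of spanning surfaces derived from the strong-alternation hypothesis on the tangles.
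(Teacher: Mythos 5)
Your proposal is correct and follows essentially the same route as the paper: build a non-orientable spanning surface for $L$ by joining crosscap-realizing surfaces of the two numerator (resp.\ denominator) closures with two bands, then compare Euler characteristics and link-component counts, the component-count change of at most $2$ producing the $+2$. The only real difference is that you first normalize $S_{iN}\cap\partial B_i$ to two arcs with disk exteriors --- a delicate step (and the one place you invoke primality and alternation) that the paper avoids entirely by attaching the two bands directly to $S_{1N}\sqcup S_{2N}$ along the closure strands, which is why the paper can remark that this upper bound holds for arbitrary tangles.
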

\begin{proof}
Start with a pair of strongly alternating tangles, $T_1$ and $T_2$. Let $K_{1N}$ and $K_{2N}$ be the links acquired by the numerator closures of the tangles. Let $S_1$ and $S_2$ be non-orientable spanning surfaces which realize the crosscap numbers of $K_{1N}$ and $K_{2N}$ respectively. We find a spanning surface $S$ of $L$ by attaching $S_1$ and $S_2$ with a pair of bands bounded by the strands along which the Conway sum was taken. Notice that as we do not cut $S_1$ and $S_2$, $S$ will also be non-orientable.  \\
\indent
 Now we study the relationship between $C(S)$ and the sum of $C(S_1)$ and $C(S_2)$. We remind the reader that $C(S) = 2- \chi(S) - k $. The difference between $S$ and the disjoint union of $S_1$ and $S_2$ is the two connecting bands used to construct $S$. So, $\chi(S) =\chi(S_1) + \chi(S_2) - 2 $.  \\
\indent
Next we compare the number of link components in $L$ with the total in $K_{1N}$ and $K_{2N}$. The gluing of the East strands of $K_{1N}$ to the West strands of $K_{2N}$ will reduce the number of components by 1 as we are connecting two disjoint links. The other attachment can  increase or decrease the number of components by 1, or keep it the same. Then $k_L = k_{K_{1N}} + k_{K_{2N}} - \epsilon$ where $\epsilon = 0,1,\text{or }2$.\\
\indent 
Now we substitute for $\chi(S)$ and $k_L$ to find:
$$C(S) = 2 - \chi(S_1) - \chi(S_2) + 2  - k_{K_{1N}} - k_{K_{2N}} + \epsilon = C(K_{1N}) + C(K_{2N}) + \epsilon.$$
Hence: 
$$C(L) \leq C(S) =  C(K_{1N}) + C(K_{2N}) + 2$$ as $\epsilon = 2,$ will give the weakest upper bound. By the same argument with the denominator closures of $T_1$ and $T_2$, we find that $$C(L) \leq C(K_{1D}) + C(K_{2D}) + 2,$$ giving us the claim.

\end{proof}
Here we remark that Lemma~\ref{lem:upperbd} will hold even if we take two general tangles. Notice in the proof that we do not use the fact that $T_1$ or $T_2$ are non-splittable or strongly alternating. As this will not hold true for the other statements we included these hypotheses for uniformity. 
\subsection{Technical Lemmas}
Before we show the lower bound, we will need some more background, as well as some technical results. Lemma~\ref{lem:linkcomp} below was discussed in the proof of Lemma~\ref{lem:upperbd}.
\begin{lemma}
\label{lem:linkcomp}
    Let $L$ be the Conway sum of the tangles $T_1$ and $T_2$, and let $K_1$ and $K_2$ be closures of $T_1$ and $T_2$. If $k_L$ is the number of link components for $L$, and $k_1$ and $k_2$ the number of link components for $K_1$ and $K_2$, respectively, then $k_L = k_1 + k_2 - \epsilon$ for $\epsilon=0,1,2$.
\end{lemma}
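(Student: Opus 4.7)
The plan is to view the Conway sum $L$ as obtained from the disjoint union $K_1 \sqcup K_2$ by two saddle moves (band surgeries) on the closure arcs, and then track how each saddle affects the component count. First I would label the four endpoints of $T_1$ as $a,b,c,d$ (positioned NW, NE, SE, SW) and those of $T_2$ as $e,f,g,h$. For the numerator closure, $K_1 \sqcup K_2$ is obtained from $T_1 \sqcup T_2$ by adding the four arcs realizing the matching $\{ab,cd,ef,gh\}$, while the Conway sum $L$ uses the matching $\{af,be,ch,dg\}$ (the top outer arc, the two east--west identification arcs in the middle, and the bottom outer arc). The denominator-closure case starts from a different matching of the eight endpoints but is otherwise identical.

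Next I would realize the transition between these two matchings as a sequence of two $2$-swaps. For the numerator case: first replace $\{ab,ef\}$ by $\{af,be\}$ (a saddle in the top closure region), then replace $\{cd,gh\}$ by $\{ch,dg\}$ (a saddle in the bottom region). I would then invoke the standard fact that a single $2$-swap changes the number of link components by $-1$, $0$, or $+1$: if the two arcs being swapped lie on distinct components then the swap fuses them (change $-1$), while if they lie on the same component it either splits that component in two (change $+1$) or leaves it connected (change $0$).

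The key observation to finish the argument is that the first of the two saddles must reduce the component count by exactly one, because the arcs $ab \subset K_1$ and $ef \subset K_2$ sit on pieces of the disjoint union $K_1 \sqcup K_2$ that are, by definition, on different components. The second saddle, now applied to a link whose two halves have already been joined, contributes $-1$, $0$, or $+1$. Summing the two contributions gives $k_L - (k_1+k_2) \in \{-2,-1,0\}$, which is exactly the conclusion $k_L = k_1 + k_2 - \epsilon$ with $\epsilon \in \{0,1,2\}$. The only step needing real care is the effect of a $2$-swap on the component count, which is a short band-surgery computation once the two cases (same component vs.\ different components) are separated; the rest is bookkeeping of which closure arc lies on which cycle before and after each swap.
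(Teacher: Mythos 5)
Your proposal is correct and follows essentially the same route as the paper: the paper's own justification (given inside the proof of Lemma~2.1) is exactly that the first attachment joins the two disjoint links and so reduces the component count by one, while the second attachment changes it by $-1$, $0$, or $+1$, giving $\epsilon \in \{0,1,2\}$. Your version merely formalizes this as two band moves and makes the case analysis of a $2$-swap explicit, which is a welcome but not substantively different elaboration.
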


\begin{definition}
Let $L$ be a link in $S^3$ and let $N(L)$ be a neighborhood of $L$. A spanning surface $S$ of $L$ in $S^3$ is defined to be \textit{meridianally boundary compressible} if there exists a disk $D$ embedded in $S^3/N(L)$, such that $\partial D = \alpha \cup \beta$ where $\alpha = D \cap \partial N(L)$ and $\beta = D \cap S$. Notice both $\alpha$ and $\beta$ are arcs, $\beta$ does not cut off a disk of $S$, $\partial D \cap \partial S$ cuts $\partial S$ into two arcs $\phi_1, \phi_2$ and $\alpha \cup \phi_i$ is a meridian of the link for one of $i = 1,2$ as shown in Figure~\ref{fig:meridian}. A spanning surface is said to be \textit{meridianally boundary incompressible} if no such disk exists. 
\begin{figure}[h]
    \center
    \includegraphics[width=8cm]{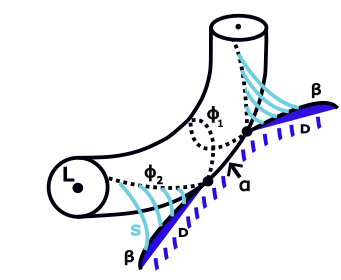}
    \caption{This figure shows a case where $L$ is meridianally boundary compressible. As $\phi_1 \cup \alpha$ create a meridian of $N(L)$.   }
    \label{fig:meridian}
\end{figure}
 \end{definition}
\begin{definition}
 Given an alternating projection of a link $L$ on $S^2$, we modify it so that in a neighborhood of each crossing, we have a ball whose equator lies on $S^2$ such that the over strand runs over the ball and the under goes underneath, see Figure~\ref{fig:menasco} for reference. We call every such ball a \textit{Menasco ball}, and we call such an embedding of $L$ relative to $S^2$ a \textit{Menasco projection} $P$ with $n$ crossings.\\
\end{definition}
 \begin{figure}[h]
     \centering
     \includegraphics[width=9cm]{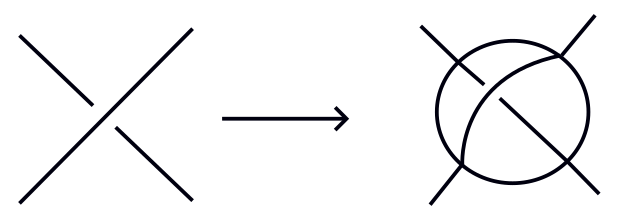}
     \caption{Here we put a crossing into Menasco form, with the over strand running across the top of the ball and the lower strand running along the bottom.}
     \label{fig:menasco}
 \end{figure}
 \begin{definition}
 We say that a surface $S$ intersects a Menasco ball $B_i$ in a \textit{crossing band} if $S \cap B_i$ consists of a disc bounded by the over and under strands on $\partial  B_i$ along with opposite arcs along the equator of $B_i$. We refer the reader to figures 28-30 in \cite{SpanAlt} for reference.
 
\indent
 Let $F = S^2/\bigcup_i B_i$, where the $B_i$ are the Menasco balls for $L$. Given an incompressible (not necessarily meridianally) surface $S$ spanning $L$, we can isotope $S$ so that  
 \begin{enumerate}[i.]
     \item $S \cap F$ is a collection of simple closed curves and arcs with endpoints on $L$ or the equator of a Menasco ball.
     \item $S$ is disjoint whereever possible from the interior of the $B_i$, including along $N(L)$. The only exception will be at crossing bands.
 \end{enumerate}
 We say such a surface isotoped in this way is in \textit{Menasco form}.
 \end{definition}

We will also need Lemma 5.1 from \cite{SpanAlt} by Adams and Kindred which we state here as Lemma~\ref{lem:crossingband}. Lemma~\ref{lem:crossingband} is required to prove Lemma~\ref{lem:AdamsKindred}, which is proven as Corollary 5.2 in \cite{SpanAlt}. Lemma~\ref{lem:AdamsKindred} is essential to our proof of the lower bound, as it guarantees the connectedness of any spanning surface of the numerator or denominator closures of tangles that we consider. 

\begin{lemma}
 \label{lem:crossingband}
 An incompressible and meridianally boundary incompressible surface $S$ spanning an alternating link $L$ can be isotoped relative to a given nontrivial Menasco projection $P$ to obtain a crossing band.
 \end{lemma}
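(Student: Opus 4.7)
The plan is to follow Menasco's standard technique: first place $S$ into Menasco form relative to $P$, then do innermost/outermost disk reductions using the two incompressibility hypotheses, and finally use alternatingness to force a saddle of crossing-band type at some Menasco ball. I would assume, for contradiction, that $S$ has been isotoped so as to minimize $|S\cap F|$ (with $F=S^2\setminus\bigcup B_i$) among all Menasco-form representatives, and that none of the resulting saddles is a crossing band; the goal is to derive a contradiction with one of the hypotheses.

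After putting $S$ in Menasco form, $S\cap F$ is a disjoint union of simple closed curves and proper arcs. Any simple closed curve of $S\cap F$ that is inessential on $S$ bounds a disk on $S$, and alternatingness together with the minimality assumption lets me push it off through a Menasco ball, contradicting minimality; a curve that is essential on $S$ but inessential in $F$ gives, via an innermost disk on $S^2$, a compressing disk for $S$, contradicting incompressibility of $S$. This reduces me to a collection of arcs of $S\cap F$ together with saddle disks $S\cap B_i$ whose boundaries on $\partial B_i$ alternate between arcs on the equator $\partial B_i\cap S^2$ and arcs on the over/under strands. A crossing band is exactly the saddle type whose boundary traverses both the full over-arc and the full under-arc of $L\cap\partial B_i$; anything else is a "short" saddle that cuts off a sub-arc of only one strand.

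Next, I would classify the possible short-saddle types at a given Menasco ball, and run outermost-arc arguments on each disk component of $F\setminus(S\cap F)$. The outermost arc on such a disk cobounds a subdisk $D\subset F$ with a sub-arc $\gamma$ of either $L$ or an equator. If $\gamma\subset L$, the disk $D$ either realizes meridional boundary compressibility (the forbidden configuration in the definition, yielding a contradiction) or it shows that $S$ has a boundary-parallel tab that can be isotoped off, again contradicting minimality of $|S\cap F|$. If $\gamma$ lies on the equator of some $B_i$, then $D$ together with the saddle it meets can be used, via the alternating pattern of over/under strands, to enlarge a short saddle into a crossing band, contradicting our hypothesis that none exists. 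This case division is where the alternating condition is essential: if $P$ were not alternating, the over/under sides of $\partial B_i$ would not line up correctly with the two outermost arcs, and the combinatorics would fail to close up.

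The main obstacle is precisely this last combinatorial step: verifying that "no crossing band anywhere" forces, through the outermost-disk analysis, either a genuine meridional boundary compression or an honest compressing disk, with no remaining residual case. The bookkeeping, tracking how arcs of $S\cap F$ match up across equators of adjacent Menasco balls and using that the projection is alternating and nontrivial (so no single Menasco ball is isolated), is the heart of Adams--Kindred's argument. Once every short-saddle configuration is eliminated, the only admissible saddle type remaining is the crossing band, proving the lemma.
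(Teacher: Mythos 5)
First, be aware that the paper does not prove this statement at all: it is quoted as Lemma 5.1 of Adams--Kindred \cite{SpanAlt}, and the author uses it only as an input to the induction in Lemma~\ref{lem:AdamsKindred}. So there is no in-paper argument to compare yours against; your proposal has to stand on its own as a reconstruction of the Adams--Kindred proof.

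Read that way, your write-up is a reasonable description of the general Menasco-style strategy, but it is not a proof, and you essentially concede this yourself: the paragraph beginning ``The main obstacle is precisely this last combinatorial step'' defers the elimination of all non-crossing-band configurations --- which is the entire content of the lemma --- to ``the heart of Adams--Kindred's argument.'' Two concrete problems. (1) With the definition of Menasco form used in this paper, a surface in Menasco form meets the balls $B_i$ \emph{only} in crossing bands, so there are no ``short saddles'' to classify; the actual content of the lemma is that $S$ cannot be isotoped to be disjoint from every $B_i$, i.e.\ that at least one crossing band must occur. Your contradiction hypothesis should therefore be that $S\cap B_i=\emptyset$ for every $i$, and the argument must show that a surface meeting only $F$ is forced, by the innermost-curve/outermost-arc analysis, to admit either a compressing disk or a meridional boundary-compressing disk; your saddle taxonomy does not engage with that situation. (2) In the step where an outermost disk $D$ meets a sub-arc $\gamma\subset L$, you assert that $D$ ``either realizes meridional boundary compressibility or shows that $S$ has a boundary-parallel tab,'' but this is precisely where the alternating hypothesis and the definition of meridional boundary compressibility (that $\beta$ not cut off a disk of $S$ and that $\alpha\cup\phi_i$ be a meridian) must be checked, and you give no argument ruling out the residual cases. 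Until that case analysis is actually carried out, what you have is a plan for a proof rather than a proof; for the purposes of this paper it would be cleaner to simply cite \cite{SpanAlt} as the author does.
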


\begin{lemma}
\label{lem:AdamsKindred}
Any spanning surface for a non-splittable, alternating link is connected.
\end{lemma}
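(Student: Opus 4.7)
The plan is to argue by contradiction. Suppose $S$ is a disconnected spanning surface for the non-splittable alternating link $L$. Grouping components, write $S = S_1 \sqcup S_2$ and let $L_i = \partial S_i$, so that $L = L_1 \sqcup L_2$ is a partition into disjoint sublinks. Since each component of a spanning surface carries a nonempty portion of the boundary link, $L_1$ and $L_2$ are both non-empty. Compressions and meridional boundary compressions each take place along disks whose boundary circle or arc lies in a single connected component of $S$; hence neither operation merges components of $S$, while each strictly raises $\chi(S)$. Performing finitely many such moves, I may therefore assume $S$ is incompressible and meridionally $\partial$-incompressible while remaining disconnected.

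The first key observation is that non-splittability of $L$ forces any reduced alternating projection $P$ of $L$ to have at least one crossing whose two strands belong to different sublinks $L_1$ and $L_2$. Indeed, if every crossing of $P$ were internal to $L_1$ or to $L_2$, then the subdiagrams of $L_1$ and $L_2$ would be disjoint subgraphs of $S^2$, separated by some simple closed curve $\gamma \subset S^2$; the two disks bounded by $\gamma$ in $S^2$ lift to a $2$-sphere in $S^3 \setminus L$ separating $L_1$ from $L_2$, contradicting non-splittability. Fix such a \emph{mixed} crossing $c_0$.

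The second key observation is Lemma~\ref{lem:crossingband}: after a further isotopy, $S$ contains a crossing band at some crossing $c$ of $P$. Because the crossing band is a single embedded disk, it lies in one component of $S$, say $S_1$, which forces both strands at $c$ to lie in $L_1$; in particular $c$ is not mixed, so $c \neq c_0$. To finish, I iterate: remove the crossing band (smoothing $c$ in the manner determined by the band) to obtain a spanning surface $S'$ of a link $L'$ whose diagram $P'$ is alternating, has one fewer crossing, and retains the partition $L' = L_1' \sqcup L_2'$, including the image of $c_0$ as a mixed crossing. After re-simplifying $S'$ to be incompressible and meridionally $\partial$-incompressible, the same argument either produces a new crossing band at a non-mixed crossing (strictly reducing the count of non-mixed crossings) or, once only mixed crossings remain, the first observation produces a splitting sphere. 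This sphere pulls back through the sequence of crossing resolutions to a splitting sphere for $L$, contradicting non-splittability.

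The hard part will be the iteration: one must verify that each intermediate link $L'$ retains a nontrivial alternating projection so that Lemma~\ref{lem:crossingband} can be re-applied, that the intermediate surface can be made incompressible and meridionally $\partial$-incompressible without losing the disconnection or shifting the partition of the boundary, and that the splitting sphere produced for the fully reduced diagram can be traced back unambiguously to a splitting sphere for the original $L$ despite the accumulated crossing resolutions.
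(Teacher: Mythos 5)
Your overall strategy---put the surface in Menasco form, invoke Lemma~\ref{lem:crossingband} to produce a crossing band, observe that a crossing band is a single connected disk meeting both strands of its crossing and hence can only occur at a crossing whose two strands bound the \emph{same} component of $S$, then cut and repeat---is essentially the paper's argument recast as a descent to a contradiction rather than an induction on the crossing number. The genuine gap is in how you terminate the descent. Your ``first key observation'' produces a splitting sphere when \emph{no} crossing of the diagram is mixed; but the terminal state of your iteration is that \emph{only} mixed crossings remain, which is the opposite hypothesis, so that observation does not apply there. The step you then lean on---pulling a splitting sphere for the fully resolved link back through the sequence of crossing resolutions to a splitting sphere for $L$---is not justified and is genuinely problematic: regluing a crossing band reintroduces strands inside a Menasco ball that the sphere has no reason to avoid. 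You flag this yourself as ``the hard part,'' and as written it is a missing idea, not a deferred routine verification.

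The correct termination is in fact easier than what you attempt and makes the pull-back unnecessary. Every surgery in your iteration occurs at a non-mixed crossing, so the mixed crossing $c_0$ (which exists in the original diagram because $L$ is non-splittable) survives to every stage; hence every intermediate projection is nontrivial and Lemma~\ref{lem:crossingband} keeps applying. As long as the surface remains disconnected, each crossing band it produces must sit at a non-mixed crossing, so the number of non-mixed crossings strictly decreases; when it reaches zero the lemma still demands a crossing band, which would have to sit at a mixed crossing and would therefore join the two pieces of the surface---the desired contradiction, with no splitting sphere in sight. (Two smaller points: a simple closed curve in $S^2$ separating the diagrams of $L_1$ and $L_2$ need not exist when the sublinks are nested in the projection, although the standard fact that a diagram with no crossings between two sublinks represents a split link still gives you $c_0$; and you should state explicitly that a spanning surface here has no closed components, since otherwise the lemma is false as stated.) For comparison, the paper avoids all of this bookkeeping by running a genuine induction: cut along the crossing band, apply the inductive hypothesis to the resulting non-splittable alternating link with fewer crossings to conclude the cut surface is connected, and note that regluing a band to a connected surface cannot disconnect it.
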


 \begin{proof}
 This proof uses induction on the number of crossings a non-splittable, alternating link contains. As the unknot contains only one link component, any spanning surface for the unknot must be connected. Now consider a non-splittable alternating link $L$ which has $n$ crossings, and let $S$ be a spanning surface for $L$ then $S$ is either incompressible and meridianally boundary incompressible or a finite sequence of compressions take $S$ to an incompressible and meridianally boundary incompressible surface $S'$. Choose a reduced alternating diagram of $L$ and put $S'$ into Menasco form relative to $L$. By Lemma~\ref{lem:crossingband}, we can isotope $S'$ such that $S'$ contains a crossing band in at least one of the Menasco balls, $M$. Further, when we cut open the link along $M$, we find a spanning surface $S''$ for a non-splittable alternating link with fewer crossings $L'$. Part of the equator of $M$ replaces the crossing strands and guarantees that $S''$ is a spanning surface of $L'$. Then, by induction, as $S''$ is a spanning surface for a link of $n-1$ crossings, it is connected. Regluing in the crossing band does not disconnect our surface, showing that $S'$ and $S$ are connected. \end{proof}

\begin{lemma}

\label{lem:nonsep}
Let $\Sigma$ be a Conway sphere which intersects a Conway sum $L$ of two strongly alternating tangles $T_1$ and $T_2$ in $S^3$ such that $\Sigma$ separates $T_1$ and $T_2$. If we let $S$ be a spanning surface of $L$ and $S \cap \Sigma$ contains a simple closed curve $\gamma$ such that $\gamma$ does not separate the two arcs in $S \cap \Sigma$ on $\Sigma$, then there exists an isotopy on $S$ which will eliminate $\gamma$.
\end{lemma}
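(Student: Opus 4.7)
The plan is a standard innermost-disk argument on $\Sigma$ together with the irreducibility of $S^3$. Since $\Sigma \cong S^2$, the Jordan curve theorem yields two disks in $\Sigma$ bounded by $\gamma$, and the hypothesis that $\gamma$ does not separate the two arcs of $S \cap \Sigma$ means both arcs lie in one of them. Let $D \subset \Sigma$ denote the other disk. Then $\partial D = \gamma$, $D$ is disjoint from both arcs of $S \cap \Sigma$, and $D \cap L = \emptyset$, since $L \cap \Sigma$ consists of the four intersection points on the two arcs.

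After first eliminating any simple closed curves of $S \cap \Sigma$ that lie in $\mathrm{int}(D)$ by applying the same argument inductively to innermost such curves (each is automatically non-separating, since it lies entirely inside $D$), I may assume $\mathrm{int}(D) \cap S = \emptyset$. Taking $S$ to be incompressible in the complement of $L$, which I may arrange by preliminary compressions that do not enlarge the crosscap number, the disk $D$ cannot be a compressing disk for $S$, so $\gamma$ must bound a disk $E \subset S$; I choose $E$ innermost in $S$ so that $\mathrm{int}(E) \cap \Sigma = \emptyset$. Then $D \cup E$ is a 2-sphere in $S^3 \setminus L$ which, by irreducibility of $S^3$, bounds a 3-ball $B$. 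Choosing the side so that $B \cap L = \emptyset$ (using that $\partial B$ misses $L$ and that the small ball cut off by $D \cup E$ lies on the side of $\Sigma$ away from the four endpoints of $L$ on $\Sigma$, so contains no strands of the Conway sum), I push $E$ through $B$ across $D$ and slightly past $\Sigma$. This is an isotopy of $S$ in $S^3 \setminus L$ which removes $\gamma$ from $S \cap \Sigma$ without introducing any new components.

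The main obstacle I expect is producing the disk $E \subset S$ with $\partial E = \gamma$: this step genuinely requires $\gamma$ to be null-homotopic on $S$, which is not automatic for an arbitrary spanning surface and is where incompressibility of $S$ is essential. A secondary technical point is verifying that the 3-ball $B$ can be chosen disjoint from $L$; this follows from the local structure of the Conway sum near $\Sigma$, using that $D$ lies on the hemisphere of $\Sigma$ opposite to the four strand-endpoints of $L$.
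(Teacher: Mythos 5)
Your overall strategy --- innermost disk on $\Sigma$, incompressibility of $S$ to produce a disk $E\subset S$ with $\partial E=\gamma$, and irreducibility to push $E$ across a ball --- is the classical three-manifold argument, and it is a genuinely different route from the one the paper takes. The paper never appeals to incompressibility of $S$. Instead it surgers $S$ along $\gamma$ (cuts and caps with disks) and runs a case analysis on the connectivity of the result: if the surgered surface is connected and not closed, it is a spanning surface of $L$ with Euler characteristic two larger, contradicting minimality of $C(S)$ (connectedness of the pieces is supplied by Lemma~\ref{lem:AdamsKindred}); only in the remaining case, where surgery splits off a single closed component $U$, does the paper perform an honest isotopy, sliding $U$ to the other side of $\Sigma$ using the hypothesis that $\gamma$ does not separate the two arcs. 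So the paper trades your incompressibility/irreducibility input for crosscap-minimality plus Adams--Kindred connectedness. Note that in both arguments the conclusion actually delivered is ``there is a spanning surface with no larger crosscap number and with $\gamma$ removed,'' rather than literally an isotopy of the given $S$; that weaker statement is what the later lemmas use.

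Two points in your write-up need repair. First, your justification that the ball $B$ bounded by $D\cup E$ misses $L$ is not correct as stated: $E$ is a disk in $S$ and can run through both tangle balls, so $D\cup E$ need not cut off a ``small ball on the side of $\Sigma$ away from the four endpoints.'' What you actually need is that $L$ lies entirely in one complementary ball of the sphere $D\cup E$, i.e.\ irreducibility of the link exterior (non-splitness of $L$), which must be extracted from the hypothesis that the tangles are non-splittable and strongly alternating; irreducibility of $S^3$ alone does not give this. Second, the ``preliminary compressions'' are not isotopies and can render $S$ orientable or disconnected, while the crosscap number is a minimum over non-orientable spanning surfaces; you should either verify that a crosscap-realizing surface is already geometrically incompressible (a compression raises $\chi$ by $2$ and restoring non-orientability with a half-twisted band costs only $1$, so a compressible $S$ could not have realized $C(L)$), or do that bookkeeping explicitly. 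Finally, choosing $E$ ``innermost in $S$'' changes the curve under consideration and may destroy the hypothesis on $\gamma$; it is cleaner to allow $\mathrm{int}(E)\cap\Sigma\neq\emptyset$ and observe that pushing $E$ across $B$ onto $D$ and off $\Sigma$ removes $\gamma$ together with any curves of $S\cap\Sigma$ lying on $E$, and creates no new ones.
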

\begin{proof}
Assume there is only one closed curve $\gamma$ contained in $S \cap \Sigma$. First we consider the case where cutting $S$ along $\gamma$ and gluing in disks along the resulting boundary components results in two disconnected closed components. Notice that this implies that $S$ has a disconnected closed component, contradicting that $S$ is a crosscap realizing surface. \\
\indent
Next assume that cutting along $\gamma$ and gluing in disks does not result in a closed surface component. Then cutting $S$ along $\Sigma$ and gluing disks along the copies of $\gamma$ will result in spanning surfaces for a closure of $T_1$ and a closure of $T_2$, both of which are connected by Lemma~\ref{lem:AdamsKindred}. Reversing this procedure everywhere but $\gamma$ results in a new connected surface $S'$ which also spans $L$. But as $S'$ has two additional disks, $\chi(S') = \chi(S) + 2$, showing that $C(S') < C(S)$ contradicting that $S$ is a crosscap realizing spanning surface. 
\\
\indent
The only remaining possibility is if cutting $S$ along $\gamma$ and gluing disks to the two resulting boundaries, results in a single closed surface component, $U$. Then $U$ will separate $S^3$ into two disjoint spaces. As $\gamma$ does not separate the two arcs on $\Sigma$, one side of $U$ must not contain any part of $S$. But then we can move $U$ to the opposite side of $\Sigma$ and re-glue it to $S$ along $\gamma$ to find an isotopy of $S$ for which $\gamma$ is no longer in $\Sigma \cap S$.  \\
\indent
In the case that we have multiple such closed curves along $\Sigma$ we do the same as above starting with the innermost closed curve. The innermost closed curve in this case is the one that bounds an empty disk on $\Sigma$. Hence showing the claim.  
\end{proof}
By Lemma~\ref{lem:nonsep}, we can choose $S$ such that the only simple closed curves in $\Sigma \cap S$ are those which bound two disks each containing an arc. Next we show that $S$ can be chosen so that $S \cap \Sigma$ contains at most one such simple closed curve.

\begin{lemma}
\label{lem:curveslemma}
There exists a spanning surface $S$ for $L$, where $L$ is the Conway sum of two strongly alternating tangles, such that $C(S) = C(L)$ and $\Sigma \cap S$ contains at most one closed curve. 
\end{lemma}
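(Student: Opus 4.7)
The strategy is to start from a spanning surface $S$ realizing $C(L)$ and satisfying the conclusion of Lemma~\ref{lem:nonsep}, and to successively reduce the number of closed curves in $S \cap \Sigma$ by pairs, without increasing $C(S)$, until at most one such curve remains.

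Since every closed curve of $S \cap \Sigma$ separates the two arcs on $\Sigma$, all of them are mutually parallel on the four-punctured sphere $\Sigma \setminus (L \cap \Sigma)$, and so can be linearly ordered by nesting. If at least two exist, pick adjacent curves $\gamma, \gamma'$; they cobound an annulus $A \subset \Sigma$ with $A \cap L = \emptyset$ and $\mathrm{int}(A) \cap S = \emptyset$. Perform a surgery of $S$ along $A$: thicken $A$ in $S^3$, remove the small annular collars of $\gamma$ and of $\gamma'$ lying in this thickening of $S$, and glue in two parallel copies $A^+, A^-$ of $A$ pushed slightly into the two sides of $\Sigma$. The resulting surface $S'$ has boundary $L$, and since all annuli involved have Euler characteristic zero, $\chi(S') = \chi(S)$. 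Also $S' \cap \Sigma = (S \cap \Sigma) \setminus (\gamma \cup \gamma')$, so the closed-curve count drops by two.

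It remains to verify that $S'$ can be chosen to be a connected, non-orientable spanning surface, from which $C(S') = C(S) = C(L)$ and we may iterate. Connectedness follows from Lemma~\ref{lem:AdamsKindred}: the two pieces of $S$ on the sides of $\Sigma$, once capped along all closed curves of $S \cap \Sigma$, are spanning surfaces for closures of $T_1$ and $T_2$ and so are connected; the surgery is local, and the surviving arcs and closed curves in $S' \cap \Sigma$ continue to glue the two sides together. Any extra closed components produced by the surgery are built from a subsurface of $S$ cobounded by $\gamma$ and $\gamma'$ together with a copy of $A$, hence have $\chi \leq 0$, and may be discarded without raising $C$. The main obstacle is non-orientability: if $S'$ becomes orientable, then $\gamma$ or $\gamma'$ must have been one-sided in $S$, and in that case one either reglues one of the $A^\pm$ with an orientation-reversing identification along one boundary circle to reintroduce a one-sided curve, or handles this case directly by an initial local adjustment of $S$ near $\gamma$. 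Each iteration drops the closed-curve count by two, so after finitely many steps at most one closed curve in $S \cap \Sigma$ remains.
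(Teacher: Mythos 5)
Your proposal is correct and follows essentially the same route as the paper: take two adjacent nested closed curves of $S\cap\Sigma$, surger $S$ along the annulus they cobound in $\Sigma$ by replacing collars with two pushed-off copies of that annulus, note $\chi$ is unchanged since annuli have Euler characteristic zero, and invoke Lemma~\ref{lem:AdamsKindred} for connectedness before iterating. Your additional remarks on discarding closed components and preserving non-orientability are extra care the paper's own proof does not spell out, but they do not change the argument.
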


\begin{proof}
By Lemma~\ref{lem:nonsep} we can assume that if $\Sigma \cap S$ contains closed curves $\gamma_1,...,\gamma_n$, they each split $\Sigma$ such that the two arcs lie on opposite disks. Assume we have $n>1$ such closed curves in $\Sigma \cap S$, and let $\gamma_1$ and $\gamma_2$ be such that $\gamma_1$ bounds a disk on $\Sigma$ such that no other $\gamma_i$ are in the disk and $\gamma_2$ bounds a disk where the only closed curve in it is $\gamma_1$. \\
\indent
We now find a spanning surface $S'$ such that $C(S') = C(L)$ and $S' \cap \Sigma$ contains $n-2$ closed curves. We start with $S$ and cut along $\gamma_1$ and $\gamma_2$ and then glue in annuli whose boundaries are a copy of $\gamma_1$ and a copy of $\gamma_2$. Then as the Euler characteristic of an Annulus is 0, this cutting and gluing operation will result in $\chi(S) = \chi(S')$. \\
\indent
It remains to show that $S'$ will be a connected surface. As in the proof of Lemma~\ref{lem:nonsep} we cut $S$ along $\Sigma$ and glue in disks along each $\gamma_i$ except for $\gamma_1$ and $\gamma_2$ which we glue a pair of annuli.  This results in spanning surfaces $S_1$ and $S_2$ for closures of $T_1$ and $T_2$ which by Lemma~\ref{lem:AdamsKindred} are connected. If we reverse this procedure everywhere except $\gamma_1$ and $\gamma_2$ the result will be $S'$ and as we only remove disks before regluing, $S'$ will be connected. Hence, we have found a spanning surface $S'$ for $L$ such that $C(S') = C(L)$ and $S' \cap \Sigma$ contains two less closed curves. Hence, repeating for all such pairs of closed curves in $S \cap \Sigma$ we will find the claim.  
\end{proof}
\begin{lemma}
\label{lem:closedsurfaces}
We can choose a surface $S$ which spans a link $L$ such that $C(S) = C(L)$ and cutting along $\Sigma$ will not give us a closed surface component. 
\end{lemma}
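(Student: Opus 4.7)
The plan is to reduce to the configuration guaranteed by Lemmas~\ref{lem:nonsep} and~\ref{lem:curveslemma}, then appeal to Lemma~\ref{lem:AdamsKindred} to rule out the existence of a closed component. Start with any surface $S$ realizing $C(L)$ and use those two lemmas to assume $S \cap \Sigma$ consists of the two arcs $a_1, a_2$ together with at most one simple closed curve $\gamma$, which (if present) separates $a_1$ and $a_2$ on $\Sigma$. Cutting $S^3$ along $\Sigma$ produces two 3-balls $B_1, B_2$ containing the tangles $T_i$, and correspondingly splits $S$ into subsurfaces $S_i \subset B_i$. If $\gamma$ is present, cap the boundary circle of $S_i$ coming from $\gamma$ by a small disk $D_i \subset B_i$, producing $\hat S_i = S_i \cup D_i$; otherwise set $\hat S_i = S_i$.

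By construction, $\partial \hat S_i$ consists of the four tangle strands of $T_i$ together with $a_1, a_2$, which glue at the four points of $L \cap \Sigma$ to form either the numerator or denominator closure $K_{i*}$ of $T_i$. That this closure is clean relies on Lemma~\ref{lem:nonsep}: the arcs $a_1, a_2$ lie on opposite disks of $\Sigma$ bounded by $\gamma$, so $D_i$ does not interfere with the closure of the tangle strands. Because $T_i$ is strongly alternating, $K_{i*}$ is prime and alternating, hence non-splittable and alternating, and thus $\hat S_i$ is a spanning surface to which Lemma~\ref{lem:AdamsKindred} applies, forcing $\hat S_i$ to be connected.

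Next, suppose for contradiction that cutting $S$ along $\Sigma$ produces a closed surface component. Such a component can only arise from a connected piece of some $S_i$ whose entire boundary lies on the $\gamma$-circle, since $a_1, a_2$ terminate on $L$ and the tangle strands are part of $\partial S$, so any other subsurface inherits boundary on $L$ or on the arcs. Capping with $D_i$ then produces a closed connected component of $\hat S_i$ disjoint from the piece carrying all of $\partial \hat S_i = K_{i*}$, so $\hat S_i$ is disconnected, contradicting the conclusion of the previous paragraph. Hence every $S$ realizing $C(L)$ in the configuration of Lemmas~\ref{lem:nonsep} and~\ref{lem:curveslemma} already has the desired property. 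The main subtlety to check carefully is that $\hat S_i$ genuinely qualifies as a spanning surface of the non-splittable alternating link $K_{i*}$ (so that Lemma~\ref{lem:AdamsKindred} may be invoked), which is exactly what the strong alternating hypothesis on $T_i$ together with the preparation afforded by Lemma~\ref{lem:nonsep} provides.
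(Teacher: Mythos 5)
Your argument is correct and follows essentially the same route as the paper: the paper's proof of this lemma is a one-line back-reference to the proof of Lemma~\ref{lem:nonsep}, where the same moves appear --- cut along $\Sigma$, cap the copies of $\gamma$ with disks, and invoke Lemma~\ref{lem:AdamsKindred} to conclude that the resulting spanning surfaces of the (prime, alternating, hence non-splittable) closures are connected, so no closed component can split off. Your write-up just makes explicit the reduction to at most one curve $\gamma$ and the capping step that the paper leaves implicit.
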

\begin{proof}
This was shown in the proof of Lemma~\ref{lem:nonsep}. 
\end{proof}

\subsection{Lower Bound of Theorem~\ref{thm:CrossMain}}
In this subsection we will prove the lower bound of Theorem~\ref{thm:CrossMain}, which will be restated as Lemma~\ref{lem:lowerbd}. We start by discussing what happens when we cut our link $L$ along $\Sigma$. Assume that $S$ is a non-orientable spanning surface for $L$ with $C(S) = C(L)$. The two arcs on $\Sigma$ will define how we close $T_1$ and $T_2$ after cutting. We let $K_1$ and $K_2$ be these closures. To see that $K_1$ and $K_2$ are the numerator or denominator closures consider a crossing which as a vertex in the tangle graph is adjacent to a 1-valent vertex. If the exterior regions for a tangle are the faces bounded by the box in the graph then one of the two exterior regions adjacent to the crossing must be included in $S$. This means the boundary of this region will result in the numerator or denominator closures.  \\
\indent
We are now ready to prove the lower bound of Theorem~\ref{thm:CrossMain}.

\begin{lemma}
\label{lem:lowerbd}
Let $T_1$ and $T_2$ be non-splittable, strongly alternating tangles and $L$ the link resulting from the Conway sum of $T_1$ and $T_2$. Also, let $S$ be a spanning surface of $L$ such that $C(L) = C(S)$.  Then:
$$ C(K_1) + C(K_2) - 2 \leq C(L).$$
\end{lemma}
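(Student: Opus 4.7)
My plan is to reverse the construction from the upper bound: starting with an optimal non-orientable spanning surface $S$ of $L$ with $C(S) = C(L)$, I cut $S$ along $\Sigma$ and cap off to produce spanning surfaces of $K_1$ and $K_2$. First, Lemma~\ref{lem:curveslemma} and Lemma~\ref{lem:closedsurfaces} let me isotope $S$ so that $S \cap \Sigma$ is exactly the two arcs connecting the four tangle endpoints, together with at most one simple closed curve $\gamma$, and so that no closed surface component appears after cutting. By the remark preceding the lemma, those two arcs are precisely the closure arcs that define $K_1$ and $K_2$.

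Let $\hat S_1, \hat S_2$ denote the pieces of $S$ on the two sides of $\Sigma$. Since cutting along a properly embedded $1$-submanifold preserves Euler characteristic, $\chi(\hat S_1) + \chi(\hat S_2) = \chi(S)$. I then form $S_i$ by capping off the boundary copy of $\gamma$ in $\hat S_i$ with a disk, so that $\partial S_i = K_i$ and $\chi(S_1) + \chi(S_2) = \chi(S) + 2\epsilon$, where $\epsilon \in \{0,1\}$ records whether $\gamma$ is present. Because each $K_i$ is a non-split alternating link (being a prime alternating closure of a strongly alternating tangle), Lemma~\ref{lem:AdamsKindred} forces each $S_i$ to be connected.

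Combining this with Lemma~\ref{lem:linkcomp}, which gives $k_L = k_{K_1} + k_{K_2} - \epsilon'$ for some $\epsilon' \in \{0,1,2\}$, the central book-keeping identity is
\[
C(S_1) + C(S_2) = 4 - \bigl(\chi(S_1)+\chi(S_2)\bigr) - \bigl(k_{K_1}+k_{K_2}\bigr) = C(S) + 2 - 2\epsilon - \epsilon' \leq C(L) + 2.
\]
Provided each $S_i$ is non-orientable, the definition of the crosscap number gives $C(K_i) \leq C(S_i)$, and rearranging yields $C(K_1) + C(K_2) - 2 \leq C(L)$, as claimed.

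The step I expect to be the main obstacle is ensuring that both $S_1$ and $S_2$ are non-orientable: non-orientability of $S$ does not on its own transfer to each side after cutting along $\Sigma$, since the non-orientability of $S$ could in principle be carried by a sign-flipping identification at one of the gluing arcs while both $\hat S_i$ remain orientable. I would deal with this either through a direct orientation-matching analysis of how the arcs and curve of $S \cap \Sigma$ glue across $\Sigma$, or by attaching a Mobius band to any orientable $S_i$ to obtain a non-orientable spanning surface of $K_i$ while showing that the slack terms $2\epsilon$ and $\epsilon'$ in the displayed equation above absorb the resulting loss. It is this case analysis that pins down the constant $-2$ in the lower bound and matches the $+2$ appearing in Lemma~\ref{lem:upperbd}.
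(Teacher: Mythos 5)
Your overall strategy matches the paper's: normalize $S\cap\Sigma$ via Lemmas~\ref{lem:curveslemma} and~\ref{lem:closedsurfaces}, cut $S$ along $\Sigma$, cap off the closed curve, use Lemma~\ref{lem:AdamsKindred} for connectedness and Lemma~\ref{lem:linkcomp} for the component count. But there is a concrete error in the Euler characteristic bookkeeping, and it interacts badly with the one step you leave open. The claim that cutting along a properly embedded $1$--submanifold preserves Euler characteristic is false for arcs: if $S'$ is obtained from $S$ by cutting along a properly embedded arc or circle $c$, then $\chi(S')=\chi(S)+\chi(c)$, so each of the two arcs of $S\cap\Sigma$ contributes $+1$ while a circle contributes $0$. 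Hence $\chi(S_1)+\chi(S_2)=\chi(S)+2+2\epsilon$ (the $+2$ from the arcs, the $+2\epsilon$ from the two capping disks), and the correct identity is $C(S_1)+C(S_2)=C(S)-2\epsilon-\epsilon'\le C(L)$, not $C(L)+2$.

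This matters because your deferred step --- non-orientability of the pieces --- cannot be handled the way you suggest. Both $S_1$ and $S_2$ can genuinely be orientable even though $S$ is not (the non-orientability can be carried by an orientation-reversing identification along the gluing arcs), so no ``orientation-matching analysis'' will avoid the worst case; one must attach a half-twisted band to each orientable piece, which lowers $\chi(S_i)$ by $1$ and raises $C(S_i)$ by $1$, a total cost of up to $2$. You propose to absorb this into the slack terms $2\epsilon$ and $\epsilon'$, but both can be zero ($S\cap\Sigma$ need contain no closed curve, and $\epsilon'=0$ is allowed in Lemma~\ref{lem:linkcomp}), so there is nothing to absorb it; combined with your identity this would only give $C(K_1)+C(K_2)-4\le C(L)$. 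The two discrepancies are the same $2$: once the arc-cutting is counted correctly you have $C(S_1)+C(S_2)\le C(L)$ \emph{before} the orientability correction, and the $-2$ in the statement is exactly the cost of the at most two half-twisted bands. This is how the paper runs the computation, writing $\chi(S)=\chi(S_1)+\chi(S_2)-2-2n+t$ with $t\le 2$ counting the bands added to orientable pieces and taking the extremal case $n=0$, $t=2$, $\epsilon'=0$.
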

\begin{proof}
Let $S$ be a non-orientable spanning surface for $L$ such that $C(L) = C(S)$. By Lemma~\ref{lem:curveslemma} $S$ can be chosen such that the intersection of $S$ with $\Sigma$ contains at most one closed curve $\gamma$ and that both disks $\gamma$ bounds contain arcs. We cut $S$ along $\Sigma$ and if $\gamma$ exists we glue a disk to each copy to get spanning surfaces $S_1$ and $S_2$ for $K_1$ and $K_2$ respectively. By Lemma~\ref{lem:closedsurfaces} we know that $S_1$ and $S_2$ will not have closed components and by lemma~\ref{lem:AdamsKindred} $S_1$ and $S_2$ must be connected as $K_1$ and $K_2$ are alternating. \\
\indent
If $k_1$ and $k_2$ are the number of link components for $K_1$ and $K_2$ respectively, then by Lemma~\ref{lem:linkcomp} $k_1 + k_2 - \epsilon = k_L$ where $\epsilon = 0,1,2$.\\
\indent
Next we consider how the Euler characteristics of $S$ and the sum of the Euler characteristics of $S_1$ and $S_2$ will be related. We know that $\Sigma \cap S$ contains two arcs and at most one closed curve by Lemma~\ref{lem:curveslemma}. Then cutting the two arcs along $\Sigma$ will increase the Euler characteristic by 2. Assume there are $n$ closed curves, gluing disks along the two copies after cutting will further increase the Euler characteristic by $2n$. The final consideration we have to make is whether $S_1$ and $S_2$ are non-orientable, let $t = 0,1,2$ be the number of $S_i$ which are orientable. Notice we will have to add $t$ half twist bands to make sure all the $S_i$ are non-orientable decreasing the Euler Characteristic by $t$. Now we see that $\chi(S) = \chi(S_1) + \chi(S_2) - 2 - 2n + t$. Then:
$$C(S_1) + C(S_2) = 4 - \chi(S) - 2 - 2n + t - k_L - \epsilon = C(L) - 2n + t - \epsilon.$$
Simplifying we find $$C(K_1) + C(K_2) + 2n - t + \epsilon \leq C(L).$$ This will be the weakest when $n = 0$, $t = 2$, and $\epsilon = 0$. Hence we find $C(K_1) + C(K_2) -2 \leq C(L)$.
\end{proof}
We restate Theorem~\ref{thm:CrossMain}:
\begin{reptheorem}{thm:CrossMain}
     Let $T_1$ and $T_2$ be non-splittable, twist reduced, strongly alternating tangles. Let $L$ be the link formed by the Conway sum of $T_1$ and $T_2$. Let $K_{iN}$ be the link formed by the numerator closure of $T_i$, $i \in \{1,2\}$, similarly $K_{iD}$ will be the link formed by the denominator closure. If we let $m = \text{min\{} C(K_{1N}) + C(K_{2N}), C(K_{1D}) + C(K_{2D})\} $ then,
$$C(K_1) + C(K_2) - 2 \leq C(L) \leq m + 2. $$

 \end{reptheorem}
Theorem~\ref{thm:CrossMain} follows directly from Lemma~\ref{lem:upperbd} and Lemma~\ref{lem:lowerbd}. A similar result exists to Theorem~\ref{thm:CrossMain} for the cross cap number of connected sums. In particular, Clark~\cite{Clark} showed with a strategy similar to our own, that if $K_1$ and $K_2$ are knots, then 
 $$C(K_1) + C(K_2) - 1 \leq C(K_1 \# K_2) \leq C(K_1) + C(K_2).$$

\section{Crosscap Number, Twist Number, and the Jones Polynomial}
\subsection{Twist Number Bounds}
Now we have a relationship between the crosscap numbers of the Conway sum of two tangles and the closures of the tangles which compose it. Unfortunately, the bounds depend upon the tangles and which closures we take. But we can use Theorem~\ref{thm:CrossMain} to find bounds for $C(L)$ entirely dependent upon $L$. Before proceeding with the statements, we will need a definition.
\begin{definition}
    
    The \textit{twist number} of a link diagram or a tangle diagram is the number of twist regions a link diagram contains, where a \textit{twist region} is a maximal collection of bigon regions contained end to end. We call a link diagram \textit{twist-reduced} if any simple closed curve which meets the link diagram transversely at four points, with two points adjacent to one crossing and the other two another crossing, bounds a possibly empty collection of bigons arranged end to end between the two crossings. 
\end{definition}
We take a brief pause to mention that we can take the Conway sum of more than two tangles. In particular, for tangles $T_1, T_2,..., T_n$, we can glue the eastern strands of $T_i$ to the western strands of $T_{i+1}$. Then we glue the eastern strands of $T_n$ to the western strands of $T_1$. See Figure~\ref{fig:LargeTangle} for an example.  
\begin{lemma}
\label{lem:twistnum}
Let $T_1, T_2,...,T_n$ be strongly alternating tangle diagrams whose Conway sum is a link diagram $D(L)$. Then $tw(D(L)) = \sum_{i=1}^n tw(T_i)$, where $tw(D(L))$ is the twist number for $D(L)$ and $tw(T_i)$ the twist number for the tangle diagram $T_i$. 
\end{lemma}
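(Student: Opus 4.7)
The plan is to establish a natural bijection between the twist regions of $D(L)$ and the union of the twist regions of the individual $T_i$'s. Since the Conway sum does not alter the internal structure of any $T_i$ and the four connecting strands between consecutive tangles carry no crossings, every bigon of some $T_i$ remains a bigon of $D(L)$; so the equality $tw(D(L)) = \sum_{i=1}^n tw(T_i)$ follows once we show that no bigon of $D(L)$ has its two crossings in different tangles -- a \emph{straddling} bigon. Indeed, two consecutive bigons in a twist region must share a crossing, and each crossing lies in a unique tangle, so any twist region spanning two tangles would necessarily contain a straddling bigon.

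Suppose toward contradiction that $B$ is a straddling bigon with $c_1 \in T_i$ and $c_2$ in some other tangle. Since the two arcs of $B$ contain no crossings besides $c_1$ and $c_2$, the two half-strands of $c_1$ lying on $\partial B$ reach two of the four boundary points of $T_i$ (NW, NE, SW, SE) directly, with no intervening crossings. We case-analyze on which pair of boundary points is used.

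In the \emph{adjacent corners} case, the two points lie on a common side of $T_i$. If they are on the north or south side (NE/NW or SE/SW), the numerator closure $K_{iN}$ contains the direct closure arc joining them, which together with the two clean paths from $c_1$ to the boundary forms a simple loop through $c_1$ with no other crossings; hence $c_1$ is a nugatory crossing of $K_{iN}$, contradicting that $K_{iN}$ is a prime alternating diagram. The analogous argument using $K_{iD}$ handles the east and west pairs (NE/SE and NW/SW). In the \emph{opposite corners} case, the two points are diagonally opposite (NE/SW or NW/SE), so the two bigon half-strands at $c_1$ lie on a single strand passing through $c_1$; the sub-disk $B \cap T_i$ is then bounded by these two half-strands together with an arc of the Conway sphere around $T_i$. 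The \emph{other} strand through $c_1$ has one of its two half-strands exiting $c_1$ directly into the interior of this sub-disk, which contradicts that $B$ is a $2$-cell region of $D(L)$.

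The main obstacle is recognizing the opposite-corners case and dispatching it via the $2$-cell property of the bigon rather than by primeness of a closure, since neither closure of $T_i$ directly joins a pair of diagonally opposite boundary points and so no loop-at-$c_1$ argument is available. Once both cases have been handled, straddling bigons are ruled out, the bijection of twist regions is immediate, and the stated equality $tw(D(L)) = \sum_{i=1}^n tw(T_i)$ follows.
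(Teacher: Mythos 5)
Your overall strategy is the same as the paper's: both proofs first note that the Conway sum introduces no new crossings (so no new twist regions), and then rule out a twist region straddling two tangles by contradicting primeness of one of the closures. The paper does this by producing a simple closed curve meeting $D(L)$ in four points and arguing that its restriction to the square of $T_i$ exhibits a non-prime numerator or denominator closure; your adjacent-corners case is the same argument packaged as a nugatory crossing of $K_{iN}$ or $K_{iD}$, and your reduction to a ``straddling bigon'' with an explicit case analysis on corners is, if anything, more careful than the paper's. (One small point: for the nugatory crossing to contradict primeness of the closure you should observe that a strongly alternating tangle has at least two crossings, so the nugatory loop at $c_1$ has crossings on both sides once it is pushed off the crossing.)

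The one step that does not hold up is the justification of your opposite-corners case. The two edges of a bigon always emanate from a crossing as \emph{adjacent} half-strands --- they bound one of the four local quadrants at $c_1$, since the bigon is a face of the diagram --- so they never ``lie on a single strand passing through $c_1$,'' and exiting the square at diagonal corners does nothing to change this; the implication ``diagonal corners $\Rightarrow$ the two half-strands at $c_1$ are opposite'' is unsupported and false. The case is nevertheless impossible, and for essentially the reason you gesture at, namely the $2$-cell property of $B$: if the two bigon edges leave $T_i$ at diagonally opposite corners $P_1,P_2$, then $B\cap T_i$ is bounded by the two edges together with an arc of the Conway sphere from $P_1$ to $P_2$, and either such arc contains one of the remaining two corner points; the strand of $T_i$ passing through that corner would then lie in the interior of the face $B$, a contradiction. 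With that repair the case analysis is complete and the lemma follows as you describe.
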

 
 \begin{proof}
First notice that taking the sum of tangles will not result in new twist regions. This is because, when taking a Conway sum, crossings that shared a twist region will still share a twist region and we introduce no new crossings. Therefore, $tw(D(L)) \leq \sum_{i=1}^n tw(T_i)$. \\
\indent
Now assume that $ tw(D(L)) < \sum_{i=1}^n tw(T_i)$. Then for some $i$, a twist region in $T_i$ and a twist region in $T_{i+1}$ become one region in $L$. This implies there exists a simple closed curve $\gamma$ which transversely intersects $D(L)$ twice in $T_i$ and twice in $T_{i+1}$. If we think back to $T_i$ lying in a unit square, then $\gamma$ must intersect the north and south edges or the east and west edges of the square. In the first case, this shows that the denominator closure is not prime, and the second, the numerator closure is not prime. But as $T_i$ is strongly alternating, this would be a contradiction, therefore $tw(D(L)) = \sum_{i=1}^n tw(T_i)$.  
 \end{proof}
 Next we consider the relationship between the twist number of a tangle diagram and the twist numbers of diagrams of its closures. 
 \begin{lemma}
 \label{lem:closuretwist}
     Let $T$ be a strongly alternating tangle diagram, and let $D(K)$ be the link diagram which comes from the numerator or denominator closure. Then:
     $$tw(T) - 2 \leq tw(D(K)) \leq tw(T).$$
 \end{lemma}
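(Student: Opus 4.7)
The plan is to compare twist regions of the tangle diagram $T$ to twist regions of its closure $D(K)$, using the fact that passing to $D(K)$ introduces no new crossings and only appends the two closure arcs.

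For the upper bound, every bigon of $T$ persists as a bigon of $D(K)$: such a bigon is a face bounded by arcs entirely inside the tangle box, and the closure arcs can be drawn outside the box so they do not intrude on it. Consequently, crossings that share a twist region of $T$ still share a twist region of $D(K)$, so every twist region of $D(K)$ is a union of twist regions of $T$. This yields $tw(D(K)) \leq tw(T)$.

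For the lower bound, I would analyze the bigons of $D(K)$ that are new, meaning those not already present in $T$. Any such bigon must have an edge that uses one of the two closure arcs, since otherwise both of its edges would lie inside the tangle box and it would already be a bigon of $T$. The two closure arcs have disjoint endpoints (NW, NE versus SW, SE), so a single new bigon can use at most one closure arc, and each closure arc is adjacent to exactly one face on its interior side; thus each closure arc contributes at most one new bigon, and $D(K)$ has at most two new bigons beyond those of $T$.

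I would then build a graph $G$ whose vertices are the twist regions of $T$ and whose edges are the new bigons, each joining the two twist regions of $T$ that contain the corner crossings of that bigon (loops when the corners coincide). Twist regions of $D(K)$ correspond bijectively to connected components of $G$ together with isolated vertices, so the drop in twist number equals the number of incident vertices minus the number of non-trivial components. With at most two edges, a quick case analysis (two disjoint edges; a path of length two; parallel edges; loops) shows this drop is at most $2$, giving $tw(D(K)) \geq tw(T) - 2$.

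The step I expect to be the main obstacle is cleanly justifying that each closure arc bounds at most one new bigon on its interior side. This rules out pathologies such as a closure arc running parallel to an arc of $T$ that would produce several nested bigons, or both closure arcs bounding the same bigon; these are prevented by working with a reduced alternating diagram, which is automatic under the strongly alternating hypothesis on $T$.
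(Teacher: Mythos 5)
Your proof is correct and follows essentially the same route as the paper's: the upper bound holds because closing adds no crossings, and the lower bound because the closure creates at most two new bigon faces, each of which can merge at most two twist regions of $T$ into one. Your bookkeeping is more detailed than the paper's (which simply notes the ``two new potential bigons'' and that each can reduce the count by one); the only point both treatments gloss over is the face on the exterior side of the closure arcs, which is adjacent to \emph{both} of them, but for a strongly alternating tangle that face cannot be a bigon since that would force a nugatory crossing in the opposite closure.
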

 \begin{proof}
     The upperbound is true as we are not adding crossings when closing a tangle, and hence cannot create new twist regions.\\
     \indent
     The lower bound stems from the fact that, when we choose a closure for $T$ we create two new potential bigons. If either region is a bigon, then it joins two twist regions. If both regions are bigons, the twist number is reduced by 2. In Figure~\ref{fig:twistred} we see an example of a tangle where the lower bound is sharp for both closures. 
     \begin{figure}[h]
         \centering
         \includegraphics[width=2.5in]{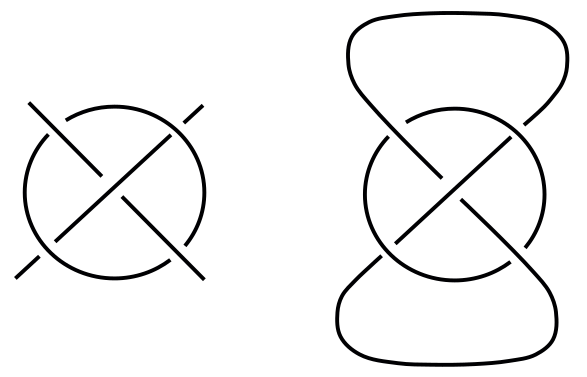}
         \caption{Left: A strongly alternating tangle with 5 twist regions. Right: The numerator closure with 3 twist regions. The denominator also results in 3 twist regions, showing the sharpness of the lower bound.}
         \label{fig:twistred}
     \end{figure}
 \end{proof}
 We will also need Theorem 3.8 from ~\cite{LEE} which we state here. This Theorem allows us to relate the cross cap numbers of the closures of strongly alternating tangles to twist numbers of their diagrams.  
 \begin{theorem}
 \label{lem:EfLee2}
  Let $L \subset S^3$ be a link of $k_L$ components with a prime, twist-reduced, alternating diagram $D(L)$. Suppose that $D(L)$ has $tw(D(L)) \geq 2$ twist regions. Let $C(L)$  denote the crosscap number of $L$. We have
 $$\left\lceil\frac{tw(D(L))}{3} \right\rceil + 2 - k_L \leq C(L) \leq tw(D(L)) + 2 - k_L$$
  Furthermore, both bounds are sharp.
 \end{theorem}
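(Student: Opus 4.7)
My plan is to prove the two inequalities by separate arguments: an explicit construction for the upper bound, and an essentiality/guts argument for the lower bound.

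For the upper bound, I would exhibit a non-orientable spanning surface $S$ realizing the bound. Given the twist-reduced alternating diagram $D(L)$ with $t = tw(D(L))$ twist regions, my plan is to start from a checkerboard surface of $D(L)$ and, in each twist region $R_i$ with $n_i$ crossings, replace the accumulated contribution of $n_i$ twist bands by a single crosscap (M\"obius) band running through the twist region. A crosscap band has $\chi = 0$, so this trades $n_i$ bands contributing $-n_i$ for one band contributing $0$, saving $n_i$ units per twist region at the cost of collapsing the adjacent bigon faces. A careful Euler-characteristic accounting shows that the resulting connected non-orientable spanning surface satisfies $\chi(S) = -t$, so $C(L) \leq C(S) = 2 - \chi(S) - k_L = t + 2 - k_L$. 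Non-orientability is automatic as soon as at least one crosscap band is inserted, which is guaranteed by the hypothesis $t \geq 2$.

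For the lower bound, start from a non-orientable spanning surface $S$ realizing $C(L)$. If $S$ is not essential (incompressible and meridianally boundary-incompressible), perform successive compressions and meridianal boundary compressions. Each such operation weakly increases $\chi$ and therefore weakly decreases $C$, so we may replace $S$ by an essential spanning surface with $C(S) \leq C(L)$; by Lemma~\ref{lem:AdamsKindred} the result is connected. Now apply a structure theorem for essential surfaces in alternating link complements: by a Menasco-style isotopy of $S$ into crossing-band form (Lemma~\ref{lem:crossingband}) and an analysis of the guts of $S^3 \setminus (S \cup L)$, one obtains $-\chi(S) \geq \lceil t/3 \rceil - k_L$, and rearranging yields $C(L) \geq \lceil tw(D(L))/3 \rceil + 2 - k_L$.

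The main obstacle is establishing the sharp factor of $1/3$ in the lower bound. This requires a quantitative refinement of the Menasco--Thistlethwaite analysis: a twist region of $n$ crossings must force any essential spanning surface either to meet the corresponding Menasco balls in enough crossing bands, or else to develop extra topology, and the bookkeeping must yield a combinatorial ratio of at most three crossings absorbed per crosscap. Verifying this ratio cleanly -- ruling out cancellations, spurious closed components, and subtle boundary compressions that could inflate $\chi$ -- is the technical heart of the argument. Sharpness, as asserted in the theorem, serves as a consistency check: the upper bound is saturated by the explicit construction above, and the lower bound is saturated by alternating families such as pretzel links whose twist regions all contain exactly three crossings, on which the $1/3$ factor is realized.
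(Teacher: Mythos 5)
A point of order first: the paper does not prove this statement at all. It is quoted verbatim as Theorem 3.8 of Kalfagianni--Lee \cite{LEE} and used as a black box, so there is no in-paper proof to compare yours against; your proposal has to be judged on its own terms, and on those terms it is a plan with two genuine gaps rather than a proof. For the upper bound, the Euler characteristic bookkeeping does not work from a single checkerboard surface: such a surface contributes $-1$ to $\chi$ for each twist region whose bigons are of the chosen colour, but $-n_i$ for each twist region whose bigons are of the opposite colour, so ``collapse each twist region to one band'' requires mixing the two resolutions region by region (this is exactly the layered/state surface construction of Adams--Kindred), not an isotopy of one checkerboard surface. More seriously, your claim that non-orientability is automatic once a crosscap band is inserted is false: a twist region with an even number of crossings collapses to a band with an even number of half-twists, which is orientable, so a diagram all of whose twist regions are even can yield an orientable surface under your construction. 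Handling that case without losing the constant $+2$ is precisely where the care lies.

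For the lower bound, the inequality you target, $-\chi(S) \geq \lceil tw(D(L))/3 \rceil - k_L$, does not imply the stated bound: substituting into $C(L) = 2 - \chi(S) - k_L$ gives only $C(L) \geq \lceil tw(D(L))/3 \rceil + 2 - 2k_L$. What you actually need is $-\chi(S) \geq \lceil tw(D(L))/3 \rceil$. Beyond that arithmetic slip, the entire mathematical content of the lower bound is the step you explicitly defer --- the quantitative claim that an essential spanning surface absorbs at most three twist regions per unit of $-\chi$ --- so nothing is established. There is also an unaddressed subtlety: compressions and meridianal boundary compressions of the non-orientable crosscap-realizing surface may produce an \emph{orientable} essential surface, so a bound on $-\chi$ of essential surfaces does not immediately transfer back to $C(L)$; one needs an additional comparison between the non-orientable and orientable maximal Euler characteristics (in \cite{LEE} this is mediated by the Adams--Kindred algorithm together with Clark's inequality $C(L) \leq 2g(L)+1$). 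I would recommend either citing the theorem, as this paper does, or rebuilding the argument around the Adams--Kindred layered surfaces from the start.
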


 Now that we have that the twist number is additive for strongly alternating tangles by Lemma~\ref{lem:twistnum} and can relate $C(K_i)$ and $tw(T_i)$ by Lemma~\ref{lem:closuretwist} and Theorem~\ref{lem:EfLee2}, we are ready to state Theorem~\ref{twistmain}
 
 \begin{theorem}
 \label{twistmain}
  Let $T_1$ and $T_2$ be diagrams of non-splittable, strongly alternating, twist-reduced tangles whose Conway sum is a link diagram $D(L)$. Let $C(L)$ be the crosscap number of $L$, $tw(D(L))$ be the twist number of $D(L)$ and $k_L$ be the number of link components in $L$ then
 $$\left\lceil\frac{tw(D(L))}{3} \right\rceil - k_L \leq C(L) \leq tw(D(L)) + 4 - k_L.$$
 
 \end{theorem}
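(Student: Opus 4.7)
The plan is to chain together the results of this section: Theorem~\ref{thm:CrossMain} bounds $C(L)$ in terms of crosscap numbers of the tangle closures, Theorem~\ref{lem:EfLee2} converts these into twist numbers of closure diagrams, Lemma~\ref{lem:closuretwist} passes from those to $tw(T_1)$ and $tw(T_2)$, and Lemma~\ref{lem:twistnum} sums these to $tw(D(L))$. The key delicacy is to track the link-component discrepancy $\epsilon \in \{0,1,2\}$ from Lemma~\ref{lem:linkcomp} carefully so that it cancels in both directions; if one immediately uses the worst-case $\epsilon$, the upper constant degrades from $+4$ to $+6$.

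For the upper bound I would not invoke Lemma~\ref{lem:upperbd} as a black box, but rather use its proof, which actually gives $C(L) \le C(K_{1N}) + C(K_{2N}) + \epsilon_N$ with $\epsilon_N = k_{K_{1N}} + k_{K_{2N}} - k_L$. Applying the upper bound from Theorem~\ref{lem:EfLee2} to each $C(K_{iN})$ and then Lemma~\ref{lem:closuretwist} gives $C(K_{iN}) \le tw(T_i) + 2 - k_{K_{iN}}$, and summing these together with Lemma~\ref{lem:twistnum} makes the two copies of $\epsilon_N$ cancel, leaving exactly $C(L) \le tw(D(L)) + 4 - k_L$.

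For the lower bound I would likewise unpack the proof of Lemma~\ref{lem:lowerbd}, which yields the refined inequality $C(L) \ge C(K_1) + C(K_2) + 2n - t + \epsilon$, where $n$ counts closed curves in $S \cap \Sigma$ and $t \in \{0,1,2\}$ records how many of the pieces $S_i$ require a half-twist band to be non-orientable. Applying the lower bound of Theorem~\ref{lem:EfLee2}, the subadditivity of the ceiling function $\lceil a/3 \rceil + \lceil b/3 \rceil \ge \lceil (a+b)/3 \rceil$, Lemma~\ref{lem:closuretwist} in the form $tw(D(K_1)) + tw(D(K_2)) \ge tw(D(L)) - 4$, and the elementary estimate $\lceil (x-4)/3 \rceil \ge \lceil x/3 \rceil - 2$, the $\epsilon$ once again cancels, and the remaining $2n - t \ge -2$ then yields $C(L) \ge \lceil tw(D(L))/3 \rceil - k_L$. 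The main obstacle is the bookkeeping flagged above; a secondary concern is the hypothesis $tw(D(K_i)) \ge 2$ of Theorem~\ref{lem:EfLee2}, which is not automatic from the tangles being strongly alternating, so the small-twist cases should be either ruled out by an extra hypothesis on $tw(D(L))$ or handled by a short separate argument.
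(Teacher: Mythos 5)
Your proposal is correct and follows essentially the same route as the paper: both directions chain Theorem~\ref{thm:CrossMain} (keeping the $\epsilon$ from Lemma~\ref{lem:linkcomp} explicit so it cancels against $k_1+k_2=k_L+\epsilon$), Theorem~\ref{lem:EfLee2}, Lemma~\ref{lem:closuretwist}, and Lemma~\ref{lem:twistnum}, with the same ceiling-function estimates. The one caveat you flag, the hypothesis $tw(D(K_i))\geq 2$, is dispatched in the paper by the observation that a strongly alternating tangle with twist number $1$ would have a non-prime closure, so no extra hypothesis is needed.
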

 \begin{proof}
     We start with a lower bound in the proof of Lemma~\ref{lem:lowerbd} with ambiguity on $\epsilon$ where $k_1 + k_2 - \epsilon = k_L$. Then $C(K_1) + C(K_2) - 2 + \epsilon \leq C(L)$. Let $D(K_1)$ and $D(K_2)$ be the diagrams of $K_1$ and $K_2$ that arise from cutting $L$ as in Theorem~\ref{thm:CrossMain}. Notice that $tw(D(K_i)) \geq 2$ for $i=1,2$ as $T_i$ is strongly alternating, and tangle diagrams with twist number 1 will have a non prime closure. Then by Lemma~\ref{lem:EfLee2} we find for $i \in \{1,2\}$ that $\left\lceil\frac{tw(D(K_i))}{3} \right\rceil + 2 - k_i \leq C(K_i) $ where $K_i$ has $k_i$ link components. So: $$\left\lceil\frac{tw(D(K_1))}{3} \right\rceil + \left\lceil\frac{tw(D(K_2))}{3} \right\rceil + 2 + \epsilon - k_1 - k_2 \leq C(L).$$ By Lemma~\ref{lem:closuretwist} $tw(T_i) - 2 \leq tw(D(K_i))$ and from Lemma~\ref{lem:twistnum} we know $tw(T_1) + tw(T_2) = tw(D(L))$, combining the two lemmas shows  
     \begin{align}
         \left\lceil\frac{tw(D(L))}{3} \right\rceil - 2 &\leq \left\lceil\frac{tw(T_1)-2}{3} \right\rceil + \left\lceil\frac{tw(T_2)-2}{3} \right\rceil \\
         &\leq \left\lceil\frac{tw(D(K_1))}{3} \right\rceil + \left\lceil\frac{tw(D(K_2))}{3} \right\rceil.
     \end{align}
     Finally substituting in $k_1 + k_2 = k_l + \epsilon$, we find: $$\left\lceil\frac{tw(D(L))}{3} \right\rceil - k_L \leq C(L).$$

     Now we consider the upper bound. Similar to the lower bound we start with a step from Lemma~\ref{lem:upperbd}, $C(L)\leq \text{min\{} C(K_{1N}) + C(K_{2N}) + \epsilon, C(K_{1D}) + C(K_{2D}) + \epsilon\} $. By Lemma~\ref{lem:closuretwist} we see that $tw(D(K_{iN})) \leq tw(T_i)$ and $tw(D(K_{iD})) \leq tw(T_i)$ and then by Theorem~\ref{lem:EfLee2}, $$C(L) \leq tw(T_1) + tw(T_2) + 4 + \epsilon - k_1 - k_2.$$ Then substituting for $k_1 + k_2 = k_L + \epsilon$ and considering Lemma~\ref{lem:twistnum},
     $$C(L) \leq tw(D(L)) + 4 -  k_L.$$
     
     Hence, showing the claim. 
 \end{proof}
 \subsection{Jones Polynomial Bounds}
  From here we work to find bounds in terms of $T_L$, but first we have to generalize Theorem 1.6 in ~\cite{Twist} which will allow us to relate the twist number of the diagram of a link $L$ to $T_L$. Theorem 1.6 from ~\cite{Twist} only considers knots but we want a similar result for links. We start with some necessary definitions and then a generalization of Lemma 5.4 from ~\cite{Twist} which is a necessary piece of our generalization of Theorem 1.6.
  \begin{definition}
  If we let $D(L)$ be the diagram of a link $L$ we define the \textit{A resolution} and \textit{B resolution} as shown Figure~\ref{fig:resolves}. Then a \textit{Kauffman state} is a choice of resolutions for each crossing in a link diagram.
  \end{definition}
\begin{figure}[h]
    \center
    \includegraphics[width=6cm]{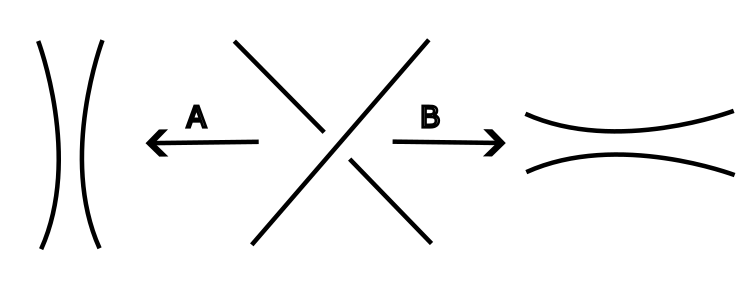}
    \caption{Given a crossing, we can resolve it to either the A or B resolution.}
    \label{fig:resolves}
\end{figure}
\begin{definition}
 Next we construct a \textit{all A (resp. all B) state graph} $G_A(D(L))$ (resp. $G_B(D(L))$) by adding edges where we performed resolutions and then contracting the simple closed curves to vertices. Let $e_A$ (resp. $e_B$) be the number of edges in $G_A(D(L))$ (resp. $G_B(D(L))$). Further, if we identify all parallel edges (edges which share two vertices) we find the \textit{reduced state graph} $G_A'(D(L))$ (resp. $G_B'(D(L))$). Let $e_A'$ (resp. $e_B')$ be the number of edges in $G_A'(D(L))$ (resp. $G_B'(D(L))$).  
\end{definition}
Now we are ready to define what it means for a link to be adequate.
\begin{definition}
We call a link diagram \textit{A-adequate} (resp. \textit{B-adequate}) if the A state (resp. B state) graph of the diagram has no one edge loops. A link diagram is called \textit{adequate} if it is both A-adequate and B-adequate, and a link is adequate if it has a diagram which is adequate. 
\end{definition}
Here we recall some terminology from ~\cite{Twist}.
  \begin{definition}
     Let $D(L)$ be the link diagram obtained by taking the Conway sum of strongly alternating tangles $T_1,...,T_n$. Let $\ell_{\text{in}}(D(L))$ denote the loss of edges in $G_A(D(L))$ and $G_B(D(L))$ as we pass from $e_A + e_B$ to $e_A' + e_B'$ which come from equivalent crossings in the same tangle $T_i$. Then let $\ell_{\text{ext}}(D(L))$ be the number of edges we lose from identification when we take the Conway sum. It follows that $\ell_{\text{in}}(D(L)) + \ell_{\text{ext}}(D(L) = e_A + e_B - e_A' - e_B'$.\\
     \indent
     For an alternating tangle diagram $T$, notice that the vertices of $G_A(T)$ and $G_B(T)$ are in 1-1 correspondence with the regions of $T$. Note that for the state graph of a tangle, if we consider the tangle lying within a disk, we have four exterior regions bounded by the disk. This means our state graphs have \textit{interior vertices} those whose region lie entirely within the interior of the disk and two \textit{exterior vertices} with corresponding region, with sides on the boundary of the disk.\\
     \indent
     For a tangle $T_i$, a \textit{bridge} of $G_A(T_i)$ or $G_B(T_i)$ is a subgraph consisting of an interior vertex $v$, and edges $e',e''$ which connect $v$ to the exterior vertices $v'$ and $v''$. We call the bridge \textit{inadmissable} if the vertices become identified in $G_A(D(L))$ or $G_B(D(L))$.
 \end{definition}

 Now we find an upper bound for $\ell_{\text{ext}}(D(L))$ in terms of the twist number. This work will largely follow the proof of Lemma 5.4 in ~\cite{Twist}. 
 \begin{lemma}
    \label{lem:lext}
     Let $T_1$ and $T_2$ be strongly alternating tangles whose Conway sum is a link diagram $D(L)$. Let $k_L$ be number of link components in $L$. Then:
     $$\ell_{\text{ext}}(D(L))  \leq \frac{tw(D(L))}{2} + k_L + 4$$
 \end{lemma}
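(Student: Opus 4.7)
The plan is to adapt the proof of Lemma~5.4 of \cite{Twist}, which establishes the analogous bound in the knot case, to the link setting; the new $k_L$ term should appear as an adjustment coming from extra closed state circles arising when $L$ has more than one component.

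First, I would identify exactly where external edge losses can occur. Because $T_1$ and $T_2$ are alternating, each of $G_A(T_i)$ and $G_B(T_i)$ has exactly two exterior vertices (corresponding to the two pairs of ``same colored'' boundary regions). When the Conway sum is formed, these exterior vertices are identified pairwise with those of the other tangle, yielding two vertices in $G_A(D(L))$ and two in $G_B(D(L))$. An edge is lost under the reduction $G_A \to G_A'$ (resp.\ $G_B \to G_B'$) only when it becomes parallel to another edge as a consequence of this identification. The two possible sources are: (i) the two edges of an inadmissible bridge in $G_A(T_i)$ or $G_B(T_i)$ collapse to a single reduced edge, so each such bridge contributes exactly one lost edge; and (ii) an edge joining the two exterior vertices of $T_1$ becomes parallel to such an edge from $T_2$ in $G_A(D(L))$ or $G_B(D(L))$.

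Next, I would bound each contribution separately. Following the strategy of \cite{Twist}, the interior vertex of an inadmissible bridge in $G_A(T_i)$ corresponds to a state circle that, by the strongly alternating hypothesis on $T_i$, must touch at least two crossings lying in a twist region adjacent to the boundary. Charging these bridges to the twist regions they meet gives a per-tangle bound of the form $b_A(T_i) + b_B(T_i) \le tw(T_i)/2 + O(1)$. Summing over $i=1,2$ and applying the additivity of the twist number from Lemma~\ref{lem:twistnum}, $tw(T_1)+tw(T_2) = tw(D(L))$, the inadmissible bridge contribution is bounded by $tw(D(L))/2$ plus a constant. The contribution of source (ii) is bounded by a constant as well: because the closures of each $T_i$ are prime, the number of edges directly between the two exterior vertices in $G_A(T_i)$ and in $G_B(T_i)$ is uniformly bounded, so only a fixed number of cross-tangle parallel pairs can arise.

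Finally, the dependence on $k_L$ would enter by tracking closed state circles coming from link components that sit inside one tangle or pass through $\Sigma$ without contributing crossings. Each such component can introduce at most one extra ``trivial'' bridge or one extra parallel-edge pair in (ii), so their total contribution is bounded by $k_L$. Combining all three bounds gives the stated inequality. The main obstacle I anticipate is pinning down the precise additive constant $k_L + 4$: this requires a careful case analysis of how the two A-exterior vertices of $T_1$ are matched with those of $T_2$ (which depends on the strand-connection pattern induced by the Conway sum) and of how link components of $L$ distribute across the Conway sphere, since these two data together determine the exact number of bridges that become inadmissible.
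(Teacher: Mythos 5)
Your overall strategy matches the paper's: both adapt Lemma~5.4 of \cite{Twist} by bounding the contribution of each tangle to $\ell_{\text{ext}}$ by its number of bridges $b_A(T)+b_B(T)$, charging bridges to twist regions, and then invoking the additivity of the twist number over the Conway sum (Lemma~\ref{lem:twistnum}) together with $k_1+k_2\le k_L+2$ (Lemma~\ref{lem:linkcomp}). However, the step you defer as ``pinning down the precise additive constant'' is exactly where the content of the lemma lives, and the mechanism you propose for the $k_L$ term is not the one that works. The paper's point is this: a bridge whose two edges come from the resolutions of a \emph{single} two-crossing twist region (a ``type ii'' bridge) cannot be charged to $tw(T)/2$ at all. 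In the knot case of \cite{Twist} there is at most one such bridge, which is why the constant there does not involve $k$; in the link case there can be many, but each type ii bridge beyond the first forces a new link component, so their number is bounded by $k_T$. Your accounting instead attributes the $k_L$ term to ``closed state circles coming from link components that sit inside one tangle or pass through $\Sigma$ without contributing crossings,'' which does not identify the right objects and would not let you conclude a bound of the form $b_A(T)+b_B(T)\le tw(T)/2+k_T+1$.

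The other missing ingredient is the planarity-based case analysis on the pair $(b_A(T), b_B(T))$ that produces the final ``$+1$'' per tangle: if $b_A(T)\ge 3$ then $b_B(T)=0$ (an A-bridge and a B-bridge would have to cross, forcing two interior vertices), giving $b_A+b_B\le tw(T)/2+k_T$; if $b_A(T)=b_B(T)=2$ the four bridges bound a square that creates a second tangle component and there are at least two twist regions, giving $b_A+b_B\le tw(T)/2+k_T+1$; and the remaining low cases give the same bound. Summing over the two tangles then yields $\ell_{\text{ext}}\le tw(D(L))/2+(k_1+k_2)+2\le tw(D(L))/2+k_L+4$. Without this case analysis your ``$O(1)$'' cannot be converted into the stated inequality, so as written the proposal has a genuine gap precisely at the step you flagged. (Your ``source (ii)''---edges running directly between the two exterior vertices of each tangle becoming parallel across the sum---is a legitimate thing to worry about; the paper absorbs all non-bridge losses into $\ell_{\text{in}}$ and does not treat it separately, so raising it is fine, but it is not where the $k_L$ dependence comes from.)
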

 \begin{proof}
     For $T \in \{T_1, T_2 \}$ let $b_A(T)$, $b_B(T)$ be the number of bridges in $G_A(T)$ and $G_B(T)$ respectively. Then the contribution of $T$ to $\ell_{\text{ext}}$ will be at most $b_A(T) + b_B(T)$. Any other edge identification from moving to the reduced graph will still be counted by $\ell_{\text{int}}$.\\
     \indent
     If $b$ is a bridge there are two possibilities:
     \begin{enumerate}[i.]
         \item The edges $e', e''$ do not come from the resolutions of a single twist region. 
         \item The edges $e', e''$ come from the resolutions of a single twist region. 
     \end{enumerate}
     Notice that for type ii bridges the two crossings which result in the edges are the only two in their respective twist region. Otherwise the two edges will not be adjacent to the exterior vertices or this would no longer constitute a twist region. \\
     \indent
     For type i bridges notice that when we pass from $G_A(T)$ and $G_B(T)$ to $G_A'(T)$ and $G_B'(T)$ the contributions to $\ell_{\text{ext}}$ is half the number of twist regions involved in such bridges. Unlike in ~\cite{Twist} we can have more than one type ii bridge, as each additional type ii bridges creates a new link component. \\
     \indent 
     \textit{Case 1:} Suppose that $b_A(T) \geq 3$ or $b_B(T) \geq 3$. Without loss of generality let $b_A(T) \geq 3$, then $b_B(T) = 0$. If $b_B(T)$ were not zero then the $B$ state bridge would cross the $A$ state bridges, implying two internal vertices  which is not a bridge. \\
     \indent
     There can be any number of type ii bridges, but we notice each bridge beyond the first will add a new link component. If we have only type ii bridges $b_A(T) \leq k_T$ where $k_T$ is the number of tangle components. On the other hand if we only have type i bridges $b_A(T) \leq \frac{tw(T)}{2}$. Then for any mix of bridges we find that $b_A(T) + b_B(T) \leq \frac{tw(T)}{2} + k_T$.\\
     \indent
     \textit{Case 2:} In this case we will consider $b_A(T) = b_B(T) = 2$. Then $k$ must be at least 2, as the bridges in $G_A(T)$ and $G_B(T)$ will create a square resulting in a tangle second component. Also notice that as $G_A(T)$ has two bridges there are at least two twist regions in $T$. Then $b_A(T) + b_B(T) \leq \frac{tw(T)}{2} + k_T + 1$. \\
     \indent
     \textit{Case 3:} Either $b_A(T) \leq 2$ and $b_B(T) \leq 1$ or $b_A(T) \leq 1$ and $b_B(T) \leq 2$. Without loss of generality consider the first possibility. Then $b_A(T) + b_B(T)$ is at most three. If it's less than three we see that $k_T + 1 \geq 2$ so we need only consider when they sum to three. But as with the previous case we will have at least two twist regions as $G_A(T)$ has two bridges. Thus, $b_A(T) + b_B(T) \leq \frac{tw(T)}{2} + k_T + 1$.\\
     \indent
     Then by Lemma~\ref{lem:twistnum} we know that the twist number is additive over Conway sums. By Lemma~\ref{lem:linkcomp} $k_1 + k_2 \leq k_L + 2$. Then we find the following bound;
     $$\ell_{\text{ext}} \leq \sum_{i = 1}^2 b_A(T_i) + b_B(T_i) \leq \frac{tw(D(L))}{2} + k_L + 4.$$
   
 \end{proof}
    Now we have the tools necessary to prove the main lemma needed to find bounds for $C(L)$ in terms of $T_L$.
  \begin{lemma}
    \label{lem:TLtwister}
    Let $T_1,...,T_n$ be strongly alternating tangles whose Conway sum is a link diagram $D(L)$ for a link $L$. Then letting $\beta_L$ and $\beta'_L$ be the second and second-to-last coefficients of the Jones Polynomial of $L$, $T_L = |\beta_L| + |\beta_L'|$, and $k_L$ the number of link components, we have
    $$\frac{\text{tw}(D(L))}{2} - k_L - 2 \leq T_L \leq 2\text{tw}(D(L)). $$
 \end{lemma}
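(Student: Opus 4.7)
The plan is to mirror the proof strategy of Theorem 1.6 in \cite{Twist}, with Lemma~\ref{lem:lext} providing the new ingredient needed to pass from knots to links. The argument naturally splits into an algebraic identity expressing $T_L$ in terms of reduced state-graph edge counts, followed by combinatorial estimates bounding those edge counts in terms of $tw(D(L))$.

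First I would verify that $D(L)$ is both $A$-adequate and $B$-adequate. Because each $T_i$ is strongly alternating, neither $G_A(D(L))$ nor $G_B(D(L))$ contains a one-edge loop, so the Dasbach--Lin formulas for the extremal Jones coefficients of adequate links apply and each of $|\beta_L|$ and $|\beta_L'|$ can be written as $1 - \chi$ of the corresponding reduced state graph. Adding them and using $\chi = v - e$ on each reduced graph, together with the identity $v_A + v_B = c(D(L)) + 2$ coming from the alternating count on each tangle (which is preserved under Conway summing), yields the identity
$$T_L = e_A'(D(L)) + e_B'(D(L)) - c(D(L)).$$

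Next I would analyze the decomposition $(e_A + e_B) - (e_A' + e_B') = \ell_{\text{in}}(D(L)) + \ell_{\text{ext}}(D(L))$. Inside a single strongly alternating tangle, one resolution of a twist region with $k$ crossings produces $k$ mutually parallel edges while the other produces a chain of $k$ non-parallel edges; summing over the twist regions of all the tangles gives $\ell_{\text{in}}(D(L)) = c(D(L)) - tw(D(L))$. Combined with $e_A + e_B = 2c(D(L))$ and the previous display, this yields the clean identity
$$T_L = tw(D(L)) - \ell_{\text{ext}}(D(L)) + \delta,$$
where $\delta$ is a small correction coming from the exterior vertices at the Conway-sphere interface. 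The upper bound $T_L \leq 2\,tw(D(L))$ then follows from the basic estimate $\ell_{\text{ext}}(D(L)) \geq -tw(D(L))$ together with $\delta \leq tw(D(L))$. The lower bound follows by substituting the estimate of Lemma~\ref{lem:lext} and then observing, through a case-by-case analysis of the extremal bridge configurations in the proof of that lemma, that achieving the maximal $k_L + 4$ loss simultaneously forces $\delta \geq 2$; the two endpoint constants then combine to give $\tfrac{tw(D(L))}{2} - k_L - 2 \leq T_L$.

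The main obstacle I anticipate is the precise bookkeeping of the additive constants. The skeleton identity $T_L \approx tw(D(L)) - \ell_{\text{ext}}(D(L))$ is clean, but each intermediate step carries small corrections (from the connectedness hypothesis on the reduced state graphs, from the exterior vertices at the Conway-sphere interface, and from the case split on link components inside the proof of Lemma~\ref{lem:lext}) that must all be tracked together, so the stated $-k_L - 2$ drops out exactly rather than as some off-by-one variant.
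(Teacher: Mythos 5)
Your overall skeleton---adequacy of $D(L)$, the Stoimenow/Dasbach--Lin formula expressing the extremal Jones coefficients through the reduced state graphs, the decomposition of the edge loss into $\ell_{\text{in}}(D(L)) = c - tw(D(L))$ plus $\ell_{\text{ext}}(D(L))$, and Lemma~\ref{lem:lext} for the lower bound---matches the paper's. But there is a genuine error in your vertex count, which you then try to repair with an undefined correction term. You assert $v_A + v_B = c(D(L)) + 2$ and claim this is ``preserved under Conway summing''; that identity holds only for alternating diagrams. The paper first mutates $L$ (using mutation-invariance of the Jones polynomial) into either an alternating diagram, where Dasbach--Lin give $T_L = tw(D(L))$ outright, or a sum of one positive and one negative strongly alternating tangle; in the latter, non-alternating adequate case, Propositions 1 and 5 of Lickorish--Thistlethwaite give $v_A + v_B = c(D(L))$: state circles of the two tangles merge across the Conway sphere and the count drops by exactly $2$ from the alternating value. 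With the correct count, $T_L = e_A' + e_B' - v_A - v_B + 2$ collapses to the identity $T_L = tw(D(L)) - \ell_{\text{ext}}(D(L)) + 2$, and substituting Lemma~\ref{lem:lext} immediately yields $T_L \geq \tfrac{tw(D(L))}{2} - k_L - 2$. Your version instead produces $T_L = e_A' + e_B' - c = tw(D(L)) - \ell_{\text{ext}}(D(L))$, which is short by $2$ and only gives $T_L \geq \tfrac{tw(D(L))}{2} - k_L - 4$. The patch---that ``achieving the maximal $k_L + 4$ loss simultaneously forces $\delta \geq 2$''---is not a proof: $\delta$ is never defined, and nothing in the case analysis of Lemma~\ref{lem:lext} couples the extremal value of $\ell_{\text{ext}}$ to a compensating $+2$ elsewhere in the formula.

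Two smaller points. You omit the alternating case entirely, yet it is exactly the case where your $v_A + v_B = c + 2$ would be correct, so it must be split off and handled separately (the paper disposes of it via the mutation step before assuming non-alternating). And the upper bound $T_L \leq 2\,tw(D(L))$ should not be rederived from the $\delta$-identity; it is a known result of Futer, Kalfagianni, and Purcell that the paper simply cites.
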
 
 \begin{proof}
      We start by noting that we can mutate the link $L$ in such a way that it either is alternating or the sum of $T$ and $T'$ where $T$ is a positive 
     strongly alternating tangle and $T'$ is a negative strongly alternating tangle, without changing its Jones Polynomial~\cite{Lickorish}. Where the positive and negative refer to whether the northwest strand originates from and overcrossing or an undercrossing. In the former case we have a stronger result by Dasbach and Lin~\cite{twistJones} that $T_L = tw(L)$. \\
     \indent
     We will assume that $L$ is not alternating. Then work by Lickorish and Thistlewaite~\cite{adequate} shows that $D(L)$ is adequate. Further, by propositions 1 and 5 of ~\cite{adequate} we have $v_A + v_B = c$ where $v_A$ is the number of vertices in $G_A(D(L))$, $v_B$ the same in $G_B(D(L))$ and $c$ the number of crossings in $D(L)$. Every edge we lose when passing from $G_A(D(L))$ and $G_B(D(L))$ to $G_A(D(L))'$ and $G_B(D(L))'$ comes from either multiple edges in a twist region or an inadmissible bridge. By lemma 5.2 in ~\cite{Twist} we see that the number of edges lost due to twist regions is $c-\text{tw}(D(L))$. Work by Stoimenow shows that for an adequate link diagram (see ~\cite{twistJones} for a proof)
     \begin{align}
         T_L &= e_A' + e_B' - v_A - v_B + 2\\
         & = (e_A' + e_B' - e_A - e_B) + e_A + (e_B  - v_A - v_B) + 2\\
         & = -(c-tw(D(L)) + \ell_{ext}) + c + (c - v_A - v_B) + 2\\
         & \geq tw(D(L)) - \ell_{ext} + 2\\
         & \geq tw(D(L)) - \frac{tw(D(L))}{2} - k_L - 4 + 2 = \frac{tw(D(L))}{2} - k_L - 2.
     \end{align}
    The upper bound on $T_L$ was shown by Futer, Kalfagianni and Purcell in ~\cite{UpperBoundTwTL}.
 \end{proof}
 \begin{reptheorem}{cor:TLTwist}
      Let $T_1$ and $T_2$ be non-splittable, twist reduced, strongly alternating tangles whose Conway sum is a link $L$. If $C(L)$ is the crosscap number of $L$, $T_L = |\beta_L|+|\beta_L'|$ and $k_L$ is the number of link components in $L$ we find that,
 $$\left\lceil\frac{T_L}{6} \right\rceil - k_L \leq C(L) \leq 2T_L + k_L + 8.$$
 \end{reptheorem}
 \begin{proof}
     This follows immediately from Theorem~\ref{twistmain} and Lemma~\ref{lem:TLtwister}.
 \end{proof}
 \begin{cor}
     Let $T_1$ and $T_2$ be twist reduced, non-splittable, strongly alternating tangles whose Conway sum is a link $L$. Assume that $tw(T_i) = tw(K_iN) = tw(K_iD)$. If $C(L)$ is the crosscap number of $L$, $T_L = |\beta_L|+|\beta_L'|$ and $k_L$ is the number of link components in $L$, we have,
 $$\left\lceil\frac{T_L}{6} \right\rceil + 2 - k_L \leq C(L) \leq 2T_L + k_L + 8.$$
 \end{cor}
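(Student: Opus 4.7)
The plan is to revisit the proof of Theorem~\ref{twistmain} under the stronger hypothesis $tw(T_i) = tw(K_{iN}) = tw(K_{iD})$, which removes the $-2$ penalty coming from Lemma~\ref{lem:closuretwist} and yields a lower bound that is better by $2$. The upper bound of the corollary is identical to that of Theorem~\ref{cor:TLTwist} and needs no modification: it follows verbatim from the upper bound of Theorem~\ref{twistmain} combined with the lower bound on $T_L$ in Lemma~\ref{lem:TLtwister}.

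For the lower bound, I will start where the proof of Theorem~\ref{twistmain} starts, from the refinement $C(K_1)+C(K_2)-2+\epsilon \leq C(L)$ implicit in Lemma~\ref{lem:lowerbd} (with $k_L = k_1+k_2-\epsilon$ as in Lemma~\ref{lem:linkcomp}). Applying Lemma~\ref{lem:EfLee2} to each of the alternating closures $K_1,K_2$ used in the cut gives
\[
\left\lceil \frac{tw(D(K_1))}{3}\right\rceil + \left\lceil \frac{tw(D(K_2))}{3}\right\rceil + 2 - k_L \;\leq\; C(L).
\]
The crucial new step is to use the hypothesis $tw(D(K_i)) = tw(T_i)$, so no loss is incurred when replacing the closure's twist number by the tangle's. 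Then by Lemma~\ref{lem:twistnum} (additivity of twist number over the Conway sum of strongly alternating tangles) and the elementary inequality $\lceil a\rceil + \lceil b\rceil \geq \lceil a+b\rceil$, I obtain
\[
\left\lceil \frac{tw(D(K_1))}{3}\right\rceil + \left\lceil \frac{tw(D(K_2))}{3}\right\rceil \;\geq\; \left\lceil \frac{tw(T_1)+tw(T_2)}{3}\right\rceil = \left\lceil \frac{tw(D(L))}{3}\right\rceil,
\]
which upgrades the inequality above to $\lceil tw(D(L))/3\rceil + 2 - k_L \leq C(L)$.

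To finish, I convert $tw(D(L))$ into $T_L$ using the upper bound $T_L \leq 2\,tw(D(L))$ from Lemma~\ref{lem:TLtwister}, which gives $\lceil tw(D(L))/3 \rceil \geq \lceil T_L/6\rceil$, and hence $\lceil T_L/6\rceil + 2 - k_L \leq C(L)$. Combined with the unchanged upper bound $C(L) \leq 2T_L + k_L + 8$ inherited from Theorem~\ref{cor:TLTwist}, this establishes the claimed inequalities. The only new content beyond Theorem~\ref{cor:TLTwist} is the observation that the equal-twist hypothesis eliminates the slack in Lemma~\ref{lem:closuretwist}; there is no real obstacle, and the estimate with ceilings is the only step requiring care.
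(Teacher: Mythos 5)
Your proposal is correct and follows exactly the derivation the paper intends (the corollary is stated without an explicit proof, but it is plainly meant to be Theorem~\ref{cor:TLTwist} with the observation that the hypothesis $tw(T_i)=tw(K_{iN})=tw(K_{iD})$ removes the $-2$ slack from Lemma~\ref{lem:closuretwist} in the lower bound of Theorem~\ref{twistmain}). Your handling of the ceilings via $\lceil a\rceil+\lceil b\rceil\geq\lceil a+b\rceil$ and the conversion $tw(D(L))\geq T_L/2$ from Lemma~\ref{lem:TLtwister} are both sound, so nothing further is needed.
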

 \section{Generalizing to Larger Conway Sums of Tangles}
 Our goal in this section is to generalize Theorem~\ref{cor:TLTwist} to Conway sums of more than two tangles. A \textit{Conway sum} of more than 2 tangles is a closure where we connect diagrams of the tangles $T_1,T_2,...,T_l$ linearly west to east shown in Figure~\ref{fig:LargeTangle}. As with the case of the sum of two tangles, if we let $L$ be our Conway sum and $S$ a spanning surface, cutting $S$ along a Conway sphere intersecting $T_i$ will result in a spanning surface for either $K_{iN}$ or $K_{iD}$, dictating the closure for the tangle. When we cut $L$, we position $l$ Conway spheres such that $\Sigma_i$ intersects $L$ at the directional strands of $T_i$. We note that $S^3 / \bigcup_i \Sigma_i$ will not be a sphere, but we are concerned with the surfaces within the interior of each $\Sigma_i$. $S/\cup_i \Sigma_i$ will be a collection of bands and tubes which we consider in the Euler characteristic change. This section will have similar results to the previous sections but with a factor for the number of tangles. We start with the following lemma which is a generalization of Lemma~\ref{lem:linkcomp}. 
 \begin{lemma}
     \label{lem:linkcompgen}
     Let $L$ be the Conway sum of the tangles $T_1, T_2,..., T_l$, and $K_1, K_2,...,K_l$ are closures of the tangles. Then if $k_L$ is the number of link components for $L$, and $k_1, k_2,..., k_l$ the number of link components for each link respectively then $k_L = \sum_{i=1}^l k_i - l + \epsilon$ for $\epsilon=0,1,2$.
 \end{lemma}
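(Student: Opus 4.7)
My plan is to prove the lemma by induction on $l$, using Lemma~\ref{lem:linkcomp} as the base case $l=2$ (after the relabelling $\epsilon \mapsto 2-\epsilon$ to convert between the two equivalent forms of the formula). For the inductive step I would realise the cyclic Conway sum of $T_1,\ldots,T_l$ as an instance of the Conway sum of two tangles: let $T^{\ast} := T_1 + T_2 + \cdots + T_{l-1}$ denote the horizontal concatenation of the first $l-1$ tangles \emph{without} cyclic closure, itself a four-ended tangle. Then the cyclic Conway sum of $T_1,\ldots,T_l$ is precisely the Conway sum (in the sense of Lemma~\ref{lem:linkcomp}) of the two tangles $T^{\ast}$ and $T_l$, and the numerator closure of $T^{\ast}$ is exactly the link $L' := $ cyclic Conway sum of $T_1,\ldots,T_{l-1}$, to which the inductive hypothesis applies.

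Applying Lemma~\ref{lem:linkcomp} to the pair $(T^{\ast},T_l)$ yields $k_L = k_{L'} + k_l - \eta$ with $\eta \in \{0,1,2\}$, and applying the inductive hypothesis to $L'$ yields $k_{L'} = \sum_{i=1}^{l-1} k_i - (l-1) + \epsilon'$ with $\epsilon' \in \{0,1,2\}$. Substituting gives
\[
k_L = \sum_{i=1}^{l} k_i - l + (1 + \epsilon' - \eta),
\]
so the required $\epsilon$ is $1+\epsilon'-\eta$.

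A priori this lies only in $\{-1,0,1,2,3\}$, and the main obstacle of the plan is sharpening the range to $\{0,1,2\}$. To accomplish this I would re-examine the proof of Lemma~\ref{lem:linkcomp}: the value $\eta = 0$ is achieved only when neither of the two Conway bands connecting $T^{\ast}$ and $T_l$ merges components of $K^{\ast} \sqcup K_l$, which forces both bands to join arcs already on the same component; symmetrically, $\eta = 2$ forces both bands to merge distinct components. Either situation imposes structural constraints on how the four boundary arcs of $T^{\ast}$ are distributed among the components of its closure $K^{\ast} = L'$. Tracking these constraints alongside which of the three possible arc-patterns (vertical, horizontal, or diagonal) $T^{\ast}$ realises at its boundary should force the combinations $(\epsilon',\eta) \in \{(2,0),(0,2)\}$ that would produce $\epsilon \in \{-1,3\}$ to be incompatible and therefore ruled out.

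The hardest part of the plan will be carrying out this combinatorial case analysis cleanly. As a backup route, one can bypass the induction and argue directly at the level of perfect matchings on the $4l$ tangle endpoints: write $\sigma$ for the closure matching and $\rho$ for the Conway-sum matching, and observe that $\sigma \cup \rho$ consists of exactly two cycles of length $2l$ (one through the top endpoints, one through the bottom). Then $k_L - \sum_i k_i$ coincides with the difference in the number of cycles of $\pi \cup \rho$ and $\pi \cup \sigma$, where $\pi$ is the tangle-arc matching, and a cycle-merging bookkeeping argument on how $\pi$ interacts with the two $2l$-cycles of $\sigma \cup \rho$ should yield the bound $\epsilon \in \{0,1,2\}$ directly.
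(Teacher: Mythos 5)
The paper itself never writes out a proof of Lemma~\ref{lem:linkcompgen}: it is asserted as the evident generalization of Lemma~\ref{lem:linkcomp}, whose $l=2$ case is only sketched inside the proof of Lemma~\ref{lem:upperbd} (one gluing joins two disjoint links, the other changes the count by at most one). So the only benchmark is that heuristic. Your reduction of the cyclic sum to the two-tangle Conway sum of $T^{\ast}=T_1+\cdots+T_{l-1}$ and $T_l$, with the numerator closure of $T^{\ast}$ equal to $L'$, is correct, and your bookkeeping honestly yields only $\epsilon\in\{-1,0,1,2,3\}$. The genuine gap is exactly the step you flag as hardest: the sharpening to $\{0,1,2\}$ cannot be carried out, because for $l\ge 3$ the statement as written (with arbitrary, or even uniformly numerator, closures) fails, so neither the proposed case analysis on arc patterns nor the backup cycle-counting argument can deliver the bound.

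Concretely, let $T$ be a tangle with no closed components whose two strands connect NW to NE and SW to SE (strongly alternating examples exist, e.g.\ a rational tangle whose numerator closure is the $(2,4)$-torus link and whose denominator closure is a trefoil). Take $l=3$ copies and all numerator closures: each $K_i$ has $k_i=2$, while in the Conway sum the three top strands close into one component and the three bottom strands into another, so $k_L=2$ and $\epsilon=2-6+3=-1$. Rotating each tangle by $90^{\circ}$, so that the strands connect NW to SW and NE to SE, gives the opposite failure: each of the three junctions closes off into its own component, so $k_L=3$, while each numerator closure is a knot, giving $\sum_i k_i=3$ and $\epsilon=3$ (and $\epsilon=l$ for $l$ summands). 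Thus the combinations $(\epsilon',\eta)$ you hope to exclude genuinely occur; moreover your invocation of Lemma~\ref{lem:linkcomp} with a numerator closure on $T^{\ast}$ but an unrestricted closure on $T_l$ is itself beyond what the paper's sketch supports, since with mixed closure types one can already realize $k_L=k_1+k_2-3$ at $l=2$. Any correct version of the lemma has to tie the choice of closures $K_i$ to how each tangle's strands pair its four boundary points (for instance, to the closures cut out by a single connected spanning surface of $L$, which is how the statement is used in Theorem~\ref{thm:GeneralizedMain}), and a proof must exploit that restriction; without it the claimed range of $\epsilon$ is simply not true, so no amount of combinatorial bookkeeping along the lines you propose can close the gap.
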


\begin{theorem}
\label{thm:GeneralizedMain}
    Let $T_1, T_2, ... , T_l$ be non-splittable, strongly alternating tangles, and let $L$ be the Conway sum that results from the $l$ tangles. If $K_i$ is the closure of $T_i$ resulting from cutting the crosscap realizing spanning surface for $L$ for all $i \in \{1,2,...,l\}$ and $K_{iN}$ is the numerator closure of $T_i$ and $K_{iD}$ the denominator closure, then we have:
    $$\sum_{i=1}^l C(K_i) - l \leq C(L) \leq \text{min}\{\sum_{i=1}^l C(K_{iN}) + l ,\sum_{i=1}^l C(K_{iD}) +2 \}.$$
\end{theorem}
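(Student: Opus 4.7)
The plan is to prove the upper and lower bounds of Theorem~\ref{thm:GeneralizedMain} separately, generalizing the arguments of Lemma~\ref{lem:upperbd} and Lemma~\ref{lem:lowerbd} from two tangles to $l$ tangles in a cyclic Conway sum.

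For the upper bound, I would take non-orientable spanning surfaces $S_1, \ldots, S_l$ realizing $C(K_{iN})$ (and separately, those realizing $C(K_{iD})$) and construct a spanning surface $S$ for $L$ by attaching them with bands along the strands that define the Conway sum. For a cyclic arrangement of $l$ tangles there are $l$ junctions between adjacent tangles; each junction contributes a pair of bands, and each band attachment decreases $\chi$ by $1$. Substituting into $C(S) = 2 - \chi(S) - k_L$, using Lemma~\ref{lem:linkcompgen} to relate $\sum_i k_i$ and $k_L$, and extremizing over $\epsilon \in \{0, 1, 2\}$ yields the stated upper bounds.

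For the lower bound, I would begin with a non-orientable spanning surface $S$ of $L$ realizing $C(L)$, and position $l$ disjoint Conway spheres $\Sigma_1, \ldots, \Sigma_l$, one enclosing each tangle $T_i$. Since the $\Sigma_i$ are pairwise disjoint, the arguments of Lemmas~\ref{lem:nonsep}, \ref{lem:curveslemma}, and \ref{lem:closedsurfaces} apply separately to each sphere in turn, so that we may arrange $S$ to have at most one closed curve in each $\Sigma_i \cap S$ and to produce no closed surface components on cutting. Cutting $S$ along $\bigcup_i \Sigma_i$ and capping closed curves with disks then yields, inside each $\Sigma_i$, a surface $S_i$ that spans some closure $K_i$ of $T_i$; Lemma~\ref{lem:AdamsKindred} ensures that each $S_i$ is connected. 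Tracking the Euler characteristic changes ($+2l$ from cutting the $2l$ boundary arcs, $+2\sum_i n_i$ from capping the $n_i$ closed curves on each $\Sigma_i$, and $-t$ from half-twist bands added to compensate for orientable $S_i$), together with the Euler characteristic of the exterior portion of $S$ in the complement of the $l$ balls bounded by the $\Sigma_i$, and applying Lemma~\ref{lem:linkcompgen}, I extremize over the parameters to obtain $\sum_i C(K_i) - l \leq C(L)$.

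The main obstacle I anticipate is the Euler characteristic bookkeeping in the lower bound, specifically the treatment of the exterior portion of $S$ sitting between the $l$ Conway spheres. This is a feature absent from the $l = 2$ case of Lemma~\ref{lem:lowerbd}, where a single separating sphere divides $S^3$ into two balls with no additional exterior region. Controlling this contribution, which the paper describes as a collection of \emph{bands and tubes}, is the step that produces the $-l$ term in the final inequality; in the worst case I expect the exterior to consist of a disjoint collection of bands, one for each connecting arc of the cyclic Conway sum, and making this configuration argument precise is the crux of the lower bound.
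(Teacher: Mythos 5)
Your lower-bound outline follows the paper's argument essentially step for step: cut along $l$ Conway spheres, use Lemmas~\ref{lem:nonsep}--\ref{lem:closedsurfaces} and Lemma~\ref{lem:AdamsKindred} to control closed curves and connectivity, track the Euler characteristic including the exterior piece of $S$ in $S^3 \setminus \bigcup_i \Sigma_i$, and extremize. You correctly identify that exterior piece as the new feature and the source of the $-l$; the paper makes this precise by writing $\sum_i \chi(S_i) = \chi(S) + t + c - b$ with $t = l$ (exterior is $l$ bands, one per junction, in the denominator-type configuration) or $t = 2l-2$ (numerator-type), and the worst case $t = l$, $c = 0$, $b = l$, $\epsilon = 2$ gives exactly $\sum_i C(K_i) - l \leq C(L)$. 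One detail you gloss over: the complement of a single $\Sigma_i$ now contains all the other tangles, so when you run the Lemma~\ref{lem:curveslemma} argument "sphere by sphere" you must check that the modifications do not disconnect $S$ outside $\Sigma_i$; the paper handles this by observing that such a disconnection would force a disconnected spanning surface inside some other $\Sigma_j$, contradicting Lemma~\ref{lem:AdamsKindred}.

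The upper bound, however, has a genuine gap. You treat the numerator and denominator cases identically ("each junction contributes a pair of bands, each decreasing $\chi$ by $1$"), but a uniform count cannot produce the asymmetric bound $\min\{\sum_i C(K_{iN}) + l,\ \sum_i C(K_{iD}) + 2\}$: your count gives $\chi(S) = \sum_i \chi(S_i) - 2l$ and hence $C(L) \leq \sum_i C(K_{i\ast}) + l + 2$ for either closure, which is strictly weaker than both stated bounds. The point you are missing is geometric. For denominator closures the eastern closure arc of $S_i$ and the western closure arc of $S_{i+1}$ sit at the junction, so \emph{one} band per junction ($l$ bands total, $\chi(S) = \sum_i\chi(S_i) - l$) replaces both closure arcs by the two connecting strands and yields $+2$. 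For numerator closures the closure arcs of $S_i$ run along the north and south of $T_i$ and do \emph{not} meet the junction with $T_{i+1}$, so banding adjacent surfaces directly does not even produce a spanning surface of $L$ (the old closure arcs would survive in the boundary). Instead one must fill the northern and southern circles with two disks and attach each $S_i$ to each disk by a band, giving $\chi(S) = \sum_i \chi(S_i) - 2l + 2$ and the bound $+l$. Without distinguishing these two constructions you cannot recover the theorem as stated.
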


    \begin{proof}
        This proof will largely follow the work we did in Lemma~\ref{lem:lowerbd} and Lemma~\ref{lem:upperbd}. We will start by considering the upper bound. \\
        \indent
        First we consider the case where we have $K_{iN}$ for all $i \in \{1,2,....,l\}$, and spanning surfaces $S_i$ for each $K_{iN}$ such that $C(S_i) = C(K_{iN})$. Unlike in Lemma~\ref{lem:upperbd} the NW and NE strands connect to different tangles and we will find the same for the SW and SE strands. Then the spanning surface resulting from the Conway sum will have northern and southern disks attached to each of the $S_i$ by a band as seen in figure~\ref{fig:Numersurface}. Let this resulting surface be $S$.
        \begin{figure}[h]
    \center
    \includegraphics[width=8cm]{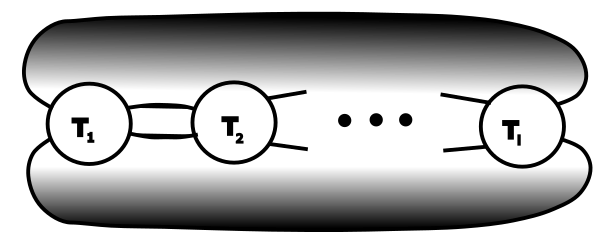}
    \caption{We see here that a surface would have to fill the shaded areas to connect the numerator closures of the tangles since the western and eastern boundaries of the $S_i$ connect to separate tangles.    }
         \label{fig:Numersurface}
        \end{figure}
        \\
        \indent  By this construction $\chi(S) = \sum_{i=1}^l \chi(S_i) - 2l + 2$ where the $2l$ comes from the bands connecting each surface to the disks, and the 2 from the disks themselves. Then by Lemma~\ref{lem:linkcompgen} we see that:
        $k_L = \sum_{i=1}^l k_{iN} - l + \epsilon$.
        Then
        $$C(S) = 2 - (\sum_{i=1}^l \chi(S_i) - 2l + 2) - (\sum_{i=1}^l k_{iN} - l + \epsilon) = \sum_{i=1}^l C(K_{iN}) + l - \epsilon.$$
        Then we see the weakest upperbound is when $\epsilon = 0$, so $$C(L) \leq C(S) = \sum_{i=1}^l C(K_{iN}) + l.$$\\
        \indent
        Meanwhile the all denominator closure case will be similar to when $l=2$. In particular $\chi(S) = \sum_{i=1}^l \chi(S_i) - l$ as we add bands to connect each of the $S_i$ to their neighboring surfaces. By Lemma~\ref{lem:linkcompgen} and similar computations to the numerator closure case we find $C(L) \leq \sum_{i=1}^l C(K_{iD}) + 2$. We have a 2 instead of an $l$ as we added half the number of bands in constructing $S$. Then we take the minimum of the denominator and numerator bounds to find an upperbound for $C(L)$. \\
        \indent
        Now we consider the lower-bound. Let $S$ be a spanning surface for $L$ such that $C(S) = C(L)$. Similar to Lemma~\ref{lem:lowerbd} we will be considering the surfaces $S_1,S_2,...,S_l$ that result from cutting along the Conway spheres $\Sigma_i$. By a similar argument to the one for Lemma~\ref{lem:curveslemma}, $S$ can be chosen so that $\Sigma_i \cap S$ contains at most one closed curve for all $i$. Notice that if any of the steps in Lemma~\ref{lem:curveslemma} were to disconnect the surface outside $\Sigma_i$, then for some other $\Sigma_j$, $i \neq j$, $T_j$ would span a disconnected surface which is a contradiction to Lemma~\ref{lem:AdamsKindred}.  Then when we cut along the Conway spheres we see that we are at most cutting along two arcs and a closed curve. \\
        \indent
        We know from Lemma~\ref{lem:linkcompgen} that $k_L = \sum_{i=1}^l k_i - l + \epsilon$ for $\epsilon = 0,1,2$. If we have closed curves along a $\Sigma_i$, when we cut we will have to add disks to both resulting boundary components which increases the Euler charactersitic by 2. For any surface resulting from cutting that is orientable we will have to add in a half twist band to make it non-orientable. Each such half twist band reduces the Euler characteristic by 1. Then $\sum_{i=1}^l \chi(S_i) = \chi(S) + t + c - b$ where $t = l$ or $t = 2l - 2$ depending on if the $K_i$ are the denominator or numerator closures, $c$ the number of closed curves on the $\Sigma_i$ which can be as large as $l$ and $b$ the number of twist bands added to make the $S_i$ non-orientable which also has maximum $l$. The value of $t$ arises from the bands which sit in $S^3 / \bigcup_i \Sigma_i$. \\
        \indent
        Now we see that 
        $$\sum_{i=1}^l (C(S_i)) =  2l - \sum_{i=1}^l \chi(S_i) - \sum_{i=1}^l k_i = 2l - \chi(S) - t - c + b - k_L - l + \epsilon.$$
        Notice that the weakest upperbound for $\sum_{i=1}^l (C(S_i))$ is when $t$ and $c$ are minimal and $b$ and $\epsilon$ are maximal. So this will be when we do not have any closed curves and each of the resulting $S_i$ need half twist bands to make them non orientable. So: $$\sum_{i=1}^l (C(S_i)) = 2l - \chi(S) - l + l  - k_L - l + 2 = C(L) + l.$$ Then moving the $l$ to the other side we see that $\sum_{i=1}^l C(K_i) - l \leq C(L)$, showing the claim.
        
        \end{proof}

As in section 3 we will now use Theorem~\ref{lem:EfLee2} to find bounds for $C(L)$ in terms of $tw(D(L))$ where $tw(D(L))$ is the twist number for a diagram of $L$.

\begin{theorem}
\label{thm:gentwist}
    Let $T_1,T_2,...,T_l$ be non-splittable, twist reduced, strongly alternating tangles and let $D(L)$ be the link diagram for the link $L$ which results from taking the Conway Sum of the tangles. Let $tw(D(L))$ denote the twist number of $D(L)$ and $C(L)$ the crosscap number for $L$, then
    $$\left\lceil\frac{tw(D(L))}{3} \right\rceil + 2 - k_L \leq C(L) \leq tw(D(L)) + l + 2 - k_L.$$
    
\end{theorem}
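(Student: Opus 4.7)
The plan is to mirror the proof of Theorem~\ref{twistmain} by substituting Theorem~\ref{thm:GeneralizedMain} for Theorem~\ref{thm:CrossMain} and extending each auxiliary statement to the $l$-tangle setting. The twist-number additivity Lemma~\ref{lem:twistnum} extends directly --- the argument against twist-region merging across an internal Conway sphere uses only strong alternation of the adjacent tangles --- and Lemma~\ref{lem:linkcompgen} replaces Lemma~\ref{lem:linkcomp} for the component count $\sum_{i=1}^l k_i = k_L + l - \epsilon$ with $\epsilon \in \{0,1,2\}$.

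For the upper bound, I would take the denominator-closure side of Theorem~\ref{thm:GeneralizedMain} in its $\epsilon$-refined form $C(L) \leq \sum_{i=1}^l C(K_{iD}) + 2 - \epsilon$ (this form is visible in the proof of Theorem~\ref{thm:GeneralizedMain} before the weakest case is selected), feed in $C(K_{iD}) \leq tw(D(K_{iD})) + 2 - k_{iD}$ from Theorem~\ref{lem:EfLee2}, replace $tw(D(K_{iD}))$ by $tw(T_i)$ using Lemma~\ref{lem:closuretwist}, and sum over $i$. The additivity $\sum tw(T_i) = tw(D(L))$ combined with the substitution $\sum k_{iD} = k_L + l - \epsilon$ causes the two copies of $\epsilon$ to cancel, producing exactly $C(L) \leq tw(D(L)) + l + 2 - k_L$.

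For the lower bound, I would pair the refined lower half of Theorem~\ref{thm:GeneralizedMain} with $C(K_i) \geq \lceil tw(D(K_i))/3 \rceil + 2 - k_i$ from Theorem~\ref{lem:EfLee2}, then use $tw(D(K_i)) \geq tw(T_i) - 2$ from Lemma~\ref{lem:closuretwist} together with $\sum tw(T_i) = tw(D(L))$. The $\epsilon$ arising from $\sum k_i = k_L + l - \epsilon$ will cancel against an $\epsilon$ coming from the refined lower bound of Theorem~\ref{thm:GeneralizedMain}, exactly as in the $l=2$ argument of Theorem~\ref{twistmain}. The main obstacle is the ceiling-function bookkeeping: the natural estimate $\sum_{i=1}^l \lceil (tw(T_i) - 2)/3 \rceil \geq \lceil tw(D(L))/3 \rceil - \lceil 2l/3 \rceil$ loses up to $\lceil 2l/3 \rceil$ compared to the single ceiling on the right, and this loss must be absorbed by the $+2l$ additive contribution from the $l$ applications of Theorem~\ref{lem:EfLee2} together with the $-l + 2 + t + c - b$ term in the refined lower bound of Theorem~\ref{thm:GeneralizedMain}, where $t$, $c$, and $b$ respectively track the closure type, the Conway-sphere intersection curves, and the half-twist bands added for non-orientability. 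Choosing the extremal values of these parameters to produce the weakest universally valid inequality, while preserving the cancellation of $\epsilon$, is the step that demands the most careful accounting and determines whether the stated additive constant emerges exactly.
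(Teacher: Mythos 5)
Your plan is the same as the paper's: both bounds are obtained by feeding Theorem~\ref{lem:EfLee2} and Lemma~\ref{lem:closuretwist} into the $\epsilon$-refined forms of the two halves of Theorem~\ref{thm:GeneralizedMain}, using additivity of twist number (Lemma~\ref{lem:twistnum}) and cancelling the two occurrences of $\epsilon$ via Lemma~\ref{lem:linkcompgen}, exactly as in Theorem~\ref{twistmain}. Your upper-bound computation is complete and reproduces the paper's bound $tw(D(L)) + l + 2 - k_L$.

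Your hesitation about the lower bound is justified, and you should not expect the stated constant to emerge: the ceiling loss is not absorbed. Carrying out your plan gives $C(L) \geq \sum_{i=1}^l \left\lceil (tw(T_i)-2)/3\right\rceil + 2 - k_L \geq \left\lceil (tw(D(L))-2l)/3\right\rceil + 2 - k_L$; the $+2l$ contributed by the $l$ applications of Theorem~\ref{lem:EfLee2} is entirely consumed in cancelling the $-l$ from Theorem~\ref{thm:GeneralizedMain} and the $+l$ from $\sum_{i=1}^l k_i = k_L + l - \epsilon$, leaving only the $+2$ and nothing with which to pay for the $\left\lceil 2l/3\right\rceil$ discrepancy between $\sum_i\left\lceil (tw(T_i)-2)/3\right\rceil$ and $\left\lceil tw(D(L))/3\right\rceil$. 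This is not a defect of your argument relative to the paper's: the final display in the paper's own proof reads $\left\lceil (tw(D(L))-2l)/3\right\rceil + 2 - k_L \leq C(L)$, with the $-2l$ inside the ceiling, which is strictly weaker than the inequality printed in the theorem statement. The version consistent with the proof (and with Theorem~\ref{thm:TLGen}, which retains the analogous $-2l$ in $\left\lceil (T_L-2l)/6\right\rceil$) is the one with $tw(D(L))-2l$ in the numerator; you should prove that statement rather than chase the printed one.
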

\begin{proof}
    We start with the bounds from Theorem~\ref{thm:GeneralizedMain}. From here we use Lemma~\ref{lem:EfLee2} and Lemma~\ref{lem:closuretwist} to get bounds on $C(L)$ with respect to the twist numbers of diagrams of $T_1,...,T_l$. Combining these two statements we find 
    $$\sum_{i=1}^l\left\lceil\frac{tw(T_i)-2}{3} \right\rceil + l - \sum_{i=1}^l k_i \leq C(L) \leq \sum_{i=1}^l tw(T_i) + 2l + 2 - \sum_{i=1}^l k_i,$$
    where $k_i$ is the number of link components for each $K_i$.   \\
    \indent
    We know that the twist number of strongly alternating tangles is additive over a Conway sum by Lemma~\ref{lem:twistnum}. So the only detail left to consider is the relationship between $k_L$ and $\sum_{i=1}^l k_i$. By Lemma~\ref{lem:linkcompgen} and a similar argument to that in Theorem~\ref{twistmain} we find the claim: 
    $$\left\lceil\frac{tw(D(L)) - 2l}{3} \right\rceil + 2 - k_L \leq c(L) \leq tw(D(L)) + l + 2 - k_L.$$
\end{proof}
The final piece of our puzzle is to find bounds in terms of $T_L$. To do this we use Lemma~\ref{lem:TLtwister} and Theorem~\ref{thm:gentwist} and the result follows. 
\begin{theorem}
    \label{thm:TLGen}
    Let $T_1,T_2,...,T_l$ be non-splittable, twist reduced, strongly alternating tangles and let $L$ be the link which results from taking the Conway Sum. Then let $C(L)$ be the crosscap number and $k_L$ the number of link components in $L$, then
    $$\left\lceil\frac{T_L-2l}{6} \right\rceil + 2 - k_L \leq C(L) \leq 2T_L + l + 6 + k_L.$$
\end{theorem}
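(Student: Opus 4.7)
The plan is a direct chaining of two results that are already at hand: Theorem~\ref{thm:gentwist} relates $C(L)$ to the twist number $tw(D(L))$ of the Conway-sum diagram, and Lemma~\ref{lem:TLtwister} gives two-sided comparison between $tw(D(L))$ and $T_L$. The argument should therefore mirror the derivation of Theorem~\ref{cor:TLTwist} from Theorem~\ref{twistmain} and Lemma~\ref{lem:TLtwister} in the two-tangle case, with no new geometric or topological input required. Note that the hypotheses on $T_1,\dots,T_l$ (non-splittable, twist-reduced, strongly alternating) are exactly what is needed to apply both intermediate results.

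For the upper bound, I would start from the right-hand inequality in Theorem~\ref{thm:gentwist}, namely $C(L) \leq tw(D(L)) + l + 2 - k_L$. Rearranging the left-hand inequality $\tfrac{tw(D(L))}{2} - k_L - 2 \leq T_L$ from Lemma~\ref{lem:TLtwister} produces $tw(D(L)) \leq 2T_L + 2k_L + 4$. Substituting this into the upper bound from Theorem~\ref{thm:gentwist} yields
$$C(L) \leq 2T_L + 2k_L + 4 + l + 2 - k_L = 2T_L + l + k_L + 6,$$
which is the claimed upper bound.

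For the lower bound, I would take the inequality $\lceil tw(D(L))/3\rceil + 2 - k_L \leq C(L)$ obtained in Theorem~\ref{thm:gentwist} (together with the $-2l$ correction visible in the proof of that theorem) and combine it with the right-hand inequality $T_L \leq 2\,tw(D(L))$ of Lemma~\ref{lem:TLtwister}. This second inequality gives $tw(D(L)) \geq T_L/2$, and substituting into the lower bound transforms the ceiling $\lceil (tw(D(L))-2l)/3\rceil$ into $\lceil (T_L - 2l)/6\rceil$ (up to the careful bookkeeping of constants inside the ceiling). After collecting terms, one obtains
$$\left\lceil \frac{T_L - 2l}{6}\right\rceil + 2 - k_L \leq C(L),$$
as claimed.

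I do not expect any conceptual obstacle: the hard work was done in Theorem~\ref{thm:GeneralizedMain}, Theorem~\ref{thm:gentwist}, and Lemma~\ref{lem:TLtwister}. The only care required is arithmetic, especially managing how the ceiling function interacts with the division by $2$ coming from Lemma~\ref{lem:TLtwister} and ensuring the $l$-dependent correction in the lower bound of Theorem~\ref{thm:gentwist} is tracked through the substitution so that the factor of $2l$ in $\lceil (T_L - 2l)/6\rceil$ appears correctly.
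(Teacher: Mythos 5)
Your approach is exactly the paper's: the paper's entire proof of this theorem is the single sentence that it follows from Lemma~\ref{lem:TLtwister} and Theorem~\ref{thm:gentwist}, so you have simply filled in the chaining that the paper leaves implicit. Your upper bound computation is correct and matches the claimed $2T_L + l + 6 + k_L$ exactly.

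The one place where the "careful bookkeeping" you defer actually bites is the lower bound. Substituting $tw(D(L)) \geq T_L/2$ into $\bigl\lceil (tw(D(L)) - 2l)/3 \bigr\rceil$ gives $\bigl\lceil (T_L/2 - 2l)/3 \bigr\rceil = \bigl\lceil (T_L - 4l)/6 \bigr\rceil$, not $\bigl\lceil (T_L - 2l)/6 \bigr\rceil$; as literally written, your route proves the weaker inequality with $-4l$ in the numerator. The claimed bound does still hold, but you should instead invoke the \emph{stated} form of the lower bound in Theorem~\ref{thm:gentwist}, namely $\lceil tw(D(L))/3 \rceil + 2 - k_L \leq C(L)$: since $T_L \leq 2\,tw(D(L))$ gives $\lceil T_L/6 \rceil \leq \lceil tw(D(L))/3 \rceil$, and $\lceil (T_L - 2l)/6 \rceil \leq \lceil T_L/6 \rceil$, the claimed inequality follows (indeed, this yields the stronger bound without the $-2l$ at all). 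The underlying wrinkle is the paper's own: the statement of Theorem~\ref{thm:gentwist} and the final display of its proof disagree by a $-2l$ inside the ceiling, and the $-2l$ appearing in Theorem~\ref{thm:TLGen} matches neither substitution exactly. You should state explicitly which version you are using and carry the constant through honestly.
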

With an additional constraint on our tangles we find the following corollary:
\begin{cor}
    \label{cor:TLGencor}
    Let $T_1,T_2,...,T_l$ be non-splittable, twist reduced, strongly alternating tangles such that $tw(T_i) = tw(D(K_{iN})) = tw(D(K_{iD}))$ for all $i \in \{1,...,l\}$. Let $L$ be the link which results from taking the Conway Sum, $C(L)$ the crosscap number, and $k_L$ the number of link components in $L$, then
    $$\left\lceil\frac{T_L}{6} \right\rceil + 2 - k_L \leq C(L) \leq 2T_L + l + 6 + k_L.$$
\end{cor}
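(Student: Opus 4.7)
The claim differs from Theorem~\ref{thm:TLGen} only in the lower bound, improving $\lceil (T_L - 2l)/6 \rceil$ to $\lceil T_L/6 \rceil$, so I would obtain the upper bound $C(L) \leq 2T_L + l + 6 + k_L$ immediately from Theorem~\ref{thm:TLGen} and spend the work on the lower bound. The plan is to retrace the derivation of Theorem~\ref{thm:gentwist}'s lower bound, exploiting the fact that the extra hypothesis $tw(T_i) = tw(D(K_{iN})) = tw(D(K_{iD}))$ turns Lemma~\ref{lem:closuretwist} into equality, and then pass from $tw(D(L))$ to $T_L$ via Lemma~\ref{lem:TLtwister}.

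For the lower bound, I would begin from Theorem~\ref{thm:GeneralizedMain} in the form $\sum_{i=1}^l C(K_i) - l \leq C(L)$, where the $K_i$ are the numerator or denominator closures dictated by cutting a crosscap-realizing spanning surface for $L$ along the Conway spheres. Applying Lemma~\ref{lem:EfLee2} termwise gives $C(K_i) \geq \lceil tw(D(K_i))/3\rceil + 2 - k_i$. In the general proof of Theorem~\ref{thm:gentwist} one would then invoke $tw(D(K_i)) \geq tw(T_i) - 2$ from Lemma~\ref{lem:closuretwist}, costing $2l$ inside the ceiling; under the present hypothesis, however, $tw(D(K_i)) = tw(T_i)$ exactly, so no loss occurs. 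Summing via the superadditivity $\sum \lceil tw(T_i)/3\rceil \geq \lceil \sum tw(T_i)/3\rceil$ together with Lemma~\ref{lem:twistnum}'s additivity $\sum tw(T_i) = tw(D(L))$ yields $\sum C(K_i) \geq \lceil tw(D(L))/3\rceil + 2l - \sum k_i$, and combining with $\sum C(K_i) - l \leq C(L)$ and $\sum k_i = k_L + l - \epsilon$ from Lemma~\ref{lem:linkcompgen} produces $\lceil tw(D(L))/3\rceil + 2 - k_L \leq C(L)$, exactly as in the $\epsilon$-bookkeeping of Theorem~\ref{twistmain} and Theorem~\ref{thm:gentwist}.

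To finish, I would apply the upper half of Lemma~\ref{lem:TLtwister}, namely $T_L \leq 2\,tw(D(L))$, which gives $\lceil tw(D(L))/3\rceil \geq \lceil T_L/6\rceil$ and hence the claimed lower bound $\lceil T_L/6\rceil + 2 - k_L \leq C(L)$. The main technical nuisance will be the same one that already complicates Theorems~\ref{twistmain} and~\ref{thm:gentwist}: keeping careful track of the shift $\epsilon \in \{0,1,2\}$ from Lemma~\ref{lem:linkcompgen} so that the $+2l$ from Lemma~\ref{lem:EfLee2} and the $-l + \epsilon$ from the link-component substitution combine to leave exactly $+2 - k_L$. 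The extra hypothesis does no work there; its sole but essential role is to prevent the $-2l$ from Lemma~\ref{lem:closuretwist} from ever entering the ceiling, which is precisely the gap separating Corollary~\ref{cor:TLGencor} from Theorem~\ref{thm:TLGen}.
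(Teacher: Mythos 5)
Your route is the one the paper itself intends: the corollary is stated without proof precisely because it is the derivation of Theorem~\ref{thm:TLGen} with the $-2l$ loss from Lemma~\ref{lem:closuretwist} suppressed, and you correctly identify that the sole job of the hypothesis $tw(T_i)=tw(D(K_{iN}))=tw(D(K_{iD}))$ is to turn that lemma into an equality so nothing is lost inside the ceiling. Taking the upper bound verbatim from Theorem~\ref{thm:TLGen} is also right.

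There is, however, one step whose arithmetic does not close as you have written it. If you start from the black-box form $\sum_{i=1}^l C(K_i) - l \leq C(L)$ of Theorem~\ref{thm:GeneralizedMain}, apply Theorem~\ref{lem:EfLee2} termwise, and then substitute $\sum_i k_i = k_L + l - \epsilon$, the chain yields $C(L) \geq \lceil tw(D(L))/3\rceil + \epsilon - k_L$; since $\epsilon$ is determined by the link and may equal $0$, the guaranteed conclusion is only $\lceil T_L/6\rceil - k_L \leq C(L)$, two short of the claim. The $+2$ survives only because the two occurrences of $\epsilon$ are the \emph{same} quantity and cancel: the stated lower bound of Theorem~\ref{thm:GeneralizedMain} has already been specialized to the worst case $\epsilon = 2$, so you must instead carry the $\epsilon$-dependent inequality $\sum_i C(K_i) - l + 2 - \epsilon \leq C(L)$ out of its proof and let its $-\epsilon$ cancel the $+\epsilon$ coming from Lemma~\ref{lem:linkcompgen}, exactly as the paper does inside the proof of Theorem~\ref{twistmain}. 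You flag this as the ``main technical nuisance,'' but the displayed bookkeeping does not actually perform the cancellation; once it is done, your argument gives the corollary as stated.
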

\vspace{.2cm}
 \section{Families where $T_L$ and the Crosscap Number are Independent}
 We begin by recalling the following theorem from ~\cite{LEE} which gives linear bounds for the crosscap number of an alternating link in terms of $T_L$, where $T_L = |\beta_L| + |\beta_L'|$ and $\beta_L$ and $\beta_L'$ are the second and second to last coefficients of the Jones polynomial of $L$ respectively. 
 \begin{theorem}
 \label{thm:EfLee1}
 Let $L$ be a non-split, prime alternating link with $k$-components and with crosscap number $C(L)$. Suppose that K is not a $(2,p)$ torus link. We have
 $$\left\lceil\frac{T_L}{3} \right\rceil + 2 - k \leq C(L) \leq T_L + 2 - k$$
 where $T_L$ is as above. Furthermore, both bounds are sharp.
 \end{theorem}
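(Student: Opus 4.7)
The plan is to reduce Theorem~\ref{thm:EfLee1} directly to the twist-number bound of Theorem~\ref{lem:EfLee2} by invoking the Dasbach--Lin identity $T_L = tw(D(L))$ for reduced alternating diagrams. Since $L$ is itself alternating there is no Conway decomposition to worry about, and the edge losses in the state-graph analysis of Lemma~\ref{lem:TLtwister} do not arise: the inequality between $T_L$ and $tw(D(L))$ collapses to an equality.

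First I would fix a reduced, prime alternating diagram $D(L)$ of $L$; after flypes if necessary, I may assume it is twist-reduced so that Theorem~\ref{lem:EfLee2} is available. Next I would check that $tw(D(L))\geq 2$. If $tw(D(L)) = 1$, the primality of $D(L)$ together with the fact that it is alternating forces $L$ to be a $(2,p)$ torus link, which the hypothesis excludes; the case $tw(D(L)) = 0$ is vacuous for a non-trivial prime link. Applying Theorem~\ref{lem:EfLee2} therefore yields
$$\left\lceil\frac{tw(D(L))}{3}\right\rceil + 2 - k \leq C(L) \leq tw(D(L)) + 2 - k.$$
I then substitute the Dasbach--Lin equality $T_L = tw(D(L))$~\cite{twistJones}, which is the very identity already invoked in the proof of Lemma~\ref{lem:TLtwister} (with the key simplification that for alternating $L$ there is no $\ell_{\text{ext}}$ correction term). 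This produces both inequalities in the statement.

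For the sharpness claim I would point to two families of alternating links. For the upper bound, a chain of isolated single-crossing twist regions (for instance an appropriate pretzel-type link) realizes $C(L) = T_L + 2 - k$. For the lower bound, an alternating link built from three-crossing twist regions, so that each triple of crossings is absorbable into a single crosscap band, realizes $C(L) = \lceil T_L/3\rceil + 2 - k$. These are exactly the sharp examples produced in~\cite{LEE} for Theorem~\ref{lem:EfLee2}; since they are alternating, the Dasbach--Lin identity transfers the sharpness without modification.

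The main obstacle is not the two-sided inequality itself, which is essentially a one-line substitution once the Dasbach--Lin identity is in hand, but rather verifying the sharpness assertion: exhibiting alternating families where $C(L)$ exactly attains the ceiling or the linear expression requires matching the combinatorial structure of a spanning-surface construction to the twist-region structure. This is the technical content already carried out in~\cite{LEE} and so does not need to be reproved here.
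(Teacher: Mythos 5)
The paper does not prove this statement at all—it is recalled verbatim from \cite{LEE} as background for Section 5—so there is no internal proof to compare against. Your reduction (pass to a twist-reduced prime alternating diagram, note that excluding $(2,p)$ torus links forces $tw(D(L))\geq 2$, apply Theorem~\ref{lem:EfLee2}, and substitute the Dasbach--Lin equality $T_L = tw(D(L))$ for reduced alternating diagrams, with sharpness inherited from the same examples) is correct and is essentially the argument by which \cite{LEE} itself derives this theorem from its twist-number version.
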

 In the previous sections we showed Theorem~\ref{thm:EfLee1} generalizes to Conway sums of strongly alternating tangles. In this section we will show that Theorem~\ref{thm:EfLee1} does not generalize to arbitrary knots. 
 \begin{reptheorem}{Thm:nongen}
 
   We have the following;
    \begin{enumerate}[(a)]
        \item There exists a family of links for which $T_L \leq 2$, but $C(L)$ is arbitrarily large.
        \item There exists a family of links for which $C(L) \leq 3$, but $T_L$ is arbitrarily large. 
    \end{enumerate}
 \end{reptheorem}
 \subsection{Part (\textit{a}) of theorem~\ref{Thm:nongen}}
In this section we will consider the following family of torus knots; $T(p,q)$, where $q = j$ and $p = 2+2jk$ for odd $j>1$ and all natural numbers $k$. This family will allow us to prove part(\textit{a}) of Theorem~\ref{Thm:nongen}. We start with the following definition from Teragaito~\cite{TorusCC}. 

\begin{definition}
    \label{NPQ}

We define the value $N(p,q)$ from ~\cite{TorusCC} for fractions $\frac{p}{q}$, where $p$ and $q$ are coprime, to begin write $\frac{p}{q}$ as a continued fraction,
 
     $$\frac{p}{q} = [a_0, a_1, a_2, ... , a_n] = a_0 + \frac{1}{a_1 + \frac{1}{a_2 + \frac{1}{\cdot + \frac{1}{a_n}}}}, $$
 
 where the $a_i$ are integers, $a_0 \geq 0$, $a_i > 0 $ for $1 \leq i \leq n$, and $a_n > 1$. A continued fraction of this form is unique (cf~\cite{fractions}). Now we recursively define $b_i$ as follows: 
 $$b_0 = a_0$$ 
 \[ b_i = \begin{cases} 
      a_i & \text{if } b_{i-1} \neq a_{i-1} \text{ or if } \sum\limits_{j=0}^{i-1} b_j \text{ is odd}, \\
      0 & \text{if } b_{i-1} = a_{i-1} \text{ and } \sum\limits_{j=0}^{i-1} b_j \text{ is even}. 
   \end{cases}
\]
\end{definition}
Then, $N(p,q) = \frac{1}{2}\sum_{i=1}^n b_i$. We say a torus knot $K$ is even if the product of $p$ and $q$ is even and we say $K$ is odd otherwise. Using these definitions we can state Theorem 1.1 from~\cite{TorusCC}.
\begin{theorem}
\label{TorusCC}
 Let $K$ be the non-trivial torus knot of type $(p,q)$, where $p,q > 0$ and let $F$ be a non-orientable spanning surface of $K$ with $C(F) = C(K)$.
\begin{enumerate}[(1)]
    \item If $K$ is even, then $C(K) = N(p,q)$ and the boundary slope of $F$ is $pq$.
    \item If $K$ is odd, then $C(K) = N(pq-1,p^2)$ (resp. $N(pq+1,p^2)$) and the boundary slope of $F$ is $pq-1$ (resp. $pq+1$) if $xq \equiv -1$ (mod $p$) has an even (resp. odd) solution $x$ satisfying $0 < x < p$.
\end{enumerate}
\end{theorem}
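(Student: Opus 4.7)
The plan is to reduce the problem to a classification of essential non-orientable surfaces in the exterior $E(K) = S^3 \setminus N(K)$ of the torus knot $K = T(p,q)$, and then to identify the minimum cross-cap number among them with the combinatorial quantity $N(p,q)$. Since $E(K)$ is a Seifert fibered space over a disk with two exceptional fibers of multiplicities $p$ and $q$, any essential surface in $E(K)$ is either horizontal (transverse to the fibration) or vertical (saturated by fibers). Vertical essential surfaces in $E(K)$ are annuli, hence orientable, so any non-orientable spanning surface realizing $C(K)$ may be isotoped to be horizontal, and I would work exclusively with horizontal surfaces from here on.

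First I would build a family of candidate non-orientable spanning surfaces directly from the continued fraction expansion $p/q = [a_0, a_1, \ldots, a_n]$. The construction glues together layers of fiber disks along saddles in a pattern dictated by the partial convergents: at step $i$, the parameter $a_i$ records how many parallel sheets are stacked, while the auxiliary sequence $b_i$ records whether the newest layer is absorbed into its predecessor. The rule $b_i = 0$ when $b_{i-1} = a_{i-1}$ and $\sum_{j<i} b_j$ is even captures precisely the situation in which two boundary circles from the previous layer pair off coherently, allowing a disk cancellation that trades a cross-cap for a genuine handle. Tracking the Euler characteristic through this construction yields a non-orientable spanning surface with exactly $\tfrac{1}{2}\sum_{i=1}^n b_i = N(p,q)$ cross-caps and with boundary slope equal to $pq$, the slope of $K$ on the standard Heegaard torus.

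For the lower bound, I would invoke a classification of horizontal essential surfaces in $E(K)$ in the spirit of Hatcher's analysis of surfaces in Seifert fibered spaces to show that every essential non-orientable spanning surface arises, up to isotopy, from branch data of the form above. For \emph{even} torus knots, a parity argument on the boundary slope forces that slope to be $pq$, and minimizing the cross-cap count across admissible branch data recovers exactly $N(p,q)$. For \emph{odd} torus knots, a standard $\Z/2$ parity obstruction rules out $pq$ as a boundary slope for a non-orientable surface, forcing the minimizer to realize one of the adjacent integers $pq \pm 1$; these correspond to horizontal surfaces parameterized by continued fraction expansions of $(pq \mp 1)/p^{2}$, and the stated congruence $xq \equiv -1 \pmod{p}$ is exactly the arithmetic condition that distinguishes which of $pq-1$ and $pq+1$ supports the smaller surface.

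The main obstacle is the classification step and the bookkeeping that matches it with $N(p,q)$: one must produce a complete, non-redundant list of isotopy classes of essential horizontal non-orientable surfaces and then verify that the recursive definition of $b_i$ tracks, without over- or under-counting, the cross-cap absorptions that can occur as one moves through the continued fraction. This combinatorial accounting is the heart of the argument, and it is where the even and odd cases must be handled separately, with the odd case additionally requiring the slope-shifting trick that replaces $p/q$ by $(pq \mp 1)/p^{2}$.
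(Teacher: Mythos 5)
First, a point of order: the paper does not prove this statement. It is quoted verbatim as Theorem 1.1 of Teragaito's paper \cite{TorusCC}, so your proposal must be measured against Teragaito's argument rather than against anything in this text. Measured that way, your plan has a genuine gap at its very first reduction. The horizontal/vertical classification of essential surfaces in Seifert fibered spaces applies to \emph{two-sided} essential surfaces, whereas an embedded non-orientable spanning surface of a knot in $S^3$ is one-sided; passing to the boundary of a regular neighborhood gives a two-sided surface that need not remain essential, so you cannot conclude that a crosscap-realizing surface is isotopic to a horizontal one. The failure is not merely formal. A horizontal surface in $E(T(p,q))$ is an $n$-sheeted orbifold cover of the base disk with $p \mid n$ and $q \mid n$, hence $\chi = n\left(\tfrac{1}{p}+\tfrac{1}{q}-1\right) \le p+q-pq < 0$ for coprime $p,q \ge 2$; yet the M\"obius band spanning $T(2,3)$ has $\chi = 0$ and already realizes $C(T(2,3)) = 1$, so the minimizer is neither horizontal nor a vertical annulus. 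Worse, every horizontal surface has boundary slope equal to the fiber slope $pq$, so in the odd case there is \emph{no} horizontal surface with the required slope $pq \pm 1$ at all: your own parity observation contradicts the framework you propose to work in rather than completing it.

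The argument that actually works (Teragaito's) replaces the in-exterior classification by Dehn filling. The boundary slope of any spanning surface is an even integer, so it is $pq$ when $pq$ is even and $pq \pm 1$ when $pq$ is odd; capping $F$ off inside the filling solid torus produces a closed non-orientable surface in $K(pq) = L(p,q)\#L(q,p)$, respectively $K(pq\pm1) = L(pq\pm1,p^2)$, and Bredon--Wood's theorem that the minimal crosscap number of a closed non-orientable surface embedded in the lens space $L(p,q)$ equals $N(p,q)$ supplies both the upper and the lower bound, with the congruence $xq \equiv -1 \pmod{p}$ deciding which of the two fillings admits the smaller surface. The continued-fraction bookkeeping you describe is real, but it lives entirely inside Bredon--Wood's lens-space computation, not in a classification of one-sided surfaces in the knot exterior; without the passage to the filled manifold your construction has no matching lower bound and your lower bound has no class of surfaces to range over.
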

We take advantage of (\textit{1}) from Theorem~\ref{TorusCC} to both construct our family of torus link and prove Proposition~\ref{prop:torus} below. We also need the following lemma, which gives an explicit formula for the Jones polynomial of a torus knot originally given in Proposition 11.9 of ~\cite{TorusJones}, which allows us to calculate $T_L$ for torus knots.
\begin{lemma}
\label{TorusJones}
The Jones polynomial for a torus knot $T(p,q)$ is given by;
$$V(T(p,q)) = t^{(p-1)(q-1)/2}\frac{1 - t^{p+1} - t^{q+1} + t^{p+q}}{1-t^2}.$$ 
\end{lemma}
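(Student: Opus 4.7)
The statement is classical; it appears as Proposition 11.9 of~\cite{TorusJones}, so my proposal is essentially to sketch the standard derivation. The natural approach is to realize $T(p,q)$ as the closure of the braid $\beta_{p,q} = (\sigma_1 \sigma_2 \cdots \sigma_{q-1})^p \in B_q$ and to compute its Jones polynomial either by Jones' original Markov-trace calculation on the Hecke algebra $H_q(t)$, or by a skein-relation induction. Since only elementary tools are used elsewhere in the paper, I would favor the latter.

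The plan is induction on $p$ (with $q$ fixed, using the symmetry $T(p,q)=T(q,p)$ to arrange $q \leq p$ if convenient). For the base cases I would verify $p=1$, where $\beta_{1,q}$ closes to the unknot and the right-hand side of the claimed formula simplifies to $1$ after cancelling $1-t^2$, together with $q=2$, which recovers the well-known Jones polynomial of the $(2,p)$-torus link by a direct computation. For the inductive step I apply the skein relation
\[ t^{-1} V(L_+) - t\, V(L_-) = (t^{1/2} - t^{-1/2})\, V(L_0) \]
at one of the crossings corresponding to the last $\sigma_{q-1}$ in $\beta_{p,q}$. Switching the crossing produces, after the braid relations $\sigma_i \sigma_{i+1} \sigma_i = \sigma_{i+1} \sigma_i \sigma_{i+1}$ and Markov moves, a diagram isotopic to a torus link of strictly smaller complexity, while smoothing produces the closure of a braid on $q-1$ strands that is itself a torus knot. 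One then checks that the candidate closed form $t^{(p-1)(q-1)/2}(1-t^{p+1}-t^{q+1}+t^{p+q})/(1-t^2)$ satisfies the same three-term recursion with the same initial data, so the two expressions agree by induction.

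The main obstacle is the bookkeeping: correctly identifying the links produced by switching and smoothing a chosen crossing of $\beta_{p,q}$ (after applying braid relations and Markov destabilization to return to a standard torus form), tracking the writhe so that the normalization of the Kauffman bracket gives the correct Jones polynomial, and then algebraically checking that the proposed closed form satisfies the resulting recursion. The telescoping structure of the numerator $1-t^{p+1}-t^{q+1}+t^{p+q}$ makes the algebraic verification routine but somewhat tedious because one must clear the denominator $1-t^2$ consistently. For this reason the cleanest write-up is simply to invoke the Hecke-algebra trace computation in~\cite{TorusJones}, which derives the formula in one step from the representation theory of $H_q(t)$.
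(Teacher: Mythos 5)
The paper does not actually prove Lemma~\ref{TorusJones}: it is stated as a known result and attributed to Proposition 11.9 of \cite{TorusJones}, i.e.\ precisely the Hecke-algebra/Markov-trace computation that you name as your fallback in the final sentence. So your ultimate recommendation coincides with what the paper does, and for the purposes of this paper that is all that is needed.

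The route you say you would favor, however --- a skein-relation induction on $p$ --- has a genuine gap rather than a bookkeeping problem. For $q=2$ the induction is standard, but for $q\geq 3$ neither operation applied to a crossing of $(\sigma_1\cdots\sigma_{q-1})^p$ stays inside the torus family: switching one $\sigma_{q-1}$ yields the closure of $(\sigma_1\cdots\sigma_{q-1})^{p-1}(\sigma_1\cdots\sigma_{q-2})\sigma_{q-1}^{-1}$ and smoothing yields the closure of $(\sigma_1\cdots\sigma_{q-1})^{p-1}(\sigma_1\cdots\sigma_{q-2})$. The latter word still involves $\sigma_{q-1}$, so it is not a braid on $q-1$ strands and does not Markov-destabilize to one; its closure is in general a twisted torus link, not a torus knot. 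Consequently there is no three-term recursion among the $V(T(p,q))$ for the closed form to satisfy, and the induction as described cannot close up. Repairing it requires enlarging the class of links under consideration (or working in the Kauffman bracket skein module of the solid torus via the cabling/satellite formula), which is a substantially different and longer argument. Since the lemma is only quoted in the paper, the safe course is the one you end with: cite the trace computation in \cite{TorusJones} rather than attempt the skein induction.
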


\begin{prop}
\label{prop:torus}
Let $L = T(p,q)$ be the family of torus knots where $q > 1$ is odd and $p = 2+2qk$ for $k \in \mathbb{N}$, then $T_L \leq 2$ but $C(T(p,q))$ can be made arbitrarily large.
\end{prop}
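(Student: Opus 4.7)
The argument separates into a computation of $T_L$ from the explicit Jones polynomial formula and an application of Teragaito's crosscap formula for even torus knots. For the first part, Lemma~\ref{TorusJones} gives
\[
V(T(p,q)) = t^{(p-1)(q-1)/2}\cdot\frac{1-t^{p+1}-t^{q+1}+t^{p+q}}{1-t^2}.
\]
Because $p = 2+2qk$ is even and $q$ is odd, both $q+1$ and $q-1$ are even, so the rational factor splits as $(1-t^{q+1})/(1-t^2) - t^{p+1}(1-t^{q-1})/(1-t^2)$, which evaluates to the honest polynomial
\[
(1+t^2+\cdots+t^{q-1}) - t^{p+1}(1+t^2+\cdots+t^{q-3}).
\]
For $k\ge 1$ one has $p+1 > q-1$, so the positive block (even exponents $\le q-1$) and the negative block (odd exponents $p+1,\,p+3,\,\ldots,\,p+q-2$) do not interact. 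The next-to-top exponent $p+q-3$ is even and strictly greater than $q-1$, so it occurs in neither block; similarly $t^1$ occurs in neither. Hence $\beta_L = \beta_L' = 0$ and $T_L = 0 \le 2$.

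For the second part, since $p$ is even, $pq$ is even and Theorem~\ref{TorusCC}(1) gives $C(T(p,q)) = N(p,q)$. The plan is to compute the continued fraction of $p/q = 2k + 2/q$: using the identity $2/q = 1/((q-1)/2 + 1/2)$, valid for odd $q\ge 3$, one obtains
\[
p/q = [\,2k;\ (q-1)/2,\ 2\,].
\]
I would then feed $(a_0, a_1, a_2) = (2k,\, (q-1)/2,\, 2)$ into the recursive definition of the $b_i$ in Definition~\ref{NPQ}, carefully tracking the parities of the partial sums $\sum_{j<i} b_j$, and evaluate $N(p,q)$ explicitly as a function of $k$ and $q$. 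The resulting value grows without bound as $k\to\infty$ for fixed $q$, which is exactly the desired unboundedness of $C(T(p,q))$.

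The main obstacle is the explicit evaluation of $N(p,q)$ from the $b_i$-recursion: the parity conditions must be unwound one level at a time, and one must verify that the growth encoded in $a_0 = 2k$ is transferred into $N(p,q)$ rather than absorbed by the short tail $(q-1)/2,\,2$ of the continued fraction. Once this verification is complete, combining the two paragraphs above yields a family exhibiting $T_L \le 2$ alongside arbitrarily large crosscap number, proving the proposition.
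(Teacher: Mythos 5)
Your approach is the same as the paper's: compute $T_L$ directly from the closed form of the Jones polynomial in Lemma~\ref{TorusJones}, and get the crosscap number from Teragaito's $N(p,q)$ via the continued fraction of $p/q$. The $T_L$ half is complete and correct --- indeed you make explicit (by checking that the exponents $p+q-3$ and $1$ lie in neither the even block $\{0,2,\dots,q-1\}$ nor the odd block $\{p+1,p+3,\dots,p+q-2\}$) what the paper compresses into ``taking the polynomial division,'' and you correctly conclude $T_L=0$.

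The gap is in the second half: you write down the continued fraction $p/q=[2k,(q-1)/2,2]$ and then announce that you \emph{would} run the $b_i$-recursion, flagging the evaluation of $N(p,q)$ as the main obstacle, but you never carry it out. Since that evaluation is the entire content of the unboundedness claim, the proof as written is incomplete. Fortunately the recursion terminates in two steps: $b_0=a_0=2k$; since $b_0=a_0$ and $\sum_{j=0}^{0}b_j=2k$ is even, $b_1=0$; since $b_1=0\neq a_1=(q-1)/2$, $b_2=a_2=2$. Hence $N(p,q)=\tfrac{1}{2}(2k+0+2)=k+1$ (reading the sum in Definition~\ref{NPQ} as starting at $i=0$, as the paper's own computation does), which tends to infinity with $k$ for fixed $q$; the tail $(q-1)/2,\,2$ cannot absorb the growth because $b_1$ is forced to $0$ and $b_2$ to $2$ regardless of the value of $(q-1)/2\geq 1$. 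Note also that your continued fraction is the correct one for general odd $q$: the expression $2k+\frac{1}{1+\frac{1}{2}}=[2k,1,2]$ displayed in the paper's proof equals $2k+\frac{2}{3}$ and so is literally only the $q=3$ case, though this does not change $b_1$, $b_2$, or the final value $N(p,q)=k+1$.
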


\begin{proof}
Let $q > 1$ be odd, and $p = 2 + 2qk$ where $k$ is a natural number. To show that for all such torus knots, $L = T(p,q)$, $C(L)$ does not have a universal upper bound with respect to $T_L$, we will show that as $k$ goes to $\infty$, $C(L) $ also goes to $ \infty$, but $T_L \leq 2$. We start by computing the crosscap number of $T(p,q)$ using Theorem~\ref{TorusCC}. \\
\indent
First we notice that $\frac{p}{q} = \frac{2+2qk}{q} =  2k + \frac{1}{1 + \frac{1}{2}}$. Then $A = [2k, 1, 2]$. Then by definition~\ref{NPQ} $B = [2k, 0, 2]$. Finally, as $pq$ is even, $$C(L) = N(p,q) = \frac{2k + 0 + 2}{2} = k + 1.$$ Then as $k \rightarrow \infty$, $C(L)$ also goes to $\infty$.\\
\indent
Next by Lemma~\ref{TorusJones} we know that 
\begin{align*}
    V(L)&= t^{(p-1)(q-1)/2}\frac{1 - t^{p+1} - t^{q+1} + t^{p+q}}{1-t^2}\\
    &= t^{((2+2qk)q - (2+2qk) - q + 1)/2}\frac{1 - t^{2+2qk+1} - t^{q+1} + t^{2+2qk+q}}{1-t^2}\\
    &= t^{((2+2qk)q - (2+2qk) - q + 1)/2}(-t^{2qk+q} - t^{2qk + q - 2} - \\
    &\hspace{2cm} \dots - t^{2+2qk+1} + t^{q -1} + \dots + t^2 + 1).\\
\end{align*}
The last step arises from taking the polynomial division. Therefore, given our choices of $p$ and $q$ we see that $T_L \leq 2$. 
\end{proof}
Then Proposition~\ref{prop:torus} shows part (a) of Theorem~\ref{Thm:nongen}.

\subsection{Part (\textit{b}) of theorem~\ref{Thm:nongen}}
In this section we will work to prove part (\textit{b}) of Theorem~\ref{Thm:nongen} and the following theorem:
\begin{theorem}
    \label{thm:twob}
     There does not exist a universal linear lower bound on $C(L)$ for all links, $L$, in terms of $T_L$.
\end{theorem}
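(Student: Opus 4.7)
The plan is to exhibit an explicit infinite family of Whitehead doubles $W_n$ that simultaneously satisfy $C(W_n)\le 3$ and $T_{W_n}\to\infty$, which immediately rules out any universal linear lower bound for $C(L)$ in terms of $T_L$ and thus proves Theorem~\ref{thm:twob} (and completes part (b) of Theorem~\ref{Thm:nongen}).

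First I would set up the upper bound on the crosscap number. Every Whitehead double $W$ of a knot $K$ admits a genus-one Seifert surface (the standard once-twisted annulus capped off by a band), so $g(W)=1$. Clark's inequality $C(L)\le 2g(L)+1$, stated in the introduction, then yields $C(W)\le 3$ for every Whitehead double. This part is essentially automatic and requires no choice of family.

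Next I would choose the companion knots so that the resulting doubles are $B$-adequate. A convenient choice is to take $K_n$ to be a suitable torus knot or, more concretely, a sequence of knots whose standard diagrams are already $B$-adequate and have a large crossing number $c_n\to\infty$ with sufficiently many non-nugatory crossings; it is known that the untwisted (or $m$-twisted for fixed $m$) Whitehead double of a $B$-adequate knot admits a $B$-adequate diagram obtained by clasping two parallel copies together. With $B$-adequacy in hand I would invoke the formula of Stoimenow, cited in the introduction, that expresses $|\beta'_L|$ for a $B$-adequate link in terms of the reduced all-$B$ state graph (roughly $|\beta'_L|=1-\chi(G'_B)$, or an equivalent combinatorial count of ``non-simple'' edges). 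A direct inspection of the $B$-resolution of a Whitehead double of $K_n$ shows that $G'_B$ contains a planar subgraph whose first Betti number is bounded below by a linear function of $c_n$; therefore $|\beta'_{W_n}|\ge c_n/c$ for some constant $c$, and hence $T_{W_n}\ge |\beta'_{W_n}|\to\infty$.

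Finally I would assemble the two ingredients: the sequence $\{W_n\}$ satisfies $C(W_n)\le 3$ by Clark's bound while $T_{W_n}\to\infty$ by the Stoimenow computation, so no linear function of $T_L$ can bound $C(L)$ from below on all knots. The step I expect to be the main obstacle is the explicit combinatorial verification that the $B$-state graph of a doubled diagram indeed has an unbounded reduced edge count as $c_n\to\infty$; one has to be careful that passing from $K_n$ to its Whitehead double does not collapse all new $B$-circles together and thereby kill the growth. This is mostly a bookkeeping argument on the $B$-resolution of the clasp region versus the $B$-resolution of $n$ parallel copies of each crossing of $K_n$, and it is where the hypotheses on the companion diagram (reducedness and $B$-adequacy) are used.
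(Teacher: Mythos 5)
Your overall architecture matches the paper's: Clark's bound $C(L)\le 2g(L)+1$ applied to genus-one Whitehead doubles for the upper bound, and B-adequacy of the double plus Stoimenow's formula $|\beta'|=b(G'_B)$ for the lower bound. However, the step you yourself flag as the main obstacle is where the argument actually breaks. You claim that $G'_B$ of the doubled diagram has first Betti number growing linearly in the crossing number $c_n$ of the companion. This is false: the Dasbach--Lin cabling lemma (Lemma~2.5 of their paper, which this paper invokes as Lemma~\ref{betticomp}) shows that passing to the blackboard $2$-cable does not change the first Betti number of the \emph{reduced} all-B state graph at all, and the clasp adds exactly one. So $b(G'_B(W_{-}(D)))=b(G'_B(D))+1$, and the growth must come entirely from $b(G'_B(D_n))=|\beta'_{K_n}|$ of the companion, not from $c_n$. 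The $4c_n$ new edges created by cabling are all parallel copies and die in the reduction.

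This makes your ``convenient choice'' of torus knot companions fatally bad: part (a) of Theorem~\ref{Thm:nongen} in this very paper shows $T_{T(p,q)}\le 2$, so any B-adequate diagram of a torus knot has $b(G'_B)\le 2$, and the resulting doubles would have $|\beta'_{W_n}|\le 3$ --- bounded, hence no contradiction with a linear lower bound. The repair is exactly what the paper does: choose companions with $|\beta'_{K_n}|\to\infty$, e.g.\ the connect sum of $n$ alternating knots each with $\beta'\neq 0$ (concretely, $n$ trefoils), where multiplicativity of the Jones polynomial under connect sum gives $|\beta'_{K}|\ge n$, and then $|\beta'_{W_{-}(K)}|=|\beta'_K|+1\ge n+1$ via Lemma~\ref{lem:Badequacy}, Lemma~\ref{betticomp}, and Lemma~\ref{betaprimeform}. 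With that substitution your proof goes through; without it, the family you construct need not have unbounded $T_L$.
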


To prove this we will introduce a family of links for which $C(L)$ is uniformly bounded but $T_L$ can be made arbitrarily large. These links will be constructed by using the Whitehead double defined here:

\begin{definition}
The \textit{Whitehead double} of a knot $L$ is the satellite of the unknot clasped inside of the torus. We call it a \textit{positive} Whitehead double if the clasp is as in Figure~\ref{whitehead} and a \textit{negative} Whitehead double if not. 

\begin{figure}[h]
    
    \center
    \includegraphics[width=9cm]{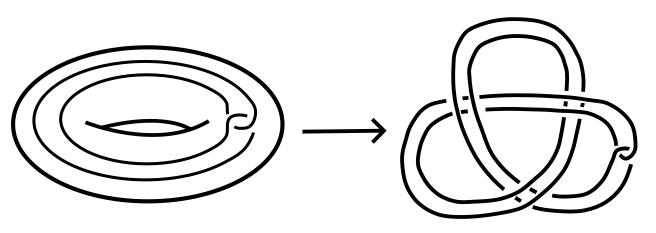}
    \caption{Here we see the unknot with a clasp contained inside the torus, then we see the resulting \textit{positive} Whitehead double with the blackboard framing when map the torus to the trefoil.}
    \label{whitehead}
    
\end{figure}

\end{definition}

The particular family is defined in this next theorem:
\begin{theorem}
   \label{lowerboundgen}
    Let $K_1,K_2,...,K_n$ be alternating knots such that $\beta'_{K_i} \neq 0$. Then we let $K$ be the connect sum of $K_1,K_2,...,K_n$ such that $K$ is alternating and $W_{-}(K)$ be the negative Whitehead double of $K$ using the blackboard framing. Then $C(L) \leq 3$ and $|\beta'_L| \geq n$. 
\end{theorem}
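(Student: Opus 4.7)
The plan is to prove the two bounds separately, writing $L = W_{-}(K)$. The upper bound $C(L) \leq 3$ follows at once from Clark's inequality $C(L) \leq 2g(L)+1$ cited in the introduction, combined with the observation that every Whitehead double bounds a natural genus-one Seifert surface (a once-punctured torus that caps the clasp and is pushed along the companion), so $g(L) \leq 1$.

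For the lower bound, I would show that $L$ admits a B-adequate diagram $D$ and then apply Stoimenow's formula used implicitly in Lemma~\ref{lem:TLtwister}, which for a B-adequate diagram reads
$$|\beta'_L| \,=\, e_B'(D) - v_B(D) + 1,$$
i.e.\ the first Betti number of the reduced all-B state graph $G_B'(D)$. The diagram $D$ is built from a reduced alternating diagram $D(K)$ of $K = K_1 \# \cdots \# K_n$ by doubling every strand along the blackboard framing and inserting a single negative clasp in a chosen arc. B-adequacy is then checked locally: the all-B resolution of each doubled crossing reproduces two parallel B-resolved copies of the underlying crossing of $D(K)$, while the all-B resolution at the negative clasp introduces no one-edge loops. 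Since $D(K)$ is itself reduced alternating (hence B-adequate), one concludes that $G_B(D)$ contains no one-edge loops.

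The final step is to bound $e_B'(D) - v_B(D) + 1$ from below by $n$. Each $K_i$ is alternating with $|\beta'_{K_i}| \geq 1$, so by Stoimenow's formula applied to an alternating diagram of $K_i$, the reduced state graph $G_B'(D(K_i))$ carries at least one independent cycle. I would argue that blackboard doubling, together with the connect sum, produces in $G_B'(D)$ a subgraph built from the $G_B'(D(K_i))$'s joined by a few extra edges from the connect-sum arcs and the clasp, and therefore each $K_i$ contributes at least one independent cycle to $H_1(G_B'(D))$. The main obstacle is this last step: doubling generates many parallel edges that are identified when passing from $G_B(D)$ to $G_B'(D)$, so one must carefully track which cycles survive these identifications and verify that the $n$ cycles contributed by the summands $K_i$ remain linearly independent, and that neither the connect-sum gluings nor the clasp crossings introduce relations among them. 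Once this bookkeeping is in place, $|\beta'_L| \geq n$ follows immediately.
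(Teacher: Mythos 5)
Your upper bound is exactly the paper's argument: Clark's inequality $C(L)\leq 2g(L)+1$ together with the genus-one surface obtained from the annulus with a twisted band at the clasp, so $g(W_{-}(K))=1$ and $C(W_{-}(K))\leq 3$. No issues there.

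For the lower bound there is a genuine gap: the step you defer as ``bookkeeping'' --- verifying that the $n$ independent cycles coming from the summands $K_i$ survive both the blackboard doubling and the passage from $G_B(D)$ to the reduced graph $G_B'(D)$ --- is the entire content of the estimate, and you have not supplied an argument for it. Doubling turns every crossing into four, so $G_B(D^2)$ has many more edges and vertices than $G_B(D(K))$, and it is not obvious a priori that the reduced Betti number is unchanged rather than inflated or collapsed. The paper avoids this direct cycle-tracking in two ways. First, it never computes $|\beta'_K|$ from a state graph of the connected sum at all: it uses multiplicativity of the Jones polynomial under connected sum, $V(K)=\prod_i V(K_i)$, so that $\beta'_K=\sum_i \pm\beta'_{K_i}$, and then invokes Stoimenow's sign condition $\alpha'_{K_i}\beta'_{K_i}\leq 0$ (Lemma~\ref{betaprimeform}) to rule out cancellation, giving $|\beta'_K|=\sum_i|\beta'_{K_i}|\geq n$ cleanly. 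Second, for the effect of doubling it cites Dasbach--Lin's Lemma~2.5, which says precisely that $b(G_B'(D^2))=b(G_B'(D(K)))$ for a B-adequate diagram; the clasp then adds one vertex and two edges, so $|\beta'_{W_{-}(K)}|=|\beta'_K|+1\geq n$. If you want to keep your all-combinatorial route, you would need to either reprove the Dasbach--Lin cabling invariance or import it explicitly; as written, your proposal asserts the conclusion of that lemma without justification.
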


\begin{lemma}
\label{lem:Badequacy}
If a link is B-adequate then the negative Whitehead double of the link using the blackboard framing is also B-adequate. 
\end{lemma}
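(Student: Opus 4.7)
The plan is to build from a B-adequate diagram $D$ of the link $L$ an explicit diagram $D'$ of $W_{-}(L)$ and verify directly that $G_B(D')$ has no one-edge loops. Take $D'$ to be the blackboard-framed $2$-cable of $D$ together with the two crossings of the negative clasp. If $c(D)=n$, then $D'$ has $4n+2$ crossings, which split into $n$ groups of four ``cable crossings'' — one group around each original crossing of $D$ — together with the two negative clasp crossings.

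First I would describe the all-$B$ resolution $s_B(D')$ locally. Around each crossing $c$ of $D$, the four cable crossings B-resolve simultaneously and produce two parallel copies of the B-resolution arc at $c$, with a small bigon region trapped between them. At the negative clasp, the two negative crossings B-resolve in a specific way that links pairs of adjacent cable strands and contributes one or two new local state circles. Globally, the state circles of $s_B(D')$ consist of two parallel companion circles $C^+,C^-$ for each state circle $C$ of $s_B(D)$, modified near the clasp, plus a bounded number of extra small circles coming from the interiors of cable groups and from the clasp.

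Next I would classify the edges of $G_B(D')$ into two types: (i) edges coming from cable crossings, each of which is parallel to an edge of $G_B(D)$ between the corresponding companion circles, and (ii) edges coming from the two clasp crossings. For type (i), an edge between companions of circles $C$ and $C'$ of $s_B(D)$ is a loop in $G_B(D')$ only if the companion circles coincide, i.e.\ only if the corresponding edge of $G_B(D)$ is a loop, contradicting the B-adequacy of $D$. For type (ii), I would inspect the local picture at the negative clasp and verify directly that each of the two B-resolution arcs has endpoints on distinct circles of $s_B(D')$.

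The main obstacle is this last case analysis, and it is where the negative sign of the clasp becomes essential. A priori the companion circles $C^+,C^-$ of a given state circle $C\in s_B(D)$ could become identified in $s_B(D')$ through a chain of cable groups passing through the clasp, which would then turn a harmless edge of type (i) into a self-loop. The point of the negative clasp is that its B-resolution interleaves the companion circles in exactly the way needed to keep $C^+$ and $C^-$ distinct for every $C$, while the positive clasp would produce the opposite interleaving and destroy B-adequacy. Once this local verification at the clasp is carried out and combined with the type (i) argument above, B-adequacy of $D'$ follows.
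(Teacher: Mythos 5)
Your strategy is the same as the paper's: first show that the blackboard $2$-cable $D^{2}$ is B-adequate via the companion-circle argument (this is the cabling result of Lickorish that the paper cites), then check that inserting the clasp does not create a one-edge loop. You have also correctly isolated the genuine danger, which the paper passes over rather quickly: at each crossing of $D$, two of the four cable crossings give edges of the state graph joining the two parallel companions $C^{+},C^{-}$ of a \emph{single} state circle $C$ of the all-$B$ state of $D$, and these edges fail to be loops only because $C^{+}$ and $C^{-}$ cobound an annulus and hence are distinct circles. A clasp that reconnected the four cable ends in the ``turn-back'' pattern would fuse $C^{+}$ with $C^{-}$ and turn every such edge along that circle into a loop; recognizing this as the place where the sign of the clasp enters is exactly right, and it is a point worth making explicit.

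The gap is that the one step carrying the content of the lemma --- the local all-$B$ resolution of the two negative clasp crossings --- is announced (``I would inspect the local picture and verify directly'') but never performed, and nothing earlier in your argument uses the hypothesis that the clasp is negative. To close it you must actually exhibit that resolution: for the negative clasp the two B-smoothings reconnect the four cable ends in the parallel pattern (each strand continues into its own companion) and trap one new small state circle between them, so the all-$B$ state graph of $W_{-}(D)$ is that of $D^{2}$ with one extra vertex and two extra edges, each joining the new vertex to an adjacent companion circle. In particular neither clasp edge is a loop and $C^{+}$, $C^{-}$ remain distinct, which is precisely what the paper records in Figure~\ref{posclasp}. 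With that local picture in hand, your type~(i)/type~(ii) analysis does complete the proof; without it, the argument is a correct plan rather than a proof, since the only step distinguishing the negative from the positive clasp is the one left unverified.
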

A similar statement was proven in \cite{Baker} as Proposition 7.1. They show it for the untwisted negative Whitehead double of a knot with non-negative writhe. The writhe of the knot introduces extra twists into the diagram of the untwisted Whitehead double, which can interfere with adequacy around the clasp. 
\begin{proof}
We start by showing that the blackboard $2$-cabling will be B-adequate. This is shown by Lickorish in ~\cite{Lickorish} for $n$-cablings. Let $D$ be a B-adequate diagram for our link and $D^2$ the 2 cabling. Notice in $D^2$ there will be four copies of each crossing in $D$.  Then when we have the all B-resolution state we will end up with four parallel strands instead of two as we did in $D$. If we were to have a one edge loop, then two of the strands are part of the same state circle. But these state circles are copies of the state circles for $D$ so this would contradict that $D$ is adequate. \\
\indent
Now we want to look at the negative Whitehead double of $D$ using the blackboard framing. If we let the Whitehead double be $W_{-}(D)$ we will see that $G'(W_{-}(D))$ will be the same as $G'(D^2)$ but with an additional vertex and 2 new edges as we see in Figure~\ref{posclasp}. As resolving the clasp does not create a one edge loop we see that $W_{-}(D)$ is B-adequate.
\begin{figure}[h]
    
    \center
    \includegraphics[width=9cm]{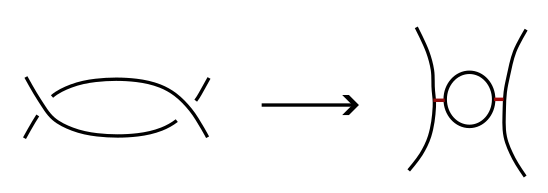}
    \caption{The result of the B-resolution on the clasp of a negative Whitehead double.}
    \label{posclasp}
    
\end{figure}
\end{proof}
Here we remind the reader that $G'_B(D(L))$ for a link diagram $D(L)$ is the reduced all B-state graph. We continue with the following lemma:
\begin{lemma}
\label{betticomp}
If $W_{-}(D(L))$ is the negative Whitehead double of a B-adequate link diagram $D(L)$ using the blackboard framing, and $b(G)$ the first Betti number for a graph, then $b(G'_B(W_{-}(D(L)))) = b(G'_B(D(L))) + 1$. 
\end{lemma}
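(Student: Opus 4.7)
The plan is to build on the structural description of $G'_B(W_-(D(L)))$ already established in the proof of Lemma~\ref{lem:Badequacy}. There it was observed that $G'_B(W_-(D(L)))$ is obtained from the reduced all-B state graph $G'_B(D^2)$ of the blackboard $2$-cable by adjoining one new vertex $v$ (the state circle enclosed by the B-resolved clasp, shown in Figure~\ref{posclasp}) together with two new edges incident to $v$, one for each clasp crossing. The proof then naturally splits into two independent steps.

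First, I would verify that passing from $G'_B(D^2)$ to $G'_B(W_-(D(L)))$ increases the first Betti number by exactly one. Using $b(G)=e(G)-v(G)+c(G)$, adjoining one vertex and two edges both incident to that new vertex and landing in a single connected component of $G'_B(D^2)$ leaves $c$ unchanged, raises $v$ by $1$, and raises $e$ by $2$, so $b$ jumps by $+1$. Inspection of Figure~\ref{posclasp} confirms that the two clasp-edges attach to vertices lying in a common component of $G'_B(D^2)$, and (since they land on distinct vertices there) do not themselves become parallel in the reduced graph.

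Second, it remains to show $b(G'_B(D^2))=b(G'_B(D(L)))$. For this I would argue by a local analysis at each crossing $c$ of $D(L)$: the blackboard $2$-cable replaces $c$ with a $2\times 2$ block of four crossings of the same sign as $c$, and the two parallel strands of the cable produce pairs of parallel state circles away from crossings. Performing all-B resolutions and then identifying the parallel edges arising both within each $2\times 2$ block and along the parallel-strand portions, I would verify that the net contribution to $e'-v'$ of each crossing of $D(L)$ is the same in $G'_B(D^2)$ as in $G'_B(D(L))$. Summing over crossings yields the claimed equality, and combining with the first step gives the lemma.

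The main obstacle is the crossing-by-crossing bookkeeping in the second step: I must track exactly which edges of $G_B(D^2)$ become identified in $G'_B(D^2)$ and verify that the apparent doubling of state circles is perfectly offset by the collapse of parallel edges. The blackboard framing hypothesis is essential here, since it avoids spurious local twists that would alter the $2\times 2$ block; the B-adequacy of $D(L)$ ensures no one-edge loops appear to complicate the parallel-edge count. With these hypotheses in hand, the local Euler characteristic count goes through uniformly at every crossing, regardless of sign.
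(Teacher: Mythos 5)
Your decomposition is exactly the one the paper uses: first show the blackboard $2$-cable $D^2$ satisfies $b(G'_B(D^2))=b(G'_B(D(L)))$, then show the clasp adds one vertex and two (non-parallel) edges to the reduced graph, raising $b$ by one. Your treatment of the clasp step matches the paper's and is correct, including the check that the two clasp edges land on distinct vertices of a single component so that neither the component count changes nor the two new edges get identified. The difference is in the cable step: the paper simply cites Lemma 2.5 of Dasbach--Lin, whereas you propose to reprove it by a crossing-by-crossing Euler-characteristic count. That route can be made to work, but be aware that your sketch of the all-$B$ state of $D^2$ is incomplete: the state circles of $D^2$ are not just parallel doubles of those of $D(L)$ --- each $2\times 2$ crossing block also produces one extra small ``nested'' state circle, so $v_B(D^2)=2v_B(D(L))+c$ rather than $2v_B(D(L))$, while $e_B(D^2)=4c$. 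The bookkeeping you defer must show that these $c$ extra vertices, the doubling of the original vertices, and the surviving edges after identification all cancel in $e'-v'$; that cancellation is precisely the content of the cited lemma, so you should either carry out this count explicitly (it is where the whole difficulty of the statement lives) or invoke Dasbach--Lin as the paper does.
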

\begin{proof}
Dasbach and Lin~\cite{HeadTail} showed in Lemma 2.5 that if $D^2$ is the two cabling of a B-adequate link diagram then $b(G_B'(D^2)) = b(G_B'(D(L)))$. For a graph $G$, $b(G) = e - v + 1$, where $e$ is the number of edges and $v$ the number of vertices. In the reduced graph when we take the two cabling every parallel copy of a state circle will also produce a new edge. Hence, the change in $v$ and $e$ will be the same between $G_B'(D^2)$ and $G_B'(D(L))$. Then when we move to $W_{-}(D(L))$ the clasp will add 2 edges and 1 vertex as we see in figure~\ref{posclasp}. Then we see that $b(G_B'(W_{-}(D(L)))) = b(G_B'(D^2)) + 1 = b(G_B'(D(L))) + 1$.
\end{proof}
 
The two previous lemmas allow us to see that the blackboard framing of the negative Whitehead double of an alternating link will be B-adequate. Also, we have a formula for the Betti number of the Whitehead double in relation to the first Betti number of the original link. The only remaining piece of the puzzle is to get from the first Betti number of the reduced B state graph to the second to last coefficient of the Jones polynomial. This comes from the following result  proven by Stoimenow in Proposition 3.1 of ~\cite{Stoimenow}.

\begin{lemma}
\label{betaprimeform}
If $D(L)$ is a B-adequate, connected diagram for a link, then in the representation of the Jones Polynomial, $V(D(L))$, we have $\alpha'_{D(L)} = \pm 1$, $\alpha_{D(L)}' \beta_{D(L)}' \leq 0$, and
$$|\beta_{D(L)}'| = e' - v' + 1 = b(G'_B(D(L))),$$
where $G_B'$ is the reduced all B state graph and $e'$ and $v'$ are the number of edges and vertices of the graph $G'_B(D(L))$, respectively. 
\end{lemma}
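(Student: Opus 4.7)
The natural approach is through the Kauffman bracket state sum
$\langle D \rangle = \sum_{S} A^{a(S) - b(S)} (-A^2 - A^{-2})^{|S|-1}$,
where the sum runs over Kauffman states $S$ of the diagram $D = D(L)$, with $a(S)$ and $b(S)$ the number of A- and B-resolutions and $|S|$ the number of state circles. Since $V_L(t) = (-A)^{-3w(D)}\langle D \rangle$ with $t$ a fixed rational power of $A$, the coefficients $\alpha'_L$ and $\beta'_L$ correspond to the two extreme $A$-power coefficients of $\langle D \rangle$ on the B-adequate side.

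First I would establish $|\alpha'_L| = 1$. The all-B state contributes at the extreme $A$-power $A^{-c - 2|s_B| + 2}$ (where $c$ is the number of crossings and $|s_B|$ the number of all-B state circles) with coefficient $(-1)^{|s_B|-1}$. B-adequacy, i.e.\ the absence of one-edge loops in $G_B$, guarantees that switching any single crossing from B to A strictly decreases $|S|$ by one; iterating shows that every state other than the all-B state contributes strictly inside the extreme power. Hence the all-B state is the unique contributor, so $|\alpha'_L| = 1$.

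For $\beta'_L$, the contributions to the second-extreme power $A^{-c-2|s_B|+6}$ come from three sources: (i) the all-B state, via the next term of $(-A^2 - A^{-2})^{|s_B|-1}$, contributing $(-1)^{|s_B|-1}(|s_B|-1)$; (ii) each state with exactly one A-resolution (there are $c$ of them), contributing $(-1)^{|s_B|-2}$ because by B-adequacy $|S| = |s_B|-1$; and (iii) states with two or more A-resolutions located on a single class of parallel edges in $G_B$, in which a second switch can restore $|S|$ to $|s_B|$ and thus contribute at the same power. A telescoping argument, carried out locally in each twist region, shows that the signed joint contributions of a parallel class of $k$ edges collapse to the contribution of a single edge, effectively replacing $c$ in (ii) by $e'$, the number of edges of the reduced graph $G_B'$. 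Summing, the second-extreme coefficient becomes $(-1)^{|s_B|}(e' - (|s_B|-1)) = (-1)^{|s_B|}(e' - v' + 1)$, which has opposite sign to $(-1)^{|s_B|-1} = \alpha'_L$, giving both $|\beta'_L| = e' - v' + 1 = b(G'_B(D(L)))$ and $\alpha'_L \beta'_L \leq 0$.

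The main obstacle is step (iii): identifying exactly which states with multiple A-resolutions still reach the second-extreme power, and proving that their alternating signed contributions combine to cancel precisely the intra-twist-region redundancy that distinguishes $G_B$ from $G_B'$. This requires a careful local combinatorial bookkeeping of state circles inside twist regions, which is the technical heart of Stoimenow's argument in \cite{Stoimenow}.
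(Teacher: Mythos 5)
The paper does not actually prove this lemma: it is imported verbatim as Proposition 3.1 of Stoimenow's paper (via the citation \cite{Stoimenow}), so there is no in-paper argument to compare yours against. Judged on its own terms, your proposal correctly identifies the standard route -- the Kauffman bracket state sum, the extremality of the all-$B$ state under $B$-adequacy giving $|\alpha'_{D(L)}| = 1$, and the classification of states contributing to the penultimate power -- and the parts you carry out explicitly (the degree bookkeeping showing a state with $k$ $A$-resolutions has minimal $A$-degree $2k - c - 2|S| + 2$, hence contributes to the second-extreme power only when $|S| = |s_B| + k - 2$; the sign comparison yielding $\alpha'\beta' \leq 0$) are sound.

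However, there is a genuine gap, and you have flagged it yourself: step (iii) is asserted, not proved. The entire content of the lemma beyond $|\alpha'| = 1$ is the claim that the signed contributions of states with $k \geq 2$ $A$-resolutions supported on a parallel class of $j$ edges of $G_B$ combine with the $j$ single-switch states of that class to contribute exactly $(-1)^{|s_B|}\cdot 1$, i.e.\ as a single edge of $G'_B$. This requires (a) showing that a state with $k \geq 2$ $A$-resolutions reaches the second-extreme degree only if all switched crossings lie in one parallel class (for $k = 2$ your argument essentially works, since after the first merge only a parallel crossing has both arcs on the same circle; for $k \geq 3$ the circle-counting depends on the cyclic order of the parallel crossings along the two state circles and must be tracked), and (b) evaluating the resulting alternating sum and verifying it telescopes to $1$ rather than to $0$ or to something depending on $j$. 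Without (a) and (b) the identity $|\beta'| = e' - v' + 1$ is not established. Since the paper simply cites Stoimenow for this, the honest options are either to cite the result as the paper does, or to supply the local combinatorics of (a) and (b) in full; as written, your proposal does neither.
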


We now have the tools to prove Theorem~\ref{lowerboundgen}. But first we show a more specific example of a family which satisfies Theorem~\ref{Thm:nongen} part (\textit{b}). 
%begin with K_m the connect sum of m trefoils 
\begin{prop}
\label{lem:trefoil}
Let $W_{-}(K_m)$ be the negative Whitehead double using the blackboard framing of the connect sum of $m$ trefoils as in Figure~\ref{fig:trefdoub}. Then for all $m$, $C(W_{-}(K_m)) \leq 3$ and $T_{W_{-}(K_m)}$ grows with $m$. Therefore, $T_{W_{-}(K_m)}$ can be made arbitrarily large across the family of knots.

\begin{figure}[h]
    \center
    \includegraphics[width=9cm]{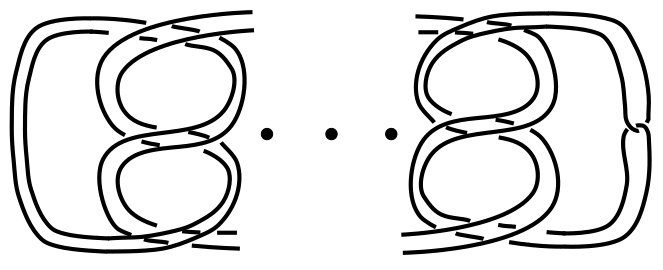}
    \caption{The negative Whitehead double of the connect sum of $m$ trefoil knots.}
    \label{fig:trefdoub}
\end{figure}
\end{prop}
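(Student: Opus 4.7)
The proposition has two claims that I would establish separately, and both rely heavily on the infrastructure already built up in this subsection.

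For the upper bound $C(W_{-}(K_m)) \le 3$, my plan is to invoke Clark's inequality $C(L) \le 2g(L) + 1$, which was already recalled in the introduction. Every Whitehead double, positive or negative, bounds an obvious once-punctured torus coming from the clasped annulus in the pattern solid torus capped off by the pattern disk; hence $g(W_{-}(K_m)) \le 1$, and therefore $C(W_{-}(K_m)) \le 3$ for every $m$, independently of the base knot. This part requires no new work.

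The content of the proposition is really the second assertion, and for this I would chain together the three preparatory lemmas just proven. Since each trefoil is alternating and connected sums of alternating knots are alternating, $K_m$ admits an alternating---hence B-adequate---diagram, so by Lemma~\ref{lem:Badequacy} the diagram $W_{-}(K_m)$ (with the blackboard framing) is B-adequate. Applying Stoimenow's Lemma~\ref{betaprimeform} and then Lemma~\ref{betticomp} gives
$$|\beta'_{W_{-}(K_m)}| \;=\; b(G'_B(W_{-}(K_m))) \;=\; b(G'_B(K_m)) + 1 \;=\; |\beta'_{K_m}| + 1,$$
so the task reduces to showing $|\beta'_{K_m}| \to \infty$. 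Since the Jones polynomial is multiplicative under connected sum, $V(K_m) = V(T)^m$, and using the trefoil's Jones polynomial $V(T) = -t^{-4} + t^{-3} + t^{-1}$ (so $\alpha'_T = -1$ and $\beta'_T = 1$), a one-line induction on the product formula $\beta'_{K_1 \# K_2} = \alpha'_{K_1}\beta'_{K_2} + \beta'_{K_1}\alpha'_{K_2}$ yields $\alpha'_{K_m} = (-1)^m$ and $\beta'_{K_m} = (-1)^{m-1}m$. Hence $|\beta'_{W_{-}(K_m)}| = m+1$ and in particular $T_{W_{-}(K_m)} \ge |\beta'_{W_{-}(K_m)}| = m+1$, which is unbounded in $m$.

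The only real subtlety---and the reason the preceding lemmas are stated specifically for the \emph{negative} Whitehead double with blackboard framing---is ensuring that the clasp contributes exactly one new independent cycle to the reduced B-state graph, with no one-edge loop or extra parallel identifications, as illustrated in Figure~\ref{posclasp}. This compatibility check is precisely what Lemmas~\ref{lem:Badequacy} and \ref{betticomp} have already dispatched, so once the chirality of the trefoil is fixed so that $\beta'_T \neq 0$ (the opposite handedness collapses the computation and is why one cannot simply use any clasp), the remaining argument is the routine polynomial multiplication above.
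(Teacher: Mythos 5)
Your proof is correct, and the first half (Clark's bound $C \le 2g+1$ applied to the genus-one surface from the clasped annulus) is exactly the paper's argument. For the second half you take a genuinely different route: the paper computes $|\beta'_{W_-(K_m)}|$ by drawing the all-B state circles of the explicit diagram and counting edges and vertices of the reduced graph in Figure~\ref{fig:reducedtref} directly, i.e.\ $e'-v'+1 = (5m+3)-(4m+3)+1$, whereas you chain Lemma~\ref{lem:Badequacy}, Lemma~\ref{betticomp}, and Lemma~\ref{betaprimeform} to reduce to $|\beta'_{K_m}|$ and then evaluate that via multiplicativity of the Jones polynomial under connected sum and the product rule $\beta'_{K_1\# K_2}=\alpha'_{K_1}\beta'_{K_2}+\beta'_{K_1}\alpha'_{K_2}$. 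Your route is essentially the paper's proof of the more general Theorem~\ref{lowerboundgen} specialized to trefoils, so it makes this proposition a corollary of that argument rather than an independent worked example; what the paper's direct count buys is a concrete illustration of the state-graph machinery (and an explicit graph), while your version buys uniformity and avoids any reliance on the figure. Incidentally your bookkeeping gives $|\beta'_{W_-(K_m)}|=m+1$, which is what the edge/vertex count $(5m+3)-(4m+3)+1$ actually yields; the paper's displayed value of $m$ is a harmless arithmetic slip that does not affect the conclusion $T_{W_-(K_m)}\ge m$. One small imprecision on your side: the relevant chirality issue is not that the mirror trefoil has vanishing $T_L$ (it does not), but that the sign of the clasp and the handedness of the summands must be matched so that B-adequacy is preserved and the nonzero coefficient sits at the $\beta'$ end tracked by the B-state graph; your argument does respect this, so nothing breaks.
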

\begin{proof}
The first part of the lemma is a direct result of \cite{Clark} by Clark where he shows that $c(K) \leq 2g(K) + 1$ where $g(K)$ is the genus of the knot. For any Whitehead double we can find an oriented spanning surface with genus exactly one by taking the annulus with a double twisted band at the clasp. Then $C(W_{-}(K_m)) \leq 3$ as $g(W_{-}(K_m)) = 1$. 
\\
\indent
Now we will compute $\beta'_{W_{-}(K_m)}$ by finding $G_B'(W_{-}(K_m))$. By Lemma~\ref{betaprimeform} we only need to find the number of vertices and edges as $W_{-}(K_m)$ is B-adequate. By Lemma~\ref{betaprimeform} and the graph $G_B'(W_{-}(K_m))$ shown in Figure~\ref{fig:reducedtref} 
\begin{align}
    |\beta'_{W_{-}(K_m)}| &= e(G_B'(W_{-}(K_m))) - v(G_B'(W_{-}(K_m))) + 1\\
    &= (5m + 3) - (4m + 3) + 1 = m.
\end{align}
Hence, showing that $T_{W_{-}(K_m)} \geq m$ for all $k$, proving the claim.

\begin{figure}[h]
    \center
    \includegraphics[width=9cm]{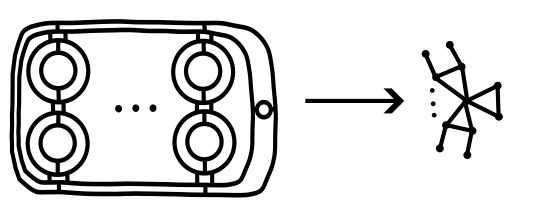}
    \caption{Left: The all B-state circle diagram. Right: The reduced state graph $G'_B(L)$. Notice the disjoint dots are not nodes for the graph but represent that we have $k$ copies of the subgraph on the left.}
    \label{fig:reducedtref}
\end{figure}
\end{proof}
Here we will introduce a more general family of knots for which Theorem~\ref{Thm:nongen} part (\textit{b}) holds true:
\begin{reptheorem}{lowerboundgen}
   Let $K_1,K_2,...,K_n$ be alternating knots such that $\beta'_{K_i} \neq 0$. Then let $K$ be the connect sum of $K_1,K_2,...,K_n$ such that $K$ is alternating, and let $W_{-}(K)$ the negative Whitehead double of $K$ using the blackboard framing. Then $C(W_{-}(K)) \leq 3$ and $|\beta'_{W_{-}(K)}| \geq n$.  
\end{reptheorem}

%Opposite sign of all the alphas
\begin{proof}
As in Lemma~\ref{lem:trefoil} for a Whitehead doubles such as $W_{-}(K)$, $C(W_{-}(K)) \leq 3$. Now we will work to compute $T_{W_{-}(K)}$. From ~\cite{Lickorish} we know that the Jones polynomial for $K$ will be the product of the Jones polynomials of the $K_i$. Then as all of the $K_i$ are alternating, $\alpha_{K_i}' = \pm 1$ so $\beta_K' = \sum_{i=1}^n \pm \beta_{K_i}'$. From Lemma~\ref{betaprimeform} we know that $\alpha_{K_i}' \beta_{K_i}' \leq 0$ which tells us that the signs of $\alpha_{K_i}'$ and $\beta_{K_i}'$ do not match. If we let $m$ be the number of the $\alpha'_i$ which are negative, then we see that $\beta_K' = \sum_{i=1}^n (-1)^{m\pm 1} |\beta_{K_i}'|$. Hence, in our sum the signs match so $|\beta_{K}'| = \sum_{i=1}^n |\beta_{K_i}'| $. By our hypothesis $|\beta_{K_i}'| > 0 $ for all $i$, hence $|\beta_{K}'| \geq n$.\\
\indent
 By Lemma~\ref{lem:Badequacy} we know that $W_{-}(K)$ will be B-adequate as $K$ is alternating and therefore B-adequate. Then by Lemma~\ref{betticomp} and Lemma~\ref{betaprimeform} we see that $|\beta_{W_{-}(K)}'| = |\beta_K'| + 1$ and as $|\beta_K'|$ is at least as large as the number of knots in the connect sum so is $|\beta_{W_{-}(K)}'|$ and further $T_{W_{-}(K)}$. Then $T_{W_{-}(K)}$ will grow with $n$ showing that it is unbounded across the family. 

\end{proof}
Lemma~\ref{lem:trefoil} and Theorem~\ref{lowerboundgen} both show that Theorem~\ref{Thm:nongen}(\textit{b}).

\section{Future Directions}
In Sections 2-4, we generalized the work from ~\cite{LEE} to bound the crosscap number of sums of strongly alternating tangles. Then in Section 5, introduced infinite families of knots for which their crosscap number and $T_L$ grow independently. Notice that the links we consider in Sections 2-4 are all hyperbolic, meanwhile those that we constructed in Section 5 are not hyperbolic. This leads to the following question:
\begin{ques}
    \label{ques:hyper}
    Does Theorem~\ref{thm:TLGen} generalize for all hyperbolic knots?
\end{ques}
A first step for question~\ref{ques:hyper} would be to relax the requirement that the individual tangles be strongly alternating. At the time of writing, this seems reasonable for the first step of our proof, but the uncertainty arises in moving from bounds in terms of the crosscap numbers of individual tangles to the twist number. In particular, alternating is a requirement for our usage of Theorem~\ref{lem:EfLee2}. Another potential way to move forward with this question would be to look at adequate links in general, which will be studied in future work. 

 % Choose Phys. Rev. style for bibliography

 \newpage

\bibliography{paper}        % qhe.bib is the name of our database

\end{document}